\title[Cohomology of the Morava stabilizer group at large primes]{The mod $p$ cohomology of the Morava stabilizer group at large primes}
\date{October 2024}
\author{Mohammad Behzad Kang}
\author{Andrew Salch}
\theoremstyle{plain}
\newtheorem{prop}{Proposition}[subsection]
\newtheorem{theorem}[prop]{Theorem}
\newtheorem{corollary}[prop]{Corollary}
\newtheorem{definition}[prop]{Definition}
\newtheorem{definition-proposition}[prop]{Definition-Proposition}
\newtheorem{lemma}[prop]{Lemma}
\newtheorem{question}[prop]{Question}
\newcounter{lettered}
\newtheorem{letteredtheorem}[lettered]{Theorem}
\theoremstyle{definition}
\newtheorem{conventions}[prop]{Conventions}
\newtheorem{remark}[prop]{Remark}
\newtheorem{example}[prop]{Example}
\newtheorem{observation}[prop]{Observation}
\DeclareMathOperator{\pr}{{\rm pr}}
\DeclareMathOperator{\id}{{\rm id}}
\DeclareMathOperator{\Spec}{{\rm Spec}}
\DeclareMathOperator{\strict}{{\rm str}}
\DeclareMathOperator{\Proj}{{\rm Proj}}
\DeclareMathOperator{\unr}{}
\DeclareMathOperator{\Ext}{{\rm Ext}}
\DeclareMathOperator{\Mod}{{\rm Mod}}
\DeclareMathOperator{\Conn}{{\rm Conn}}
\DeclareMathOperator{\Irr}{{\rm Irr}}
\DeclareMathOperator{\Def}{{\rm Def}}
\DeclareMathOperator{\Spf}{{\rm Spf}}
\DeclareMathOperator{\holim}{{\rm holim}}
\DeclareMathOperator{\Cotor}{{\rm Cotor}}
\DeclareMathOperator{\Iso}{{\rm Iso}}
\DeclareMathOperator{\core}{{\rm core}}
\DeclareMathOperator{\medial}{{\rm medial}}
\DeclareMathOperator{\et}{{\rm \acute{e}t}}
\DeclareMathOperator{\tensor}{\otimes}
\DeclareMathOperator{\Aut}{{\rm Aut}}
\DeclareMathOperator{\Gal}{{\rm Gal}}
\DeclareMathOperator{\ad}{{\rm ad}}
\DeclareMathOperator{\im}{{\rm im}}
\DeclareMathOperator{\coim}{{\rm coim}}
\DeclareMathOperator{\Tor}{{\rm Tor}}
\DeclareMathOperator{\extendedGn}{{\bf G}_n}
\newcommand{\C}{\mathbb{C}}
\newcommand{\floor}[1]{\lfloor #1 \rfloor}
\newcommand{\Z}{\mathbb{Z}}
\newcommand{\R}{\mathbb{R}}
  \newcommand{\interior}[1]{%
  {\kern0pt#1}^{\mathrm{o}}%
}
\begin{document}
\begin{abstract}
We calculate the cohomology of the extended Morava stabilizer group of height $n$, with trivial mod $p$ coefficients, for all heights $n$ and all primes $p>>n$. The result is an exterior algebra on $n$ generators. A brief sketch of the method: we introduce a family of deformations of Ravenel's Lie algebra model $L(n,n)$ for the Morava stabilizer group scheme. This yields a family of DGAs, parameterized over an affine line and smooth except at a single point. The singular fiber is the Chevalley-Eilenberg DGA of Ravenel's Lie algebra. Consequently the cohomology of the singular fiber is the cohomology of the Morava stabilizer group, at large primes. We prove a derived version of the invariant cycles theorem from Hodge theory, which allows us to compare the cohomology of the singular fiber to the fixed-points of the Picard-Lefschetz (monodromy) operator on the cohomology of a smooth fiber. Finally, we use some new methods for constructing small models for cohomology of reductive Lie algebras to show that the cohomology of the Picard-Lefschetz fixed-points on a smooth fiber agrees with the singular cohomology $H^*(U(n);\mathbb{F}_p)$ of the unitary group, which is the desired exterior algebra.
\end{abstract}
\maketitle
\tableofcontents

\section{Introduction}
\subsection{What this paper is about}
\label{What this paper is about}

In this paper we calculate the cohomology $H^*(\Aut(\mathbb{G}_{1/n});(\mathbb{F}_{p})_{triv})$ of the full Morava stabilizer group scheme $\Aut(\mathbb{G}_{1/n})$ of height $n$, with trivial mod $p$ coefficients, for all primes $p>>n$. Equivalently, we calculate the cohomology $H^*(\extendedGn;\mathbb{F}_{p^n})$ of the extended Morava stabilizer group $\extendedGn = \Aut(\mathbb{G}_{1/n}\otimes_{\mathbb{F}_p}\mathbb{F}_{p^n})\rtimes \Gal(\mathbb{F}_{p^n}/\mathbb{F}_p)$ of height $n$ for all $p>>n$, with $\Gal(\mathbb{F}_{p^n}/\mathbb{F}_p)$ acting on $\mathbb{F}_{p^n}$ via its natural action and with $\Aut(\mathbb{G}_{1/n})$ acting trivially on the Galois fixed-points $\mathbb{F}_p\subseteq \mathbb{F}_{p^n}$. 
In particular, we verify an old folklore conjecture that $H^*(\extendedGn;\mathbb{F}_{p^n})$ is isomorphic to the exterior $\mathbb{F}_p$-algebra $\Lambda(x_1, \dots ,x_{2n-1})$ on a single odd generator in each odd degree from $1$ to $2n-1$.
Here is the main theorem:
\begin{letteredtheorem}\label{letteredthm 1}
Let $n$ be a positive integer. Then, for all sufficiently large primes $p$, $H^*\left(\extendedGn; \mathbb{F}_{p^n}\right)$ is isomorphic, as a graded $\mathbb{F}_p$-algebra, to the associated graded of a finite filtration on the singular cohomology $H^*(U(n);\mathbb{F}_p)$ of the unitary group. In particular, $H^*\left(\extendedGn; \mathbb{F}_{p^n}\right)$ is isomorphic to $H^*(U(n);\mathbb{F}_p)$ as graded $\mathbb{F}_p$-vector spaces.
\end{letteredtheorem}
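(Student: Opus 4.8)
The plan is to translate the statement into a problem in Lie algebra cohomology and then resolve that problem by a degeneration argument modeled on Hodge theory. First I would invoke the theorem of Ravenel (resting on work of Morava) that for $p \gg n$ the continuous mod $p$ cohomology $H^*(\extendedGn; \F_{p^n})$ is computed by the Chevalley--Eilenberg DGA of a finite-dimensional $\F_p$-Lie algebra, namely Ravenel's model $L(n,n)$; after this reduction the group scheme plays no further role, and the task is to compute the Lie algebra cohomology $H^*(L(n,n); \F_p)$ together with its cup-product structure.

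Next I would construct a one-parameter deformation: a Lie algebra $\mathcal{L}$ flat over $\F_p[t]$ whose fiber at $t=0$ is $L(n,n)$ and whose fibers at $t \neq 0$ are a more tractable Lie algebra $L_t$ --- a twisted form of a reductive Lie algebra built from $\mathfrak{gl}_n$. Applying the Chevalley--Eilenberg functor yields a family of DGAs over the affine line $\mathbb{A}^1_{\F_p}$. The properties I would need to arrange are that the total object is smooth over $\F_p$ in the appropriate derived sense, and that the family has locally constant cohomology over the punctured line $\mathbb{A}^1 \setminus \{0\}$, so that $t=0$ is the unique singular fiber and its cohomology is precisely $H^*(L(n,n); \F_p) \cong H^*(\extendedGn; \F_{p^n})$.

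The core of the argument is a derived analogue of the \emph{invariant cycles theorem}. Classically, for a proper family over a disk smooth away from the central fiber, the restriction map from the cohomology of the central fiber to that of a nearby smooth fiber has image exactly the monodromy invariants. I would prove, by a nearby-cycles / spectral-sequence construction over $\F_p[t]$, an algebraic counterpart: the cohomology of the singular fiber carries a finite filtration whose associated graded maps isomorphically onto the fixed points $H^*(L_t; \F_p)^{T}$ of the Picard--Lefschetz operator $T$ on the cohomology of a smooth fiber. Replacing the Hodge-theoretic input by purely homological machinery here, and verifying the geometric hypotheses (smoothness of the total space, the local structure of the degeneration at $t=0$) that make such a theorem applicable, is the step I expect to be the main obstacle.

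Finally I would compute the monodromy invariants on a smooth fiber. For $p \gg n$ the Chevalley--Eilenberg cohomology of the reductive Lie algebra $L_t$ over $\F_p$ mirrors the characteristic-zero answer, an exterior algebra isomorphic to $H^*(U(n);\F_p) = \Lambda(x_1, x_3, \ldots, x_{2n-1})$; using the small models for cohomology of reductive Lie algebras developed in the paper, I would make the Picard--Lefschetz operator $T$ explicit enough to see that it acts with full invariants (indeed trivially) on this cohomology. Combining with the previous paragraph produces the finite filtration on $H^*(\extendedGn; \F_{p^n})$ with associated graded the $\F_p$-algebra $H^*(U(n);\F_p)$, and in particular the asserted isomorphism with $H^*(U(n);\F_p)$ of graded $\F_p$-vector spaces.
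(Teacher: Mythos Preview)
Your outline has the right architecture---reduce to Lie algebra cohomology, deform to a reductive Lie algebra, invoke an invariant-cycles mechanism---but there is a genuine gap in the first step that propagates through the rest.

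The reduction to $L(n,n)$ does \emph{not} identify $H^*(\extendedGn;\mathbb{F}_{p^n})$ with the full Lie algebra cohomology $H^*(L(n,n);\mathbb{F}_p)$. Ravenel's Lie model computes $H^*(\extendedGn;\mathbb{F}_{p^n}[u^{\pm 1}])/(1-u^{p^n-1})$, which is a $\mathbb{Z}/2(p^n-1)\mathbb{Z}$-graded ring; the trivial-coefficient piece $H^*(\extendedGn;\mathbb{F}_{p^n})$ corresponds only to the summand in internal degree divisible by $2(p^n-1)$, i.e.\ to the cohomology of the \emph{critical complex} $cc^\bullet(L(n,n))\subset CE^\bullet(L(n,n))$. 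This matters numerically: for $n=2,3,4$ one has $\dim_{\mathbb{F}_p}H^*(L(n,n))=12,\,152,\,3440$, whereas $\dim_{\mathbb{F}_p}H^*(U(n))=4,\,8,\,16$. So the assertion ``$H^*(L(n,n))$ has associated graded $H^*(U(n))$'' is false for every $n>1$, and your invariant-cycles statement as written (an isomorphism from the associated graded of $H^*(L(n,n))$ onto $H^*(\mathfrak{gl}_n)^T$) cannot hold.

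What the paper actually does is apply the derived invariant-cycles theorem at the \emph{cochain} level: it yields $H^*\big((\Lambda^\bullet_{n,0})^T\big)\cong H^*\big((\Lambda^\bullet_{n,sm})^T\big)$, where $(\Lambda^\bullet_{n,\epsilon})^T$ is the first-subscript complex $FSC^\bullet_{n,\epsilon}$, a sub-DGA strictly smaller than $CE^\bullet$. One then needs two further ingredients you did not mention: (i) the containment $cc^\bullet\subseteq FSC^\bullet$ for $p>2n^2$, which lets a spectral-sequence comparison transfer the collapse from $FSC^\bullet$ to $cc^\bullet$; and (ii) the ``small model'' theory of \S2, which builds a chain-homotopy retraction showing that $cc^\bullet(\mathfrak{gl}_n(\mathbb{F}_p(\omega)))\hookrightarrow CE^\bullet(\mathfrak{gl}_n(\mathbb{F}_p(\omega)))$ is a quasi-isomorphism, followed by a Galois-descent step. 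Your claim that $T$ acts trivially on $H^*(\mathfrak{gl}_n)$ is in fact true, but by itself it only recovers the surjection of Corollary~\ref{local inv cycles cor}, not the isomorphism you need; the isomorphism comes from working with $T$-fixed cochains and the additional comparisons above.
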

This calculation with {\em mod $p$} coefficients seems to be complementary to the calculation of the cohomology of $\extendedGn$ with {\em rational} coefficients in the preprint \cite{barthel2024rationalizationknlocalsphere} of Barthel--Schlank--Stapleton--Weinstein: as there is no known {\em single} integral lift defined simultaneously for all primes $p$, there is no way to deduce a mod $p$ result for all sufficiently large primes from a rational result, and conversely. So there seems to be no way to deduce our result from theirs, or theirs from ours. Our methods are also entirely different from those of \cite{barthel2024rationalizationknlocalsphere}.

For various rephrasings of Theorem \ref{letteredthm 1}, we refer readers to \cref{Review of Morava stabilizer groups}, which has a brief summary of basic properties of the different versions of the Morava stabilizer group, including the applications of their cohomology groups in spectral sequence calculations in stable homotopy theory. The known methods for computing the $K(n)$-local and $E(n)$-local stable homotopy groups of finite spectra---mostly notably, the $K(n)$-local and $E(n)$-local stable homotopy groups of spheres---all begin with knowledge of the cohomology ring \begin{align}\label{coh ring 1} H^*\left(\extendedGn; E(\mathbb{G}_{1/n}\otimes_{\mathbb{F}_p}\mathbb{F}_{p^n})_*/(p,u_1, \dots ,u_{n-1})\right) &\cong H^*\left(\extendedGn; \mathbb{F}_{p^n}[u^{\pm 1}]\right).\end{align}
Consequently it is of fundamental importance to calculate the ring \eqref{coh ring 1}. At primes $p>>n$, where the structure of the ring \eqref{coh ring 1} is independent of the prime $p$, the ring \eqref{coh ring 1} has been described only for $n\leq 4$: the cases $n=1,2$ are in \cite{MR0458423}, the case $n=3$ is handled in \cite{ravenel1977cohomology},\cite{MR1173994},\cite{MR4301320}, and the case $n=4$ is described additively (and multiplicatively, up to possible multiplicative extensions in a certain spectral sequence) in the preprint \cite{height4}. 

For all $n$, the ring \eqref{coh ring 1} is periodic: multiplying by $u^{p^n-1}$ is an isomorphism. Hence the ring \eqref{coh ring 1} consists of a certain smaller graded ring, $H^*\left(\extendedGn; \mathbb{F}_{p^n}[u^{\pm 1}]\right)/(1-u^{p^n-1})$, which is then repeatedly periodically. At primes $p>>n$, the $\mathbb{F}_p$-linear dimension of that smaller graded ring is as follows:
\begin{equation}
\begin{array}{llll}
\mbox{n}          & \dim_{\mathbb{F}_p}H^*\left(\extendedGn; \mathbb{F}_{p^n}[u^{\pm 1}]\right)/(1-u^{p^n-1})\\
1 & 2 \\
2 & 12 \\
3 & 152 \\
4 & 3440.
\end{array}\end{equation}
Clearly $H^*\left(\extendedGn; \mathbb{F}_{p^n}[u^{\pm 1}]\right)/(1-u^{p^n-1})$ is bigger than an exterior algebra on $n$ generators. What we calculate in this paper, and prove to be isomorphic to such an exterior algebra, is the subring 
\begin{align*}
 H^*\left(\extendedGn; \mathbb{F}_{p^n}[u^{\pm (p^n-1)}]\right)/(1-u^{p^n-1}) 
  &\cong H^*\left(\extendedGn; \mathbb{F}_{p^n}\{u^{0}\}\right) \\
  &\cong H^*\left(\extendedGn; \mathbb{F}_{p^n}\right) \\
\end{align*} 
of $H^*\left(\extendedGn; \mathbb{F}_{p^n}[u^{\pm 1}]\right)/(1-u^{p^n-1})$. 

A topological way to think about this subring is as follows: for all primes $p$ and positive integers $n$ such that the $p$-primary Smith-Toda complex $V(n-1)$ exists, the $E(n)$-Adams spectral sequence for $V(n-1)$ has signature
\begin{align}
\label{adams ss 120} E_2^{s,t} \cong H^s\left(\extendedGn; \mathbb{F}_{p^n}\{ u^{t/2}\} \right) & \Rightarrow \pi_{t-s}(L_{E(n)}V(n-1)),\end{align} 
with $E_2^{s,t}$ trivial for $t$ odd. The notation $\mathbb{F}_{p^n}\{ u^{t/2}\}$ denotes the $\mathbb{F}_{p^n}$-linear span of $u^{t/2}$ inside $\mathbb{F}_{p^n}[u^{\pm 1}]$. 
Plotting \eqref{adams ss 120} with the usual (Adams) convention, the subring $H^*\left(\extendedGn; \mathbb{F}_{p^n}\right)$ calculated in Theorem \ref{letteredthm 1} is precisely the diagonal line of slope $-1$ through bidegree $(0,0)$ in this spectral sequence. This same line is repeated periodically every $2(p^n-1)$ stems as one looks to the left or the right in the $E_2$-page.

We now sketch how Theorem \ref{letteredthm 1} is proven.

\subsection{The reduction to Lie algebra cohomology} 
\label{The reduction to Lie algebra cohomology}

The first step is to compare the cohomology of the Morava stabilizer group to the cohomology of a Lie algebra. This relies heavily on ideas of Ravenel \cite{MR0420619}, \cite{ravenel1977cohomology}, \cite[chapter 6]{MR860042}: for each prime $p$ and each integer $n$, Ravenel constructed an $n^2$-dimensional solvable Lie graded $\mathbb{F}_p$-algebra $L(n,n)$ with the property that, for $p>n+1$, there exists a spectral sequence
\begin{align}
\label{rmss} E_1^{*,*} \cong H^{*,*}(L(n,n);\mathbb{F}_p) &\Rightarrow H^*\left(\extendedGn; \mathbb{F}_{p^n}[u^{\pm 1}]\right)/(1-u^{p^n-1}). \end{align}
We refer to $L(n,n)$ as {\em Ravenel's Lie algebra model for the Morava stabilizer group.} 
In the preprint \cite{salch2023ravenels}, we called \eqref{rmss} the {\em Ravenel-May spectral sequence}. The main result of that preprint is that the Ravenel-May spectral sequence collapses immediately at all primes $p>>n$, and that there are furthermore no multiplicative filtration jumps in the abutment. 
Consequently, for $p>>n$, the Lie algebra cohomology $H^{*,*}(L(n,n);\mathbb{F}_p)$ is isomorphic, as a graded $\mathbb{F}_p$-algebra, to the cohomology $H^*\left(\extendedGn; \mathbb{F}_{p^n}[u^{\pm 1}]\right)/(1-u^{p^n-1})$. That is, at sufficiently large primes, Ravenel's Lie algebra model has the same cohomology, with mod $p$ coefficients, as the Morava stabilizer group. {\em To be clear, Theorem \ref{letteredthm 1} relies on the main result of the preprint \cite{salch2023ravenels}.} 

As a consequence, we may calculate the subring $H^*\left(\extendedGn; (\mathbb{F}_{p^n})_{triv}\right)$ by calculating the part of the Lie algebra cohomology $H^{*,*}(L(n,n);\mathbb{F}_p)$ in internal degrees\footnote{The cohomology of $L(n,n)$ is bigraded: the first grading is the cohomological grading, i.e., the grading in which the degree $i$ summand is the $i$th cohomology group. The other grading is the {\em internal grading}, induced by the grading on the Lie algebra $L(n,n)$ itself.} divisible by $2(p^n-1)$. The cohomology of the Lie algebra $L(n,n)$ is the cohomology of its Chevalley-Eilenberg \cite{MR0024908} differential graded algebra $CE^{\bullet}(L(n,n))$. 
Consequently our task is to show that the sub-DGA of $CE^{\bullet}(L(n,n))$ consisting of elements in internal degrees divisible by $2(p^n-1)$ has cohomology $\Lambda(x_1, x_3, \dots ,x_{2n-1})$. We call this sub-DGA of $CE^{\bullet}(L(n,n))$ the {\em critical complex of height $n$}, and we write $cc^{\bullet}(L(n,n))$ for it.

\subsection{Deformations of Ravenel's Lie algebra model for the Morava stabilizer group} 

One of the new constructions in this paper is a one-parameter family of deformations of the Lie algebra $L(n,n)$. In \cref{Deformations of Ravenel's model.}, we construct a family of Lie $k$-algebras, parameterized over the affine line $\mathbb{A}^1_{k}$, whose fiber at $0$ is Ravenel's {\em solvable} Lie model $L(n,n)$, and whose fiber at every nonzero point in $\mathbb{A}^1_k$ is instead a {\em reductive} Lie $k$-algebra. We take the fiberwise Chevalley-Eilenberg differential graded algebra of these Lie algebras, yielding a bundle $\Lambda^{\bullet}_n$ of differential graded $k$-algebras over $\mathbb{A}^1_{k} \backslash \{ 0\}$. 

Then we bring in the machinery of connections and monodromy: in Theorem \ref{main thm 1} we show that there is an essentially unique differential-graded multiplicative connection on this bundle of DGAs which is equivariant with respect to a certain natural $C_n$-action. By parallel transport in this connection around a loop circling the puncture at $0$ in $\mathbb{A}^1_{k} \backslash \{ 0\}$, we get a Picard-Lefschetz (monodromy) operator $T: H^*((\Lambda^{\bullet}_n)_{sm}) \rightarrow H^*((\Lambda^{\bullet}_n)_{sm})$ on the cohomology of any of the smooth fibers $(\Lambda^{\bullet}_n)_{sm}$ of $\Lambda^{\bullet}_n$. We wish to compare the monodromy fixed-points $H^*\left((\Lambda^{\bullet}_n)_{sm}\right)^T$ with the cohomology of the singular fiber $H^*\left((\Lambda^{\bullet}_n)_{0}\right)$. 

The situation resembles that of the local invariant cycles theorem, originally conjectured by Griffiths in \cite{MR0258824}, and proven by Clemens in \cite{MR0444662}; see \cite{MR0756848} for a very good expository account. The local invariant cycles theorem asserts the following. We write $D$ for the unit disk in the complex plane. Suppose we are given
\begin{itemize}
\item a proper flat holomorphic map $\mathcal{X}\rightarrow D$,
\item such that $\mathcal{X}$ is a K\"{a}hler manifold,
\item such that the fiber $\mathcal{X}_t$ over each nonzero $t\in D$ is a smooth complex variety,
\item and such that the fiber $\mathcal{X}_0$ over $0\in D$ is a divisor with normal crossings.
\end{itemize}
Then $H^*(\mathcal{X}_0;\mathbb{Q})\cong H^*(\mathcal{X};\mathbb{Q})$. 
Write $\mathcal{X}_{sm}$ for any smooth fiber, i.e., $\mathcal{X}_{sm} = \mathcal{X}_t$ for some fixed nonzero $t\in D$. 
Let $D^*$ denote the punctured disk $D \backslash \{0\}$.
Corresponding to a choice of generator of $\pi_1(D^*)$, we get a Picard-Lefschetz (monodromy) operator $T: H^*(\mathcal{X}_{sm};\mathbb{Q}) \rightarrow  H^*(\mathcal{X}_{sm};\mathbb{Q})$. 
The local invariant cycles theorem states that the composite map 
 \[ H^*(\mathcal{X}_0 ;\mathbb{Q})\stackrel{\cong}{\longrightarrow}H^*(\mathcal{X};\mathbb{Q}) \rightarrow H^*(\mathcal{X}_{sm};\mathbb{Q}) \]
lands in the fixed-points of $T$, and furthermore, the resulting map $H^*(\mathcal{X}_{0};\mathbb{Q}) \rightarrow  H^*(\mathcal{X}_{sm};\mathbb{Q})^T$ is surjective. A $p$-adic version of the theorem was proven by Deligne in \cite{MR0601520}.

In Theorem \ref{main local inv cycles thm} we prove a certain derived analogue of the local invariant cycles theorem. See that theorem, and Corollary \ref{local inv cycles cor}, for the statements, which are technical. The upshot\footnote{Compare this use of deformations to the standard (since \cite{MR4281382} and \cite{MR4574661}) uses of deformations in stable homotopy theory: it is commonplace to talk about a ``deformation of homotopy theories'' as being a family of homotopy theories containing some parameter $\tau$. One refers to the whole family as a deformation of the homotopy theory obtained by setting $\tau$ to zero. The standard example, due to \cite{MR4281382}, is in $\mathbb{C}$-motivic stable homotopy theory: the category of $p$-complete cellular $\mathbb{C}$-motivic spectra has a parameter $\tau$ such that setting $\tau$ to zero recovers Hovey's \cite{MR2066503} stable derived category of $BP_*BP$-comodules, while inverting $\tau$ recovers the classical category of $p$-complete spectra. The former is a special fibre, the latter is a generic fibre, and the two fit together over $\Spec$ of a discrete valuation ring, i.e., the Sierpinski topological space, with two points, one of which is generic.

In our setting, we have a much richer underlying space of our family of deformations. Instead of having only a generic point and a special point, we have a whole affine line, our parameterized family is smooth except at $0$, and the geometry of the resulting punctured affine line---in particular, the fact that we can wind around the singular fiber, yielding a nontrivial monodromy action of $\pi_1(\mathbb{A}^1 - \{ 0\})$---plays a critical role in our arguments. This is much closer to the way that deformations are used in geometry, e.g. in the study of variation of Hodge structure as in \cite{MR0258824}, than the way that deformations have recently been used and written about in stable homotopy theory. 

We remark that our parameterized family of DGAs indeed lifts to a parameterized family of homotopy theories: an alternative construction of our family of DGAs is by means of a certain family of Dieudonn\'{e} modules parameterized over $\mathbb{A}^1$, smooth except at $0$, whose singular fiber is formal and whose smooth fibers are \'{e}tale. One can take the Morava/Lubin-Tate $E$-theory of the whole family to get a meaningful parameterized family of homotopy theories, a family of deformations of the usual Morava $E_n$. The second author plans to return to this construction in later work, and is grateful to Anish Chedalavada for an insightful discussion about it.} is that, for our bundle of DGAs $\Lambda^{\bullet}_n$, 
\begin{itemize}
\item we get a surjective map $H^*\left((\Lambda^{\bullet}_n)_{0}\right) \twoheadrightarrow H^*\left((\Lambda^{\bullet}_n)_{sm}\right)^T$ just as in the classical local invariant cycles theorem,
\item but more importantly, we also get an isomorphism $H^*\left((\Lambda^{\bullet}_n)_{sm}\right)^T \cong H^*\left(((\Lambda^{\bullet}_n)_{0})^T\right)$. 
\end{itemize}
It is not obvious that $H^*\left(((\Lambda^{\bullet}_n)_{0}\right)^T$ ought to make sense at all, i.e., that there ought to be any meaningful version of the Picard-Lefschetz operator on the {\em singular} fiber. The origin of a natural such operator is explained in \cref{The monodromy fixed-points...}. It has the property that its fixed-point DGA $((\Lambda^{\bullet}_n)_{0})^T$ is a certain well-behaved sub-DGA of the Chevalley-Eilenberg complex $(\Lambda^{\bullet}_n)_0 = CE^{\bullet}(L(n,n))$ of Ravenel's Lie algebra model for the Morava stabilizer group. We call this sub-DGA the {\em first-subscript complex} of $L(n,n)$. 

In Theorem \ref{ss collapse thm 304} we show that, if $p>2n^2$, then the first-subscript complex has cohomology isomorphic to the cohomology of the critical complex. This uses the derived local invariant cycles theorem together with an additional spectral sequence argument. 

It takes several additional steps, sketched below in \cref{The main theorem}, to show that the cohomology of the critical complex is isomorphic to $\Lambda(x_1, x_3, \dots ,x_{2n-1})$. But as a noteworthy consequence of our approach, we get the cohomology of a {\em very large} sub-DGA of $CE^{\bullet}(L(n,n))$: the first-subscript complex is of $\mathbb{F}_p$-linear dimension approximately $1/n$ times the $\mathbb{F}_p$-linear dimension of $CE^{\bullet}(L(n,n))$ itself. In that sense, we have in fact calculated about ``$1/n$ of $H^*(\extendedGn;\mathbb{F}_{p^n}[u^{\pm 1}])$'' at large primes, i.e., it is not merely that we have calculated the cohomology in internal degrees congruent to $0$ modulo $2(p^n-1)$, but we have also shown that an enormous subcomplex of $CE^{\bullet}(L(n,n))$ in degrees {\em not} congruent to $0$ modulo $2(p^n-1)$ contributes {\em nothing} to the cohomology of the Morava stabilizer group.

For example, at height $n=5$, the critical complex in $CE^{\bullet}(L(n,n))$ is a modest $247,552$-dimensional vector space, comprising only about 0.7 percent of the full Chevalley-Eilenberg complex of $L(5,5)$, whose vector space dimension is $2^{(5^2)} = 33,554,432$. The height $5$ first-subscript complex is instead $6,710,912$-dimensional, which is just over one-fifth the size of the full Chevalley-Eilenberg complex of $L(5,5)$. We not only show that the cohomology of the critical complex is the exterior algebra $\Lambda(x_1,x_3,x_5,x_7,x_9)$; we also show that the cohomology of the much larger {\em first-subscript complex} is also $\Lambda(x_1,x_3,x_5,x_7,x_9)$. Further explanation and discussion of this point can be found in \cref{Cohomology of the singular...}.

\subsection{Outline of proof of the main theorem}
\label{The main theorem}

\begin{itemize}
\item Using results on Lie theory in positive characteristic, we know that the cohomology $H^*(U(n);k)$ of the compact Lie group $U(n)$ is isomorphic to the cohomology $H^*(\mathfrak{gl}_n(k);k)$ of the Lie algebra $\mathfrak{gl}_n(k)$, for any finite field $k$ of characteristic greater than $3n-3$. This result is not new (if $k$ were instead a field of characteristic zero, the result is extremely well-known), but we spell out a bit of detail in Proposition \ref{gln cohomology}.
\item Consequently $H^{*}(U(n);k)$ is isomorphic to the cohomology of the Chevalley-Eilenberg complex $\Lambda^{\bullet}_k(\mathfrak{gl}_n(k)^*)$ for any finite field $k$ of characteristic greater than $3n-3$. In \cref{Models for Lie algebra cohomology} we develop some theory for building {\em models} of Lie algebra cohomology. That is, given a Lie $k$-algebra $L$, one always has the Chevalley-Eilenberg differential graded $k$-algebra $CE^{\bullet}(L)$, but there may be also be a sub-DGA $A^{\bullet}$ of $CE^{\bullet}(L)$ which is quasi-isomorphic to $CE^{\bullet}(L)$. We then call $A^{\bullet}$ a {\em model} for the cohomology of $L$. 

Our main application of this theory, in Theorem \ref{main chain htpy application thm}, establishes that the Chevalley-Eilenberg complex $\Lambda^{\bullet}_{\mathbb{F}_p(\omega)}(\mathfrak{gl}_n(\mathbb{F}_p(\omega))^*)$ is quasi-isomorphic to the critical complex $cc^{\bullet}(\mathfrak{gl}_n(\mathbb{F}_p(\omega)))$, using a grading on $\mathfrak{gl}_n(\mathbb{F}_p(\omega))$ defined in Definition \ref{gradings def}. That is, the critical complex $cc^{\bullet}(\mathfrak{gl}_n(\mathbb{F}_p(\omega)))$ is a {\em model} for the cohomology of $\mathfrak{gl}_n(\mathbb{F}_p(\omega))$. Here $\omega$ is a primitive $n$th root of unity.
\item In Theorem \ref{gln and cc cohomology}, we give a descent argument to show that the isomorphism of rings $H^*(cc^{\bullet}(\mathfrak{gl}_n(\mathbb{F}_p(\omega)))) \cong H^{*}(U(n);\mathbb{F}_p(\omega))$ in fact comes from an isomorphism of rings $H^*(cc^{\bullet}(\mathfrak{gl}_n(\mathbb{F}_p))) \cong H^{*}(U(n);\mathbb{F}_p)$, as long as $p$ does not divide $n$.
\item We then use our deformed Ravenel model and the derived local invariant cycles theorem, to prove that, for all primes $p>2n^2$, the cohomology of the critical complex $cc^{\bullet}(\mathfrak{gl}_n(\mathbb{F}_p))$ is isomorphic to the cohomology of the critical complex $cc^{\bullet}(L(n,n))$. This requires the development of a substantial body of theory on deformations, connections, monodromy, and invariant cycles for DGAs, as described above in \cref{The reduction to Lie algebra cohomology}. This material occupies \cref{Deformations of Ravenel's model.}. The main result, the isomorphism $H^*(cc^{\bullet}(\mathfrak{gl}_n(\mathbb{F}_p)))\cong H^*(cc^{\bullet}(L(n,n)))$, is Corollary \ref{main cor 1}.
\item At this point, we have isomorphisms
\begin{align}
\label{iso aa 1}  H^*(U(n);\mathbb{F}_p) 
   &\cong H^*(\mathfrak{gl}_n(\mathbb{F}_p);\mathbb{F}_p) \\
\label{iso aa 2}   &\cong H^*(cc^{\bullet}(\mathfrak{gl}_n(\mathbb{F}_p))) \\
\label{iso aa 3}   &\cong H^*(cc^{\bullet}(L(n,n)) \\
\label{iso aa 4}   &\cong H^{*,2(p^n-1)\cdot *}(L(n,n);\mathbb{F}_p)
\end{align}
for all primes $p>2n^2$. For readers who are uncomfortable with results that depend on a preprint, we point out that the isomorphisms \eqref{iso aa 1} through \eqref{iso aa 4} do {\em not} rely on the results of \cite{salch2023ravenels}.
\end{itemize}
Now we invoke the main result of the preprint \cite{salch2023ravenels}, which shows that \eqref{iso aa 4} is isomorphic to $H^*(\extendedGn;\mathbb{F}_{p^n})$ for $p>>n$, due to collapse of the Ravenel-May spectral sequence \eqref{rmss}, but does not yield any specific bound on the primes $p$ for which this isomorphism exists. Our use of this result from \cite{salch2023ravenels} is the only reason that Theorem \ref{letteredthm 1} is stated only to hold for $p>>n$, rather than for all $p>2n^2$. If the Ravenel-May spectral sequence can be shown to collapse for any particular prime $p$ greater than $2n^2$, then due to isomorphisms \eqref{iso aa 1} through \eqref{iso aa 4}, we will have a proof of Theorem \ref{letteredthm 1} for {\em that particular prime} $p$.

\subsection{Acknowledgments and conventions}

In 2018, Agnes Beaudry and Doug Ravenel talked with us about the question of the cohomology of the extended Morava stabilizer group with trivial coefficients. Beaudry and Ravenel had put some thought into this problem, but they graciously left off with their investigations so as to avoid us stepping on each other's toes. We are grateful to them for this, as well as for their discussions with us and their interest and support.

Pavel Etingof, Julia Pevtsova, and Dan Nakano generously helped us in a search for references which answer the following question: given a classical semisimple Lie algebra $\mathfrak{g}$ over a field of characteristic $p$, how large much $p$ be in order that the cohomology of $\mathfrak{g}$ is exterior on generators whose degrees are given in terms of the exponents of $\mathfrak{g}$, as it is in characteristic zero? We are grateful to them for their help. Dan Nakano showed us the paper \cite{MR0821318} of Friedlander--Parshall, which is strong enough for our purposes in this paper. The relevant result from Friedlander--Parshall is recalled below, as Theorem \ref{sln cohomology}. 

We thank Dan Frohardt for coming up with an ingenious combinatorial argument to prove an asymptotic dimension estimate which we had conjectured; see the footnote at the end of \cref{Cohomology of the singular...}.

By 2020, the first half of this paper was finished, but we did not yet know how to prove that the critical complex in the singular fiber of our parameterized family of DGAs had the same cohomology as the critical complex in a smooth fiber. Around that time, Wayne Raskind was the dean of the College of Liberal Arts and Sciences at our institution, Wayne State University. In conversation, Wayne remarked that this situation is quite similar to that of the local invariant cycles theorem. We then had to go learn about the local invariant cycles theorem, but after a few months of work, we had the proof we needed. An excellent conversation to have with one's dean! We thank Wayne for this important suggestion which turned out to be of critical importance for this project.

\begin{conventions}\label{conventions}\leavevmode
\begin{itemize}
\item The symbol $\floor{x}$ denotes the integer floor of $x$.
\item
We will often use symbols involving subscripts which are defined modulo $n$. In order to simplify various formulas, it will be convenient to use the representatives $1, \dots ,n$ for the residue classes modulo $n$, rather than the representatives $0, \dots ,n-1$. We will write $Z_n$ for the set $\{ 1 , \dots ,n\}$. 
\item Given a commutative ring $R$, we will write $R^u$ for the subset $\{x: x^n = 0\mbox{\ for\ some\ } n\} \cup \{ x: x^n = 1\mbox{\ for\ some\ } n\}$ of $R$, i.e., $R^u$ is the union of the nilradical of $R$ and the set of roots of unity in $R$.
\item When $A$ is a graded $R$-algebra and $M$ a graded $A$-module, we will sometimes need to refer to:
\begin{enumerate}
\item graded $R$-linear derivations $d: A\rightarrow M$, that is, $R$-linear maps such that $d(ab) = (da)b + (-1)^{\left|a\right|}a(db)$ for all homogeneous $a,b\in A$,
\item ungraded $R$-linear derivations $d: A\rightarrow M$, that is, $R$-linear maps such that $d(ab) = (da)b + a(db)$ for all $a,b\in A$,
\item and ungraded $R$-linear derivations $d: A\rightarrow M$ which preserve degree, that is, grading-preserving $R$-linear maps such that $d(ab) = (da)b + a(db)$ for all $a,b\in A$.
\end{enumerate}
 We will always refer to the first as {\em graded $R$-linear derivations}, to the second as simply {\em $R$-linear derivations,} and to the third as {\em grading-preserving $R$-linear derivations.}
\item Given a commutative ring $R$ and an $R$-module $M$, we write $\Lambda_R(M)$ for the exterior/Grassmann $R$-algebra of $M$. When $M$ is the $R$-linear dual of a Lie algebra, then the exterior algebra of $M$ inherits a differential from the Lie bracket. We write $\Lambda^{\bullet}_R(M)$ for the resulting differential graded $R$-algebra, i.e., the Chevalley-Eilenberg complex \cite{MR0024908} of the Lie algebra.
\item All differential graded algebras in this paper will be graded so that the differential increases degree by $1$.
\item All graded Lie algebras in this paper have bracket of degree zero, and they furthermore satisfy the alternating and Jacobi identities, {\em not} their Koszul versions which differ from the classical identities by a sign. Explicitly, given an abelian group $A$, when we say that $\mathfrak{g}$ is an $A$-graded Lie $k$-algebra, we will mean that $\mathfrak{g}$ is a Lie $k$-algebra which is equipped with a $k$-module splitting $\mathfrak{g} = \bigoplus_{a\in A}\mathfrak{g}_a$ such that the Lie bracket of an element in degree $i$ and an element in degree $j$ lies in degree $i+j$.
\item In this paper, all formal group laws are implicitly understood to be one-dimensional.
\item Given a prime $p$ and a positive integer $n$, we write $\mathbb{G}_{1/n}$ for the $p$-typical formal group law over $\mathbb{F}_p$ classified by the ring map $BP_*\rightarrow \mathbb{F}_p$ sending the Hazewinkel generator $v_n\in BP_*$ to $1\in\mathbb{F}_p$ and sending the other Hazewinkel generators $v_i\in BP_*, i\neq n$, to zero.
\item In \cref{Monodromy section}, we will often refer to $\mathbb{A}^1_R$ and $\mathbb{A}^1_R - \{0\}$, where $R$ is a commutative ring. Occasionally we will mean $\mathbb{A}^1_R$ or $\mathbb{A}^1_R - \{0\}$ regarded as a scheme; usually we will instead mean the set of closed points in $\mathbb{A}^1_R$ or $\mathbb{A}^1_R - \{0\}$. We will rely on context to make it clear which is meant. 
\end{itemize}
\end{conventions}

\section{Models for Lie algebra cohomology}
\label{Models for Lie algebra cohomology}

\subsection{Derivations and chain homotopy retractions on Chevalley-Eilenberg DGAs}

In this section, we develop some tools for constructing cochain subcomplexes $C^{\bullet}$ of a differential graded algebra $A^{\bullet}$ such that the inclusion map $C^{\bullet}\hookrightarrow A^{\bullet}$ is a quasi-isomorphism. 
Our main applications are to the case where $A^{\bullet}$ is the Chevalley-Eilenberg complex $\Lambda^{\bullet}(L^*)$ of a Lie algebra $L$. The resulting cochain subcomplexes $C^{\bullet}$ of $\Lambda^{\bullet}(L^*)$ then have the property that $H^*(C^{\bullet})$ is isomorphic to the cohomology $H^*(L;k)$ of the Lie algebra $L$. In that sense, $C^{\bullet}$ is a ``model'' for the cohomology of $L$. 

Our ``models'' for the cohomology of Lie algebras are not minimal models in the sense of Sullivan \cite{MR0646078}, but we suspect that our models could be understood as a case of some generalization or adaptation of Sullivan's theory. We have not pursued that idea, however.

\subsection{Some elementary linear algebra}
\label{elementary lin alg}

This subsection collects some elementary linear-algebraic definitions and lemmas that will be used in the proof of Theorem \ref{thm on diffs giving chain htpies}. We leave off most proofs in this subsection.

\begin{definition}\label{def of circledast}
Let $k$ be a field, let $V_1,V_2$ be finite-dimensional $k$-vector spaces, and let $D_1: V_1\rightarrow V_1$ and $D_2: V_2\rightarrow V_2$ be $k$-linear transformations. We will write $D_1\circledast D_2$ for the $k$-linear transformation $V_1\otimes_k V_2 \rightarrow V_1\otimes_k V_2$ given by the formula
\[ \left( D_1\circledast D_2\right)\left( v_1\otimes v_2\right) = (D_1(v_1))\otimes v_2 + v_1\otimes (D_2(v_2)).\]
\end{definition}
\begin{remark}\label{hopf alg remark}
By choosing ordered bases for $V_1$ and $V_2$, we can express $\circledast$ as an operation on square matrices. For example,
given ordered bases $v_1,w_1$ and $v_2,w_2$ for $V_1$ and $V_2$, respectively, we have
\[\left[\begin{array}{ll} a & b \\ c & d \end{array} \right] \circledast \left[\begin{array}{ll} e & f \\ g & h \end{array} \right]
  = \left[ \begin{array}{llll} a+e & f & b & 0 \\ g & a+h & 0 & b \\ c & 0 & d+e & f \\ 0 & c & g & d+h \end{array} \right],\]
if we choose the ordered basis $(v_1\otimes v_2,v_1\otimes w_2,w_1\otimes v_2,w_1\otimes w_2)$ for $V_1\otimes_k V_2$.
Of course $\circledast$ is not equal to, or (usually) conjugate to, the tensor/Kronecker product $\otimes$ of matrices. 

A more structural perspective: the data of a $k$-vector space equipped with a $k$-linear endomorphism is equivalent to the data of a $k[x]$-module. The tensor/Kronecker product $\otimes$ of representations corresponds to the symmetric monoidal product on the category of $k[x]$-modules arising from the coproduct $k[x]\rightarrow k[x]\otimes_k k[x]$ sending $x$ to $x\otimes x$. Similarly, the product $\circledast$ of representations corresponds to the symmetric monoidal product on the category of $k[x]$-modules arising from the coproduct $k[x]\rightarrow k[x]\otimes_k k[x]$ sending $x$ to $x\otimes 1 + 1\otimes x$. From this perspective, $\circledast$ is extremely natural. In a sense, it is even more natural than the tensor/Kronecker product, since $\circledast$ arises from a Hopf algebra structure on $k[x]$, while $\otimes$ arises from a non-Hopf bialgebra structure on $k[x]$. 

Surely $\circledast$ must have been studied before, but we do not know what name it may have already been given, or where it may have already been written about. 
\end{remark}

\begin{lemma}\label{eigenvalues of circledast}
Let $k$ be a field, let $V_1,V_2$ be finite-dimensional $k$-vector spaces. Let $D_1: V_1\rightarrow V_1$ and $D_2: V_2 \rightarrow V_2$ be $k$-linear transformations. Then the set of eigenvalues of $D_1\circledast D_2$ is the set of all sums of the form $\lambda_1 + \lambda_2$ in $k$, where $\lambda_1$ is an eigenvalue of $D_1$ and $\lambda_2$ is an eigenvalue of $D_2$.
\end{lemma}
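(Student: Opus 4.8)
The plan is to reduce to the algebraically closed case and then simultaneously triangularize $D_1$ and $D_2$. First I would observe that forming $\circledast$ commutes with extension of scalars: writing $\overline{k}$ for an algebraic closure of $k$, the operator $(D_1\circledast D_2)\otimes_k\overline{k}$ is identified with $(D_1\otimes_k\overline{k})\circledast(D_2\otimes_k\overline{k})$, and the characteristic polynomial of a linear operator is unchanged under this base change. So it suffices to compute the characteristic polynomial of $D_1\circledast D_2$ over $\overline{k}$ and read off its roots; the roots that happen to lie in $k$ are then precisely the eigenvalues over $k$. (One should interpret ``eigenvalue of $D_i$'' in the statement over $\overline{k}$ for the set equality to hold literally: over $\R$, two rotations by a quarter-turn have no real eigenvalues, yet their $\circledast$ has $0$ as an eigenvalue. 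Over an algebraically closed field this subtlety disappears, and that is the version I would emphasize.)

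Working over $\overline{k}$, I would choose an ordered basis $e_1,\dots,e_m$ of $V_1$ putting $D_1$ in upper-triangular form with diagonal entries $\lambda_{1,1},\dots,\lambda_{1,m}$ (the eigenvalues of $D_1$ with algebraic multiplicity), and similarly an ordered basis $f_1,\dots,f_\ell$ of $V_2$ putting $D_2$ in upper-triangular form with diagonal entries $\lambda_{2,1},\dots,\lambda_{2,\ell}$. The key point is that, with respect to the basis $\{e_a\otimes f_b\}$ of $V_1\otimes_k V_2$ ordered lexicographically in the pair $(a,b)$, the operator $D_1\circledast D_2$ is again upper triangular: by Definition \ref{def of circledast}, $(D_1\circledast D_2)(e_a\otimes f_b)=(D_1 e_a)\otimes f_b + e_a\otimes(D_2 f_b)$ is a linear combination of the vectors $e_{a'}\otimes f_b$ with $a'\le a$ and $e_a\otimes f_{b'}$ with $b'\le b$, all of which precede or equal $e_a\otimes f_b$ in lexicographic order, while the coefficient of $e_a\otimes f_b$ itself is $\lambda_{1,a}+\lambda_{2,b}$.

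Consequently the characteristic polynomial of $D_1\circledast D_2$ is $\prod_{a=1}^{m}\prod_{b=1}^{\ell}(x-\lambda_{1,a}-\lambda_{2,b})$, so the set of eigenvalues of $D_1\circledast D_2$ over $\overline{k}$ is exactly $\{\lambda_{1,a}+\lambda_{2,b}\}$, the set of sums of an eigenvalue of $D_1$ and an eigenvalue of $D_2$; descending to $k$ as in the first paragraph finishes the argument. Equivalently, and perhaps more cleanly, one can package the triangularization as a filtration: a flag of $D_1$-invariant subspaces of $V_1$ with one-dimensional successive quotients, tensored with the analogous flag for $V_2$, induces a flag of $(D_1\circledast D_2)$-invariant subspaces of $V_1\otimes_k V_2$ whose one-dimensional successive quotients carry the scalar action $\lambda_{1,a}+\lambda_{2,b}$ — which is the representation-theoretic shadow of the Hopf-algebra description of $\circledast$ in Remark \ref{hopf alg remark}, since tensoring over a field is exact.

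There is no serious obstacle here: the only thing that requires care is the bookkeeping that the chosen ordering of the tensor-product basis is genuinely compatible with upper-triangularity (handled above by the lexicographic order, or in the filtration reformulation by taking the product filtration), together with the minor point, already flagged, about whether eigenvalues are taken in $k$ or in $\overline{k}$.
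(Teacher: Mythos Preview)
Your proof is correct. The paper's proof is simply ``Routine,'' so there is little to compare against; your argument via base change to $\overline{k}$ and simultaneous upper-triangularization (equivalently, the tensor product of complete flags) is exactly the kind of routine verification intended. A commented-out version of the paper's proof instead passes to Jordan normal form and analyzes $J_m(\lambda)\circledast J_n(\mu)$ via a filtration by total index $i+j$, but this is just a minor variant of your filtration argument. Your observation about the $k$-versus-$\overline{k}$ ambiguity in the statement is apt: the lemma is really a statement about roots of the characteristic polynomial, and that is how it is used downstream.
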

\begin{proof}
Routine.
\end{proof}

\begin{definition}\label{def of u-property}
Let $k$ be a field, let $A$ be a graded $k$-algebra, 
and let $D: A \rightarrow A$ be a degree-preserving $k$-linear derivation. We will say that $D$ {\em has the $u$-property} if there exists a set $I$ of integers and, for each $i\in I$, an $k$-linear basis $B^i$ for $A^i$ such that:
\begin{itemize}
\item $A$ is generated, as a $k$-algebra, by $\bigcup_{i\in I} A^i$, i.e., by homogeneous elements whose degrees are in the set $I$,
\item for each $i\in I$, $B^i$ is a set of generalized eigenvectors\footnote{To avoid any possible misunderstanding: when we say that the $k$-linear basis $B^i$ for $A^i$ consists of generalized eigenvectors for the action of $D$, we mean that the action of $D$ on $A^i$ is a Jordan matrix when expressed in terms of some total ordering on the basis $B^i$. In other words, we mean that $B^i$ is a union of finite subsets, each of which admits a total ordering and an element $\lambda\in k$ such that $D(v) = \lambda v$ for the maximal element $v$ in the finite subset, and $D(v) = \lambda v + v^{\prime}$ for each nonmaximal $v$ in the finite subset, where $v^{\prime}$ is the successor element to $v$ in the total ordering.} for the action of $D$ on $A^i$,
\item for each $i\in I$, the eigenvalues of the action of $D$ on $B^i$ are in $k^u$ (as defined in Conventions \ref{conventions}),
\item and, for each ordered tuple $(b_1, \dots ,b_j)$ of elements of $\bigcup_{i\in I} B^i$, either the product $b_1b_2\dots b_j$ is zero, or the sum $\lambda_1 + \dots + \lambda_j$ of their eigenvalues is in $k^u$.
\end{itemize}

We will say that $D$ {\em has the potential $u$-property} if $D\otimes_k K: A\otimes_k K \rightarrow A\otimes_k K$ has the $u$-property for some algebraic field extension $K/k$.
\end{definition}

\begin{example}\label{u-prop example}
Our purpose in introducing the $u$-property is to use it, in Theorem \ref{thm on diffs giving chain htpies}, to construct chain-homotopy deformation retracts of the Chevalley-Eilenberg DGA of a Lie $k$-algebra $L$. Since the Chevalley-Eilenberg DGA of $L$ is the exterior algebra on the linear dual of $L$, our applications of Definition \ref{def of u-property} will always be in the case that the DGA $A^{\bullet}$ is an exterior algebra on some vector space $V$ concentrated in grading degree $1$. In this case, the set $I$ from Definition \ref{def of u-property} is simply $\{1\}$. 
Here are some examples of such exterior algebras, and the derivations with the $u$-property on those exterior algebras. 
\begin{itemize}
\item When $k = \mathbb{R}$, the only roots of unity in $k$ are square roots of unity, so if $D: \Lambda^{\bullet}(V) \rightarrow\Lambda^{\bullet}(V)$ has the $u$-property, then the action of $D$ on $\Lambda^{\bullet}(V)$ can have at most three eigenvalues, that is, $-1,0$, and $1$. Given two elements $x,y\in V$, we have $xy=0$ in $\Lambda^2(V)$ if and only if $x$ is a scalar times $y$. Hence, if $D$ has the $u$-property, then the action of $D$ on $V$ can decompose into Jordan blocks of at most three distinct eigenvalues: a one-dimensional eigenspace with eigenvalue $1$, a one-dimensional eigenspace with eigenvalue $-1$, and a generalized eigenspace (of any dimension) with eigenvalue $0$.
\item When $k = \mathbb{C}$, there are far more roots of unity in $k$, but there are very few subsets $S\subseteq \mathbb{C}^u$ with the property that the sum of every pair of distinct elements is either $0$ or a root of unity in $\mathbb{C}$. In particular, such subsets exist with four elements (e.g. $\{0,1,\zeta_3,\zeta_3^2\}$ with $\zeta_3$ a primitive cube root of unity), but no such subsets exist with more than four elements. 
Consequently, the action of $D$ on $A$ can have at most four eigenvalues, and the action of $D$ on $V$ can decompose into Jordan blocks of at most four distinct eigenvalues: up to three one-dimensional eigenspaces with nonzero eigenvalues, and a generalized eigenspace (of any dimension) with eigenvalue $0$.
\item The situation is dramatically different when $k$ is a finite field, since in a finite field, every nonzero element is a root of unity. Consequently, if $k$ is a finite field, $D$ is a degree-preserving $k$-linear derivation on a finite-dimensional graded $k$-algebra $A$, and $K$ is a field extension of $k$ which contains all the eigenvalues of the action of $D\otimes_k \overline{k}$ on $A\otimes_k \overline{k}$, then the base-changed derivation $D\otimes_k K: A\otimes_k K \rightarrow A\otimes_k K$ {\em always has the $u$-property}. 
\end{itemize}
\end{example}

\begin{lemma}\label{basis of eigenvectors}
If $k$ is a field, $A$ is a graded $k$-algebra such that $A^n$ is a finite-dimensional $k$-vector space for each integer $n$, and $D: A\rightarrow A$ is a degree-preserving $k$-linear derivation with the $u$-property, then for each integer $n$, $A^n$ admits a basis of generalized eigenvectors for the action of $D$, such that all eigenvalues are in $k^u$.

Furthermore, if the action of $D$ on $A^i$ is diagonalizable for all $i\in I$, then the action of $D$ on $A^{\bullet}$ is diagonalizable. Here $I$ is as defined in Definition \ref{def of u-property}.
\end{lemma}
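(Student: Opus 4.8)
The plan is to reduce the whole statement to one fact about how the derivation $D$ acts on products, extending Lemma \ref{eigenvalues of circledast} from eigenvalues to generalized eigenvectors. First I would record the Leibniz rule in the form needed here: if $D$ is a degree-preserving $k$-linear derivation of $A$, then for any integers $i_1,\dots,i_j$ the $j$-fold multiplication map
\[ m\colon A^{i_1}\otimes_k\dots\otimes_k A^{i_j}\longrightarrow A^{i_1+\dots+i_j} \]
satisfies $m\circ(D\circledast\dots\circledast D)=D\circ m$, where $D\circledast\dots\circledast D$ is the $j$-fold iterate of the operation of Definition \ref{def of circledast} (associative, since it is induced by the coassociative comultiplication $x\mapsto x\otimes 1+1\otimes x$ on $k[x]$, cf. Remark \ref{hopf alg remark}). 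This is just the derivation identity unwound. Next I would upgrade Lemma \ref{eigenvalues of circledast} at the level of generalized eigenvectors: if for each $\ell$ the vector $v_\ell\in A^{i_\ell}$ is a generalized eigenvector of $D$ with eigenvalue $\lambda_\ell$, annihilated by $(D-\lambda_\ell)^{N_\ell}$, then $v_1\otimes\dots\otimes v_j$ is a generalized eigenvector of $D\circledast\dots\circledast D$ with eigenvalue $\lambda_1+\dots+\lambda_j$. Indeed $D\circledast\dots\circledast D-(\lambda_1+\dots+\lambda_j)$ is a sum of $j$ pairwise commuting operators, the $\ell$th of which acts as $D-\lambda_\ell$ on the $\ell$th tensor factor and hence annihilates $v_1\otimes\dots\otimes v_j$ after $N_\ell$ applications, so by the multinomial theorem its $(N_1+\dots+N_j)$th power kills $v_1\otimes\dots\otimes v_j$. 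Applying $m$ and using the equivariance above, I obtain the key fact: a product $v_1\cdots v_j$ of generalized eigenvectors of $D$ is either zero, or a generalized eigenvector of $D$ whose eigenvalue is the sum of the eigenvalues of the factors.

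Now I would run the main argument. Fix an integer $n$. Since $A$ is generated as a $k$-algebra by $\bigcup_{i\in I}A^i$ and each $A^i$ ($i\in I$) has the $k$-basis $B^i$, the graded piece $A^n$ is spanned over $k$ by products $b_1\cdots b_j$ with each $b_\ell\in\bigcup_{i\in I}B^i$ and $\sum_\ell\deg b_\ell=n$ (together with $1$, the empty product, when $n=0$; note $D(1)=0$). By the key fact, each such nonzero product is a generalized eigenvector of $D$ with eigenvalue $\lambda_{b_1}+\dots+\lambda_{b_j}$, and the last clause of Definition \ref{def of u-property} forces this eigenvalue to lie in $k^u$. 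Hence $A^n$ is spanned by generalized eigenvectors of $D$ with eigenvalues in $k^u$. Since generalized eigenspaces attached to distinct eigenvalues of an operator on a finite-dimensional space are linearly independent, $A^n$ is the internal direct sum of finitely many of these generalized eigenspaces, all for eigenvalues in $k^u$; choosing a Jordan basis of each summand (possible because $D$ minus the eigenvalue acts nilpotently there) yields the required basis of $A^n$ consisting of generalized eigenvectors with eigenvalues in $k^u$.

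For the final assertion, suppose $D$ acts diagonalizably on $A^i$ for every $i\in I$. Then each basis $B^i$, being a set of generalized eigenvectors of a diagonalizable operator, actually consists of honest eigenvectors (a diagonalizable operator has only length-one Jordan chains), so in the argument above we may take every $N_\ell=1$; then each nonzero product $b_1\cdots b_j$ is an honest eigenvector of $D$, so $A^n$ is spanned by eigenvectors of $D$ and $D$ acts diagonalizably on $A^n$, and the union of the eigenbases over all $n$ exhibits $D$ as diagonalizable on $A^\bullet$. I do not anticipate a serious obstacle: the only real content is the passage from eigenvalues to generalized eigenvectors under $\circledast$, and within that the only point requiring care is the bookkeeping of nilpotency orders in the multinomial estimate; the linear-algebra facts invoked at the end (independence of generalized eigenspaces, existence of Jordan bases) I would simply cite as standard.
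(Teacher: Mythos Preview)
Your proposal is correct and follows essentially the same route as the paper's proof: both use the Leibniz rule to identify the action of $D$ on the domain of the multiplication map with the iterated $\circledast$-action, both observe that products of generalized eigenvectors are generalized eigenvectors with eigenvalue equal to the sum, and both invoke the last clause of the $u$-property to force that sum into $k^u$ whenever the product is nonzero. The only organizational difference is that the paper first appeals to Lemma~\ref{eigenvalues of circledast} on the level of eigenvalues and then passes to a Jordan-block decomposition of the tensor factors, whereas you work directly with generalized eigenvectors via the commuting-nilpotents/multinomial argument; your version is slightly more self-contained but carries the same content.
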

\begin{proof}
This is routine, but we give the details, since the introduction of the product $\circledast$ in Definition \ref{def of circledast} permits a nice simplification of one part of the argument. 
Let $I$ and $B^i$ be as in Definition \ref{def of u-property}.
Since $D$ has the $u$-property, $A$ is generated as an $k$-algebra by $A^i$ for $i\in I$. 
In particular, the $k$-module homomorphism
\begin{align}\label{k-module mult map} \nabla: \bigoplus_{(i_1, \dots ,i_{\ell})\in I^{\ell} : \sum_{j=1}^{\ell}i_j = n} A^{i_1}\otimes_k A^{i_2}\otimes_k \dots \otimes_k A^{i_{\ell}} &\rightarrow A^n,\end{align}
given by multiplication in $A$, is surjective. 
We equip the domain of \eqref{k-module mult map} with the $k$-linear action of $D$ induced by iteration application of the Leibniz rule $D(v\otimes w) = D(v)\otimes w + v\otimes D(w)$ (i.e., $D$ acts by $\circledast$, as in Definition \ref{def of circledast}), so that $\nabla$ commutes with the action of $D$. By Lemma \ref{eigenvalues of circledast}, every eigenvalue of the action of $D$ on $A^{i_1}\otimes_k A^{i_2}\otimes_k \dots \otimes_k A^{i_{\ell}}$ is the sum of $\ell$ elements of $k^u$, and since $\nabla$ is an epimorphism in the category of finite-dimensional $k$-vector spaces equipped with a $k$-linear endomorphism, each eigenvalue of the action of $D$ on $A^n$ is a sum of $\ell$ elements of $k^u$.

However, not every sum of $\ell$ elements of $k^u$ is an element of $k^u$, so we are not done yet. 
Since $A$ has the $u$-property, the eigenvalues of the action of $D$ on each $A^i$ are already in $k$ (i.e., without need to extend the base field $k$), so each $A^i$ admits a Jordan decomposition over $k$. Given a Jordan block $J_{m_j}(\lambda_j)$ of $A_j$ for each $j=1, \dots ,\ell$, if $\sum_{j=1}^{\ell}\lambda_j$ is {\em not} in $k^u$, the $u$-property of $A$ gives us that $\nabla(v_1\otimes v_2\otimes\dots\otimes v_{\ell})=0$ whenever $v_j\in J_{m_j}(\lambda_j)$ for all $j\in \{ 1,2,\dots ,\ell\}$. So $\nabla$ vanishes on every element in a $k$-linear basis for $J_{m_1}(\lambda_1)\otimes_kJ_{m_2}(\lambda_2) \otimes_k \dots \otimes_k J_{m_{\ell}}(\lambda_{\ell})$. 

Consequently, in the decomposition of $\bigoplus_{(i_1, \dots ,i_{\ell})\in I^{\ell} : \sum_{j=1}^{\ell}i_j = n} A^{i_1}\otimes_k A^{i_2}\otimes_k \dots \otimes_k A^{i_{\ell}}$ into a direct sum of $\circledast$-products of Jordan blocks, a given summand $J_{m_1}(\lambda_1) \circledast\dots\circledast J_{m_{\ell}}(\lambda_{\ell})$ has nonzero image under $\nabla$ only if $\sum_{j=1}^{\ell}\lambda_j\in k^u$. Since $\nabla$ is surjective, each eigenvalue of the action of $D$ on $A^n$ is an element of $k^u$.

The last part is even easier: suppose that the action of $D$ on $A^i$ is diagonalizable for all $i\in I$. It is elementary that, if $x,y$ are eigenvectors for the action of $D$ with eigenvalues $\lambda_x,\lambda_y$ respectively, then since $D$ is an ungraded derivation, $xy$ is an eigenvector for $D$ with eigenvalue $\lambda_x + \lambda_y$. Since every element in $A^{\bullet}$ is a $k$-linear combination of products of elements in $\{ A^i: i\in I\}$ and since $D$ acts diagonalizably on $\bigoplus_{i\in I} A_i$, we have that every element in $A^{\bullet}$ is an $k$-linear combination of eigenvectors for the action of $D$, i.e., $D$ acts diagonalizably on $A^{\bullet}$.
\end{proof}

\begin{prop}\label{iterated derivation is idempotent}
Suppose $k$ is a field, $A$ is a graded $k$-algebra such that $A$ is a finite-dimensional $k$-vector space, and $D: A\rightarrow A$ is a degree-preserving $k$-linear derivation which has the potential $u$-property. Suppose that either $k$ has positive characteristic or that the action of $D$ on $A$ is diagonalizable. Then there exists some positive integer $n$ such that the $n$th iterate of $D$, $D^{\circ n}: A \rightarrow A$, is diagonalizable and idempotent.
\end{prop}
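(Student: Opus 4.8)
The plan is to reduce to a single Jordan block after extending scalars, and to show that such a block, acted on by the derivation, gives an operator whose eigenvalues are a sum of roots of unity in the algebraic closure, so that an appropriate power kills the nilpotent part and lands in the finite group generated by those roots of unity. First I would replace $k$ by the algebraic field extension $K$ over which $D\otimes_k K$ has the $u$-property; since diagonalizability and idempotence of an iterate can be checked after a faithfully flat base change, it suffices to prove the claim for $D\otimes_k K$, so without loss of generality $D$ itself has the $u$-property on the finite-dimensional graded $k$-algebra $A$. By Lemma \ref{basis of eigenvectors}, $A^{\bullet}$ admits a basis of generalized eigenvectors for $D$ with all eigenvalues in $k^u$, i.e. each such eigenvalue is either nilpotent (hence $0$, as $A$ is finite-dimensional over a field and $D$ is $k$-linear, so its eigenvalues live in $\overline{k}$ where the only nilpotent is $0$) or a root of unity.

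Next I would analyze the action of the $N$th iterate $D^{\circ N}$ on a single Jordan block $J_m(\lambda)$ with $\lambda\in k^u$. Write $D = \lambda\cdot\id + M$ on the block, where $M$ is the nilpotent shift with $M^m = 0$. Then $D^{\circ N} = (\lambda + M)^N = \sum_{j=0}^{m-1}\binom{N}{j}\lambda^{N-j}M^j$. There are two cases. If $\lambda = 0$: then $D^{\circ N} = M^N = 0$ for $N \geq m$, which is diagonalizable (it is zero) and idempotent. If $\lambda$ is a root of unity: here is where the characteristic hypothesis enters. In characteristic $p$, the Frobenius-type identity $\binom{p^r}{j} \equiv 0 \pmod p$ for $0 < j < p^r$ shows that for $N = p^r$ with $p^r \geq m$ we get $D^{\circ p^r} = \lambda^{p^r}\cdot\id$ on the block; and since $\lambda$ is a root of unity of some order coprime-to-$p$ part $d$, choosing $N = p^r$ with $p^r \equiv 1 \pmod d$ and $p^r \geq m$ simultaneously (possible since the powers of $p$ modulo $d$ are periodic and unbounded) forces $D^{\circ N} = \id$ on that block. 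In characteristic zero we are instead in the diagonalizable hypothesis case, so there are no nontrivial Jordan blocks and $D$ is already semisimple with eigenvalues in $k^u$; taking $N$ to be a common multiple of the orders of all the (finitely many) root-of-unity eigenvalues makes $D^{\circ N}$ equal to the identity on the nonzero eigenspaces and zero on the $0$-eigenspace.

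Finally I would combine the blocks: $A^{\bullet}$ is a finite direct sum of such Jordan blocks (finitely many distinct eigenvalues, finitely many blocks), so I choose a single exponent $N$ that works simultaneously for all of them—in the positive-characteristic case, $N = p^r$ with $p^r$ at least the maximal block size and $p^r \equiv 1$ modulo the lcm of the orders of all nonzero eigenvalues; in the characteristic-zero-plus-diagonalizable case, $N$ the lcm of the orders of the nonzero eigenvalues. For this $N$, on each block $D^{\circ N}$ is either $0$ or $\id$, hence $D^{\circ N}$ is diagonalizable with all eigenvalues in $\{0,1\}$, hence idempotent. The main obstacle—really the only point requiring care—is the uniform choice of exponent in positive characteristic: one needs a single $p$-power that is both large enough to annihilate every nilpotent part (via divisibility of binomial coefficients $\binom{p^r}{j}$) and congruent to $1$ modulo the order of every root-of-unity eigenvalue; verifying that such a $p$-power exists is an elementary fact about the multiplicative order of $p$ modulo an integer prime to $p$, and packaging the Jordan-block computation cleanly (perhaps via the $\circledast$ formalism of Definition \ref{def of circledast}, applied to express iterates on products of blocks) is the one spot where the write-up will need genuine, if routine, attention.
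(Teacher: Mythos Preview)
Your proof is correct and follows essentially the same strategy as the paper's: pass to an extension where the $u$-property holds, use Lemma~\ref{basis of eigenvectors} to see all eigenvalues are zero or roots of unity, then choose an exponent that simultaneously kills the nilpotent parts (via $p$-power binomial vanishing in positive characteristic) and sends every root-of-unity eigenvalue to $1$. The only cosmetic difference is that the paper first takes the lcm $i$ of the orders of the eigenvalues to force eigenvalues into $\{0,1\}$, then separately chooses a $p$-power $p^b$ exceeding every block size to diagonalize, and finally sets $n=\mathrm{lcm}(i,p^b)$; you instead find a single $p$-power $p^r$ satisfying both constraints at once (large enough and $\equiv 1$ modulo the lcm of the orders), which is slightly more economical but amounts to the same thing.
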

\begin{proof}
This is a matter of routine linear algebra, but we include the proof, to make it clear that $n$ can be chosen so that $D^{\circ n}$ is indeed diagonalizable and not merely {\em potentially} diagonalizable. Choose some field extension $K/k$ such that $D\otimes_k K: A\otimes_k K \rightarrow A\otimes_k K$ has the $u$-property.
By Lemma \ref{basis of eigenvectors}, each $A^n\otimes_k K$ admits a basis of generalized eigenvectors for the action of $D\otimes_k K$ with the property that all the eigenvalues are in $K^u$. Since $K$ is a field, $K^u$ is the union of $\{0\}$ with the set of roots of unity in $K$. Finite-dimensionality of $A$ as a $k$-vector space gives us that the action of $D\otimes_k K$ on $A\otimes_k K$ has only finitely many eigenvalues, so let $i$ be the least common multiple of the orders of the roots of unity that occur as eigenvalues of the action of $D\otimes_k K$ on $A\otimes_k K$. Hence $\left(D\otimes_k K\right)^{\circ i} = D^{\circ i}\otimes_k K$ has its eigenvalues contained in $\{ 0,1\}$. Hence $D^{\circ i}$ also has its eigenvalues contained in $\{0,1\}$.

If the action of $D$ on $A$ is diagonalizable, then we are done: $D^{\circ i}$ is idempotent and also diagonalizable.
However, if the action of $D$ on $A$ is not diagonalizable, then $D^{\circ i}$ might not be idempotent on $A$: if the action of $D$ on $A$ has a very large (relative to $i$) Jordan block, for example, then $D^{\circ i}$ might still be a block sum of large Jordan blocks, and consequently not idempotent. 

However, when $k$ has positive characteristic, we can fix this by taking an even larger power of $D$. Let $p$ be the characteristic of $k$. Given an $n$-by-$n$ Jordan block matrix $J$ and a prime number $\ell$, let $\ell^a$ be the smallest power of $\ell$ such that $\ell^a>n$. Then the entries in $J^{\ell^a}$ above the main diagonal are all divisible by $\ell$. In particular, if $\ell=p$, then $J^{\ell^a}$ is a diagonal matrix over $k$. 

So let $p^b$ be the smallest power of $p$ such that $p^b$ is greater than  the size of every Jordan block in the Jordan decomposition of the action of $D$ on $A$. Then $D^{\circ p^b}$ acts diagonally on $A$. Letting $n$ be the least common multiple of $i$ and of $p^b$, we get that $D^{\circ n}$ acts diagonally on $A$ with its eigenvalues contained in $\{0,1\}$. So $D^{\circ n}$ is idempotent and diagonalizable, as desired.
\end{proof}

\begin{observation}\label{dh+hd is a deriv}
Beginning in Theorem \ref{thm on diffs giving chain htpies} we will use the following elementary observation to build chain homotopies with good multiplicative properties. If $A$ is a graded $k$-algebra and $h_1,h_2$ are graded $k$-linear derivations on $A$ of odd degree (i.e., $h_1$ and $h_2$ each raise or lower grading degree by some odd number of degrees), then the composites $h_1h_2$ and $h_2h_1$ are not usually derivations at all. But 
$h_1h_2 + h_2h_1$ is an {\em ungraded} derivation on $A$. 
In particular, if $h$ is a graded $k$-linear derivation of degree $-1$ on a differential graded $k$-algebra $A$, then $hd+dh$ is a {\em ungraded}, but nevertheless degree-preserving, $k$-linear derivation on $A$. (See Conventions \ref{conventions} for our terms for various grading conditions on derivations.)
\end{observation}

\subsection{How to build a model of a DGA from a derivation on that DGA}

Now we can use the various elementary observations in \cref{elementary lin alg} to prove something which is a bit less elementary. The main result in this section is Corollary \ref{main thm cor}, which gives a simple and surprisingly general way to take a DGA over a finite field, and construct a sub-DGA of it which has the same cohomology.
\begin{theorem}\label{thm on diffs giving chain htpies}
Let $k$ be a field and let $A^{\bullet}$ be a differential graded $k$-algebra which is additionally equipped with a graded $k$-linear derivation $h: A^{\bullet} \rightarrow A^{\bullet-1}$.
Let $I$ be a set of integers such that $A^{\bullet}$ is generated, as an $k$-algebra, by its elements\footnote{One can always simply take $I = \mathbb{Z}$, and then the statement of the theorem is simpler. The advantage of allowing $I$ to be strictly smaller than $\mathbb{Z}$ is that, in practical situations, it is easier to check the hypotheses of the theorem in a more restricted range of grading degrees. In all of our motivating applications of Theorem \ref{thm on diffs giving chain htpies}, we shall let $I$ be $\{ 1\}$.} in degrees $\in I$.
Make the following assumptions:
\begin{enumerate}
\item $A^{\bullet}$ is finite-dimensional as an $k$-vector space,
\item the action of $dh+hd: A^{\bullet} \rightarrow A^{\bullet}$ on $A^{\bullet}$ has the potential $u$-property,
\item $h\circ h$ vanishes on a set of homogeneous $k$-algebra generators for $A^{\bullet}$,
\item and either $k$ has positive characteristic or the action of $dh+hd$ on $A^{i}$ is potentially diagonalizable 
for all $i\in I$.
\end{enumerate}
Then there exists some integer $n$ such that the kernel of $(dh)^n+(hd)^n: A^{\bullet} \rightarrow A^{\bullet}$ is a chain-homotopy deformation retract of $A^{\bullet}$. If the action of $dh+hd$ on $A^i$ is potentially diagonalizable for all $i\in I$, then we may take $n$ to be $1$, and in that case $\ker (dh+hd)$ is furthermore a sub-DGA of $A^{\bullet}$.
\end{theorem}
\begin{proof}
By Observation \ref{dh+hd is a deriv}, $dh+hd$ is an ungraded derivation, and the assumptions made in the statement of the Theorem are enough for Proposition \ref{iterated derivation is idempotent} to imply that $(dh+hd)^n$ is idempotent for some $n$. Since $hh$ vanishes on a set of generators for the $k$-algebra $A^{\bullet}$, routine induction on word length yields that $hh=0$. Of course $dd=0$ since $A^{\bullet}$ is a DGA, so 
\begin{equation}\label{dh+hd eq 1} (dh+hd)^n = (dh)^n + (hd)^n = d(h(dh)^{n-1}) + (h(dh)^{n-1})d.\end{equation}
Clearly $\id_{A^{\bullet}} - \left((dh)^n + (hd)^n\right)$ commutes with $d$, i.e., it is a chain map. Equation \eqref{dh+hd eq 1} establishes that $h(dh)^{n-1}$ is a chain homotopy between $\id_{A^{\bullet}}$ and $\id_{A^{\bullet}} - \left((dh)^n + (hd)^n\right)$.

Let $C^{\bullet}$ denote the image of the map $\id_{A^{\bullet}} - \left((dh)^n + (hd)^n\right): A^{\bullet}\rightarrow A^{\bullet}$. 
Let $\iota: C^{\bullet}\rightarrow A^{\bullet}$ denote the inclusion map, and let $\pr: A^{\bullet}\rightarrow C^{\bullet}$ denote the map $\id_{A^{\bullet}} - \left((dh)^n + (hd)^n\right)$ with its codomain restricted to its image, $C^{\bullet}$.

It is a general fact that, if $\phi$ is an idempotent endomorphism of an abelian group, then so is $\id - \phi$.
Hence, since $(dh+hd)^n = (dh)^n + (hd)^n$ is idempotent, $\id - \left((dh)^n + (hd)^n\right)$ is also idempotent. So for any $x\in C^{\bullet}$, we have
 $(\pr\circ \iota)(x) = \left(\id - \left((dh)^n + (hd)^n\right)\right)(x) = x$. Consequently $\pr\circ \iota = \id_{C^{\bullet}}$, so $C^{\bullet}$ is a retract of $A^{\bullet}$.

Idempotence of $(dh)^n + (hd)^n$, along with its diagonalizability (established by Proposition \ref{iterated derivation is idempotent}), gives us that $A^{\bullet}$ has an $k$-linear basis consisting of eigenvectors for the action of $(dh)^n+(hd)^n$, whose eigenvalues are each $0$ or $1$. Consequently $C^{\bullet}$, the image of $\id - \left((dh)^n + (hd)^n\right): A^{\bullet}\rightarrow A^{\bullet}$, is equal to the $0$-eigenspace of $(dh)^n + (hd)^n$.

If we are in the circumstance that $dh+hd$ acts potentially diagonalizably on $A^i$ for all $i\in I$, then by Lemma \ref{basis of eigenvectors},
$A^{\bullet}\otimes_k K$ has a $K$-linear basis consisting of eigenvectors for the action of $(dh+hd)\otimes_k K$, for some field extension $K/k$. Each such eigenvector has eigenvalue which is either 
\begin{itemize}
\item an $m$th root of unity for some integer $m$, in which case that eigenvector is in the $1$-eigenspace for $\left((dh)^m+(hd)^m\right)\otimes_k K$, 
\item or $0$, in which case that eigenvector is in the kernel of $\left((dh)^m+(hd)^m\right)\otimes_k K$. 
\end{itemize}
Since $K$ is a field, its only root of zero is zero itself. So the kernel of $\left((dh)^m+(hd)^m\right)\otimes_k K$ on $A^{\bullet}$ is equal to the kernel of $\left(dh+hd\right)\otimes_k K$ on $A^{\bullet}\otimes_k K$, i.e., $\ker\left((dh)^m+(hd)^m\right) = \ker\left(dh+hd\right)$. Our point is that, when $dh+hd$ acts potentially diagonalizably on $A^i$ for all $i\in I$, then we can indeed take $n$ to be $1$ in the statement of the theorem. It is elementary and routine to verify that $\ker (dh+hd)$ is a sub-DGA of $A^{\bullet}$, using Observation \ref{dh+hd is a deriv}.

Finally, we already know that 
\begin{align*}
 \id_{A^{\bullet}} - (\iota\circ\pr)
  &= \id_{A^{\bullet}} - \left(\id_{A^{\bullet}} - \left((dh)^n + (hd)^n\right)\right) \\
  &= d(h(dh)^{n-1}) + (h(dh)^{n-1})d,
\end{align*}
so $C^{\bullet}$ is not only a retract of $A^{\bullet}$, but in fact a chain-homotopy deformation retract of $A^{\bullet}$.
\end{proof}

\begin{corollary}
Let $k,A^{\bullet},h,n$ be as in the statement of Theorem \ref{thm on diffs giving chain htpies}. Then the cohomology of the sub-cochain-complex $\ker \left((dh)^{n} + (hd)^{n}\right)$ of $A^{\bullet}$ is isomorphic to $H^*(A^{\bullet})$.
\end{corollary}

\begin{corollary}\label{main thm cor}
Let $k$ be a finite field and let $A^{\bullet}$ be a differential graded $k$-algebra. Suppose that $I$ is a set of integers such that $A^{\bullet}$ is generated, as an $k$-algebra, by elements in grading degrees in $I$. Suppose furthermore that $A^{\bullet}$ is a finite-dimensional $k$-vector space.
If $h: A^{\bullet} \rightarrow A^{\bullet-1}$ is a $k$-linear graded derivation such that $h(h(x))=0$ for all $x\in A^i$ and all $i\in I$, 
then there exists some integer $m$ such that $\ker \left((dh)^m + (hd)^m\right)$ is a chain-homotopy deformation retract of $A^{\bullet}$.

Furthermore, if the action of $dh+hd$ on $A^i$ is potentially diagonalizable for all $i\in I$, then we can take $m$ to be $1$, and in that case $\ker (dh+hd)$ is also a sub-DGA of $A^{\bullet}$.
\end{corollary}
\begin{proof}
Over a finite field, every linear operator has the potential $u$-property, as explained in Example \ref{u-prop example}. Hence this is a special case of Theorem \ref{thm on diffs giving chain htpies}.
\end{proof}

\subsection{Small models for the cohomology of a Lie algebra}

In this subsection, we apply Theorem \ref{thm on diffs giving chain htpies} and Corollary \ref{main thm cor} to the Chevalley-Eilenberg DGAs of Lie algebras. 

\begin{definition}
Let $R$ be a commutative ring, and let $L$ be a Lie $R$-algebra. By a {\em model for the cohomology of $L$,} we mean a sub-DGA $B^{\bullet}$ of the Chevalley-Eilenberg DGA $\Lambda^{\bullet}_R(L^*)$ such that the inclusion $B^{\bullet}\hookrightarrow\Lambda^{\bullet}_R(L^*)$ is a quasi-isomorphism.
\end{definition}

Recall the following classical definitions: the {\em adjoint representation} of a Lie $k$-algebra $L$ is the $k$-linear representation $\ad: L \rightarrow \mathfrak{gl}(L)$ given by $\ad(\ell)(x) = [\ell,x]$. 
An element $\ell\in L$ is called {\em semisimple} if $\ad(\ell)\in \mathfrak{gl}(L)$ is potentially diagonalizable.

Let $L$ be a finite-dimensional Lie algebra over a field $k$. Here are three observations which, taken together, drastically simplify the setup for Theorem \ref{thm on diffs giving chain htpies} and Corollary \ref{main thm cor}:
\begin{enumerate}
\item The Chevalley-Eilenberg DGA $\Lambda^{\bullet}_k(L^*)$ is generated in cohomological degree $1$, so the set of integers $I$ in the statement of Theorem \ref{thm on diffs giving chain htpies} and Corollary \ref{main thm cor} can be taken to be simply $\{ 1\}$. 
\item Furthermore, every $k$-linear derivation $h: L^* = \Lambda^1_k(L^*) \rightarrow\Lambda^0_k(L^*) = k$ extends uniquely to a graded $k$-linear derivation $\Lambda^{\bullet}_k(L^*) \rightarrow\Lambda^{\bullet-1}_k(L^*)$. 
\item There is a natural bijection between $k$-linear functionals $h: L^*\rightarrow k$ and elements of $L$. 
Hence every $k$-linear functional $h$ of the kind appearing in the statement of Corollary \ref{main thm cor} must arise from some element $\ell\in L$. 
\end{enumerate}
The point is that, once the Lie algebra $L$ is specified, in order to apply Corollary \ref{main thm cor} to the Chevalley-Eilenberg complex $\Lambda^{\bullet}_k(L^*)$, we need only specify a single element $\ell$ of $L$. One needs to check that $dh+hd:\Lambda^{\bullet}_k(L^*) \rightarrow \Lambda^{\bullet}_k(L^*)$ is potentially diagonalizable. It is a routine exercise to check that $dh+hd: L^* = \Lambda^{1}_k(L^*) \rightarrow \Lambda^{1}_k(L^*) = L^*$ is dual to $2\cdot \ad_{\ell}$. Hence, if $k$ has characteristic $\neq 2$, then $dh+hd$ is potentially diagonalizable if and only if $\ell$ is semisimple. Corollary \ref{main thm cor} now gives us:
\begin{theorem}\label{main thm on models for lie algs}
Let $L$ be a finite-dimensional Lie algebra over a finite field $k$ of characteristic $\neq 2$. Let $\ell$ be a semisimple element of $L$, and write $h$ for the $k$-linear functional $h: L^{*}\rightarrow k$ given by $h(\phi) = \phi(\ell)$. Then $h$ extends to a unique graded $k$-linear derivation $h^{\ell}: \Lambda^{\bullet}_k(L^*) \rightarrow\Lambda^{\bullet-1}_k(L^*)$, and the sub-DGA $\ker (dh^{\ell}+h^{\ell}d)$ of $\Lambda^{\bullet}_k(L^*)$ is a model for the cohomology of $L$.
\end{theorem}

Theorem \ref{main thm on models for lie algs} varies in its usefulness, depending on the Lie algebra $L$. In the trivial case of Theorem \ref{main thm on models for lie algs}, one lets the semisimple element $\ell$ be zero, and the hypotheses of the theorem are satisfied, but then the theorem does not tell us anything useful. In that case, we would have $dh^{\ell}+h^{\ell}d = 0$, so the theorem would simply state that $\Lambda^{\bullet}_k(L^*)$ is a model for the cohomology of $L$, which of course we already know perfectly well.

Let us fix a Lie algebra $L$. In order to get a handle on the nontrivial cases of Theorem \ref{main thm on models for lie algs}, preorder the semisimple elements of $L$ by letting $\ell\leq \ell^{\prime}$ if $\ker (dh^{\ell} + h^{\ell}d)\subseteq \ker (dh^{\ell^{\prime}} + h^{\ell^{\prime}}d)$. This preordering is not necessarily antisymmetric, i.e., it may happen that $\ell\leq \ell^{\prime}$ and $\ell^{\prime}\leq \ell$ are both true even when $\ell\neq \ell^{\prime}$. Let us write $\ell\sim \ell^{\prime}$ if $\ell\leq \ell^{\prime}$ and $\ell^{\prime}\leq \ell$ are both true. 

For some Lie algebras $L$, {\em every} semisimple element is equivalent to zero in the above sense; for example, this happens when $L$ is the three-dimensional Heisenberg Lie algebra. For such Lie algebras, Theorem \ref{main thm on models for lie algs} is useless: the only model it produces for the cohomology of $L$ is the entire Chevalley-Eilenberg complex itself. From a handful of calculations in special cases, we suspect that $L$ is nilpotent if and only if all semisimple elements in $L$ are equivalent to zero, but we have not attempted to prove this. 

In the opposite extreme, we have the semisimple Lie algebras, for which there tend to be relatively many equivalence classes of semisimple elements. For semisimple $L$, Theorem \ref{main thm on models for lie algs} produces an assortment of different models for the cohomology of $L$. We have not investigated the structure of the preorder of semisimple elements, e.g. whether it is uniserial up to equivalence, or how many minimal equivalence classes it has. Perhaps this is worth looking into, but leave off such investigations for another time. 

Our main motivating case of Theorem \ref{main thm on models for lie algs} is the reductive case $L = \mathfrak{gl}_n(k)$, which we handle in \cref{Applications to the cohomology...}.

\subsection{The critical complex in $\Lambda^{\bullet}_k(\mathfrak{gl}_n(k)^*)$ models the modular Lie algebra $\mathfrak{gl}_n(k)$}
\label{Applications to the cohomology...}

Throughout, fix a positive integer $n$ and a finite field $k$.

We begin with a result which is not new: Theorem \ref{sln cohomology} is the Dynkin type $A_{n-1}$ case of a 1986 theorem of Friedlander and Parshall.
\begin{theorem}\label{sln cohomology}\cite[Theorem 1.2]{MR0821318}
Let $n$ be a positive integer, and let $p$ be a prime satisfying $p\geq 3n-3$. Then the cohomology $H^*(\mathfrak{sl}_n(\mathbb{F}_p);\mathbb{F}_p)$ of the Lie algebra $\mathfrak{sl}_n(\mathbb{F}_p)$ is isomorphic, as a graded $\mathbb{F}_p$-algebra, to the exterior $\mathbb{F}_p$-algebra $\Lambda_{\mathbb{F}_p}(x_3, x_5, \dots ,x_{2n-1})$, with $x_i$ in degree $i$.
\end{theorem}

\begin{prop}\label{gln cohomology}
Let $n,p$ be as in Theorem \ref{sln cohomology}. Then the cohomology $H^*(U(n); \mathbb{F}_p)$ of the unitary group is isomorphic to the cohomology $H^*(\mathfrak{gl}_n(\mathbb{F}_p); \mathbb{F}_p)$ of the Lie algebra $\mathfrak{gl}_n(\mathbb{F}_p)$. 
\end{prop}
\begin{proof}
We show $H^*(U(n); \mathbb{F}_p) \cong H^*(\mathfrak{gl}_n(\mathbb{F}_p); \mathbb{F}_p)$ by direct computation of each side. From well-known Serre spectral sequence calculations, one gets that $H^*(U(n); \mathbb{F}_p)$ is $\Lambda_{\mathbb{F}_p}(x_1, x_3, x_5, ..., x_{2n-1})$. Similarly, $\mathfrak{gl}_n(\mathbb{F}_p)$ splits as the product of a one-dimensional abelian Lie algebra $T$ with the semisimple Lie algebra $\mathfrak{sl}_n(\mathbb{F}_p)$. The cohomology of the former is $\Lambda_{\mathbb{F}_p}(x_1)$, while the cohomology of the latter is $\Lambda_{\mathbb{F}_p}(x_3, \dots ,x_{2n-1})$, by Theorem \ref{sln cohomology}. Consequently $H^*(\mathfrak{gl}_n(\mathbb{F}_p); \mathbb{F}_p)$ is, like $H^*(U(n);\mathbb{F}_p)$, isomorphic to $\Lambda_{\mathbb{F}_p}(x_1, x_3, x_5, ..., x_{2n-1})$.
\end{proof}

\begin{prop}\label{def of Adot}
Let $R$ be a commutative ring, and let $n$ be a positive integer. The Chevalley-Eilenberg differential graded $R$-algebra $\Lambda^{\bullet}_R(\mathfrak{gl}_n(R)^*)$ of the Lie $R$-algebra $\mathfrak{gl}_n(R)$ has $R$-linear basis
$\{ h_{i,j}: i,j\in Z_n \}$, where $h_{i,j}: \mathfrak{gl}_n(R)\rightarrow R$ is the linear functional that reads off the entry in a matrix $M\in\mathfrak{gl}_n(R)$ in row $j$ and column $i+j$. The differential on $\Lambda^{\bullet}_R(\mathfrak{gl}_n(R)^*)$ is given by
\begin{align}
\label{diff 0945} d(h_{i,j}) &= \sum_{\ell=1}^n h_{\ell,j}h_{i-\ell,j+\ell},
\end{align}
with the first and second subscripts in $h_{i,j}$ understood as being defined modulo $n$.
\end{prop}
\begin{proof}
Write $x_{i,j}$ for the matrix in $\mathfrak{gl}_n(R)$ with a $1$ in row $j$ and column $i+j$, and a $0$ in all other entries. It is elementary to verify that the Lie bracket in $\mathfrak{gl}_n(R)$ is given by
\begin{align}\label{lie bracket 0945}
 [x_{i,j},x_{k,\ell}] &=  \delta_{i+j}^{\ell} x_{i+k,j} - \delta_{k+\ell}^j x_{i+k,\ell}.\end{align}
Dualizing formula \eqref{lie bracket 0945} yields \eqref{diff 0945}.
\end{proof}

We will refer to the set of products of elements in the set $\{ h_{i,j}: i,j\in Z_n \}$ as the {\em $h$-basis} for $\Lambda^{\bullet}(\mathfrak{gl}_n(R)^{\ast})$.

\begin{definition}\label{gradings def}
Let $n$ be a positive integer, and let $k$ be a field of characteristic $p>0$. 
\begin{itemize}
\item Given integers $r,c\in Z_n$ and $\alpha\in k$, we write $M_{r,c}(\alpha)$ for the matrix with $\alpha$ in row $r$ and column $c$, and zero in all other entries. 
\item
The Lie algebra $\mathfrak{gl}_n(k)$ admits a $\mathbb{Z}/\left(\frac{p^n-1}{p-1}\right)\mathbb{Z}$-grading\footnote{See Conventions \ref{conventions} for our conventions for gradings on Lie algebras.} given by letting the matrix $M_{r,c}(\alpha)$ be in degree $p^r(p^{c-r}-1)/(p-1)$. We call this the {\em reduced internal grading on $\mathfrak{gl}_n(k)$.}
\item 
There is also a $\mathbb{Z}$-grading on the differential graded algebra $\Lambda^{\bullet}_k(\mathfrak{gl}_n(k)^*)$ given by 
$\left|\left| h_{i,j}\right|\right| = p^j\frac{p^i-1}{p-1}$.
Upon reducing this $\mathbb{Z}$-grading to a $\mathbb{Z}/\left( \frac{p^n-1}{p-1}\right)\mathbb{Z}$-grading, it coincides with the $\mathbb{Z}/\left( \frac{p^n-1}{p-1}\right)\mathbb{Z}$-grading on $\Lambda^{\bullet}_k(\mathfrak{gl}_n(k)^*)$ induced by the internal grading on $\mathfrak{gl}_n(k)$. Hence we refer to this $\mathbb{Z}$-grading on $\Lambda^{\bullet}_k(\mathfrak{gl}_n(k)^*)$ as the {\em reduced internal grading}\footnote{Recall that the purpose of this paper is to calculate the cohomology of the Morava stabilizer group---equivalently, the Morava stabilizer algebra $\Sigma(n)\cong K(n)_*\otimes_{BP_*}BP_*BP\otimes_{BP_*} K(n)_*$---with appropriate coefficients. The Hopf algebra $\Sigma(n)$ inherits a grading from $BP_*BP$, traditionally called the ``internal grading.'' In this grading, $\Sigma(n)$ is concentrated in degrees divisible by $2(p-1)$. Under the comparison we will ultimately draw between the cohomology of the Hopf algebra $\Sigma(n)$ and the cohomology of the Lie algebra $\mathfrak{gl}_n(\mathbb{F}_p)$, the cohomology of $\mathfrak{gl}_n(\mathbb{F}_p)$ in reduced internal degree $m$ corresponds to the cohomology of $\Sigma(n)$ in internal degree $2(p-1)m$. Hence the name ``reduced internal grading'': it is simply the internal grading coming from $BP_*BP$, but ``reduced'' by being divided by $2(p-1)$.}.
\item 
We say that an element of $\Lambda^{\bullet}_k(\mathfrak{gl}_n(k)^*)$ is 
\begin{itemize}
\item {\em homogeneous} if it is homogeneous with respect to the cohomological grading,
\item {\em internally homogeneous} if it is homogeneous with respect to the reduced internal grading,
\item and {\em bihomogeneous} if it is both homogeneous and internally homogeneous.
\end{itemize}
\item
The differential $d$ on $\Lambda^{\bullet}_k(\mathfrak{gl}_n(k)^*)$ preserves 
the reduced internal grading. Consequently there is a sub-DGA of $\Lambda^{\bullet}_k(\mathfrak{gl}_n(k)^*)$ consisting of the internally homogeneous elements whose reduced internal degree is a multiple of $\frac{p^n-1}{p-1}$. We call $cc^{\bullet}(\mathfrak{gl}_n(k))$ for this sub-DGA of $\Lambda^{\bullet}_k(\mathfrak{gl}_n(k)^*)$, and we call it the {\em critical complex} of $\mathfrak{gl}_n(k)$. 
\end{itemize}
\end{definition}

Our present goal is to apply Corollary \ref{main thm cor} to show that the critical complex is a model for the Lie algebra cohomology of $\mathfrak{gl}_n(\mathbb{F}_p)$. That goal will be accomplished in Theorem \ref{main chain htpy application thm}. The idea is as follows: we would like to define a linear functional $\lambda$ on $\Lambda^{\bullet}_k(\mathfrak{gl}_n(k)^*)$ and a derivation $h$ on $\Lambda^{\bullet}_k(\mathfrak{gl}_n(k)^*)$ which sends each $1$-cochain $x$ to some element $h(x)\in k$ such that
\begin{itemize}
\item if $x$ is an eigenvector of the action of $dh+hd$ on the $1$-cochains, then $\lambda(x)$ is equal to the eigenvalue of $dh+hd$ acting on $x$, and 
\item for all bihomogeneous $y\in \Lambda^{\bullet}_k(\mathfrak{gl}_n(k)^*)$, we have $\lambda(y) = 0$ if and only if the reduced internal degree $\left|\left| y\right|\right|$ is divisible by $\frac{(p^n-1)}{p-1}$. 
\end{itemize}
These two conditions would suffice to guarantee that the kernel of $dh+hd$ is equal to the kernel of $\lambda$, which is in turn equal to the critical complex. Then Corollary \ref{main thm cor} will ensure that the critical complex has the same cohomology as $\Lambda^{\bullet}_k(\mathfrak{gl}_n(k)^*)$. 

This is indeed the strategy we pursue in the proof of Theorem \ref{main chain htpy application thm}, and it works straightforwardly if $n$ is prime. However, if $n$ is not prime, then we are unable to find a {\em single} linear functional satisfying both of the above conditions, essentially because the polynomial $\frac{x^n-1}{x-1}$ fails to be irreducible over $\mathbb{Q}$ if $n$ is not prime. For composite $n$, the argument is a bit more subtle, requiring several linear functionals, one for each irreducible factor of the polynomial $\frac{x^n-1}{x-1}$, and consequently several applications of \ref{main chain htpy application thm}. 

Here is how we produce the relevant linear functionals. Every $k$-linear map $\mathfrak{gl}_n(k)^* \rightarrow k$ extends uniquely to a $k$-linear derivation $\Lambda^{\bullet}_k(\mathfrak{gl}_n(k)^*)\rightarrow \Lambda^{\bullet-1}_k(\mathfrak{gl}_n(k)^*)$, so we simply need to pick the right $k$-linear maps $\lambda,h: \mathfrak{gl}_n(k)^* \rightarrow k$.  They are as follows:

\begin{definition}\label{def of lambda and h}
Let $n$ be a positive integer, let $k$ be a field, and let $\omega$ be an $n$th root of unity (not necessarily primitive!) in $k$ satisfying $\omega\neq 1$.
Let $\lambda^{\omega}: \Lambda^{\bullet}(\mathfrak{gl}_n(k)^*)\rightarrow k$ be the $k$-linear function given by
\begin{align}
\label{def of lambda} \lambda^{\omega}(h_{i,j}) &= \sum_{\ell=1}^{i} \omega^{j+\ell}.
\end{align}

Meanwhile, let $h^{\omega}: \Lambda^{\bullet}(\mathfrak{gl}_n(k)^*)\rightarrow \Lambda^{\bullet-1}_k(\mathfrak{gl}_n(k)^*)$ be the $k$-linear graded derivation determined uniquely by its value\footnote{We apologize for how the left-hand side of the formula \eqref{def of h} features the letter $h$ playing two entirely roles. Here is our justification for this notation: the symbol $h_{i,j}$ is an element of the $h$-basis for $\Lambda^{\bullet}(\mathfrak{gl}_n(R)^{\ast})$, and the notation $h_{i,j}$ is traditional in homotopy theory. For example, as a consequence of the results of \cref{Cohomology of the singular...}, the element $h_{2,0} + h_{2,1}$ in $\Lambda^{2}(\mathfrak{gl}_2(R)^{\ast})$ is indeed responsible for the well-known (e.g. in \cite{MR0458423}) element $\zeta_2 := h_{2,0}+ h_{2,1}$ in the cohomology of the height $2$ Morava stabilizer group. Meanwhile, the symbol $h^{\omega}$ {\em without} a subscript in \eqref{def of h} is a derivation which, using Theorem \ref{main thm on models for lie algs}), will turn out to be a chain homotopy, and it is traditional to write $h$ for a chain homotopy. We occasionally write $h$ with a {\em superscript} (e.g. $h^{\omega}$) when $h$ denotes a chain homotopy, e.g. in \eqref{def of lambda} and in the statement of Theorem \ref{main thm on models for lie algs}, but when the letter $h$ has a {\em subscript} it will always denote an $h$-basis element, not a chain homotopy. We hope this avoids any possible confusion.}
\begin{align}
\label{def of h} h^{\omega}(h_{i,j}) &= \left\{ \begin{array}{ll} \omega^j \frac{\omega}{1-\omega} &\mbox{\ if\ } i=n \\ 0 &\mbox{\ otherwise}\end{array}\right.
\end{align}
on the $1$-cochains $\Lambda^{1}_k(\mathfrak{gl}_n(k)^*) = \mathfrak{gl}_n(k)^*$.
\end{definition}
Under the correspondence between $k$-linear functionals $\mathfrak{gl}_n(k)^*\rightarrow k$ and elements of $\mathfrak{gl}_n(k)$, $h^{\omega}$ corresponds to the diagonal matrix whose diagonal entry on the $j$th row is $\omega^j\frac{\omega}{1-\omega}$, e.g. $\frac{1}{2} \left[ \begin{array}{cc} 1 & 0 \\ 0 & -1 \end{array}\right]$ if $n=2$, and $\frac{1 - \omega^2}{3}\left[ \begin{array}{ccc} 1 & 0 & 0 \\ 0 & \omega & 0 \\ 0 & 0 & \omega^2 \end{array}\right]$ if $n=3$.

\begin{lemma} \label{dh+hd and lambda compatibility}
Suppose that $k$ is a finite field containing an $n$th root of unity $\omega$ satisfying $\omega\neq 1$.
For each $i,j$,
the $1$-cochain $h_{i,j}$ of $\Lambda^{\bullet}_k(\mathfrak{gl}_n(k)^*)$ is an eigenvector for the action of $dh^{\omega}+h^{\omega}d$ on $\Lambda^{1}_k(\mathfrak{gl}_n(k)^*)$, with eigenvalue $\lambda^{\omega}(h_{i,j})$.
\end{lemma}
\begin{proof}
Routine calculation.
\end{proof}

\begin{theorem}\label{main chain htpy application thm}
Suppose $k$ is a finite field of characteristic not dividing $n$, and suppose that $k$ contains a primitive $n$th root of unity. Then the inclusion $cc^{\bullet}(\mathfrak{gl}_n(k))\hookrightarrow \Lambda^{\bullet}_k(\mathfrak{gl}_n(k)^*)$ is a quasi-isomorphism.
\end{theorem}
\begin{proof}
The argument was already sketched preceding Definition \ref{def of lambda and h}. Choose a primitive $n$th root of unity $\omega$ in $k$. We apply Theorem \ref{thm on diffs giving chain htpies}, using the graded $k$-linear derivation $h^{\omega}: \Lambda^{\bullet}_k(\mathfrak{gl}_n(k)^*) \rightarrow \Lambda^{\bullet-1}_k(\mathfrak{gl}_n(k)^*)$ defined in equation \eqref{def of h} of Definition \ref{def of lambda and h}. Since $k$ is a finite field, $k = k^u$, so the action of $dh^{\omega}+h^{\omega}d$ on $\Lambda^{\bullet}_k(\mathfrak{gl}_n(k)^*)$ has the $u$-property\footnote{We let the set $I$ in the definition of the $u$-property, Definition \ref{def of u-property}, be $I = \{1\}$, since $\Lambda^{\bullet}_k(\mathfrak{gl}_n(k)^*)$ is generated as an $k$-algebra by elements in degree $1$.} if and only if $\Lambda^{1}_k(\mathfrak{gl}_n(k)^*)$ has a basis of generalized eigenvectors for the action of $dh^{\omega}+h^{\omega}d$, i.e., if and only if all the eigenvalues of $dh^{\omega}+h^{\omega}d$ in the algebraic closure $\overline{k}$ are already contained in $k$. By Lemma \ref{dh+hd and lambda compatibility}, the $h$-basis is a basis of eigenvectors for the action of $dh^{\omega}+h^{\omega}d$ on $\Lambda^{1}_k(\mathfrak{gl}_n(k)^*)$, and the eigenvalue of $dh^{\omega}+h^{\omega}d$ acting on $h_{i,j}$ is $\lambda^{\omega}(h_{i,j})$, defined in \eqref{def of lambda}, which is certainly contained in $k$.

We have $h^{\omega}\circ h^{\omega} = 0$ on $\Lambda^{1}_k(\mathfrak{gl}_n(k)^*)$ for degree reasons, and both of the conditions ``$k$ has positive characteristic'' and ``the action of $dh^{\omega}+h^{\omega}d$ on $\Lambda^{\bullet}_k(\mathfrak{gl}_n(k)^*)$ is diagonalizable'' are satisfied. So all the hypotheses of Theorem \ref{thm on diffs giving chain htpies} are satisfied, and we are in the stronger case, the situation when $dh^{\omega}+h^{\omega}d$ acts diagonalizably on $\Lambda^{\bullet}_k(\mathfrak{gl}_n(k)^*)$. So the kernel of $dh^{\omega} + h^{\omega}d$ acting on $\Lambda^{\bullet}_k(\mathfrak{gl}_n(k)^*)$ is a chain-homotopy deformation retract of $\Lambda^{\bullet}_k(\mathfrak{gl}_n(k)^*)$. From now on, we will write $\ker (dh^{\omega}+h^{\omega}d)$ for the kernel of $dh^{\omega}+h^{\omega}d$ acting on $\Lambda^{\bullet}_k(\mathfrak{gl}_n(k)^*)$ (not just on the $1$-cochains $\Lambda^{1}_k(\mathfrak{gl}_n(k)^*)$).

From the value of $\lambda^{\omega}(h_{i,j})$, we can recover the reduced internal degree of $h_{i,j}$ {\em modulo $\Irr(\omega, \mathbb{Q},p)$}, i.e., modulo the $n$th cyclotomic polynomial evaluated at $p$. If $n$ is prime, then $\Irr(\omega, \mathbb{Q},p) = \frac{p^n-1}{p-1}$, so the kernel of $dh^{\omega} - h^{\omega}d$ is precisely the critical complex, and a single application of Theorem \ref{thm on diffs giving chain htpies} finishes the proof. If $n$ is instead {\em not} prime, then we have the following argument: for each irreducible factor of the polynomial $\frac{x^n-1}{x-1}\in\mathbb{Q}[x]$, choose a root $\omega_i$ of that factor in $k$. Write $\omega_1, \dots ,\omega_m$ for the resulting set of roots of unity in $k$. For each $i=1, \dots ,m$, Theorem \ref{thm on diffs giving chain htpies} yields that the sub-DGA $\ker ( dh^{\omega_i} + h^{\omega_i}d) \subseteq \Lambda^{\bullet}_k(\mathfrak{gl}_n(k)^*)$ is a chain-homotopy deformation retract. 

In general, if $A^{\bullet},B^{\bullet}$ are sub-DGAs of some DGA $C^{\bullet}$ such that the inclusions $A^{\bullet}\subseteq C^{\bullet}$ and $B^{\bullet}\subseteq C^{\bullet}$ are chain-homotopy equivalences, it is not necessarily true that the intersection $A^{\bullet}\cap B^{\bullet}$ is chain-homotopy equivalent to $C^{\bullet}$. However, in our situation, we can use the grading to our advantage: $\ker ( dh^{\omega_i} + h^{\omega_i}d)$ is the sub-DGA of $\Lambda^{\bullet}_k(\mathfrak{gl}_n(k)^*)$ consisting of all elements of reduced internal degree divisible by the irreducible polynomial of $\omega_i$ evaluated at $p$. Since the chain-homotopy equivalences furnished by each application of Theorem \ref{thm on diffs giving chain htpies} respect the internal grading, the intersection $\bigcap_{i=1}^m \ker (dh^{\omega_i} + h^{\omega_i}d)$ {\em is} still chain-homotopy equivalent to $\Lambda^{\bullet}_k(\mathfrak{gl}_n(k)^*)$. 

By construction, the intersection $\bigcap_{i=1}^m \ker (dh^{\omega_i} + h^{\omega_i}d)$ has $k$-linear basis consisting of the monomials in the $h$-basis for $\Lambda^{\bullet}_k(\mathfrak{gl}_n(k)^*)$ whose reduced internal degree is divisible by $\frac{p^n-1}{p-1}$, i.e., 
$\bigcap_{i=1}^m \ker (dh^{\omega_i} + h^{\omega_i}d) = cc^{\bullet}(\mathfrak{gl}_n(k))$ as a sub-DGA of $\Lambda^{\bullet}_k(\mathfrak{gl}_n(k)^*)$.
\end{proof}

\subsection{The descent problem for $H^*(U(n);k)$}
\label{The descent problem}

In this section we show that, if $K/k$ is a finite extension of fields and $A$ is a graded $k$-algebra such that $A\otimes_k K$ is isomorphic to $H^*(U(n); K)$ as a graded $K$-algebra, then under mild hypotheses, $A \cong H^*(U(n);k)$ as a graded $k$-algebra. This is a straightforward application of standard techniques in Galois descent; see chapter 17 of \cite{MR547117} for an excellent exposition. The computation of the relevant cohomology group is not difficult, but takes a few steps. Since we do not know of a place in the literature where this result or the relevant calculations appear, we present the relevant calculations in this section, although we suspect everything in this section must be already ``well-known to those who know it well.''

\begin{prop}\label{descent prop}
Let $d$ be a positive integer, and let $p$ be a prime which does not divide $d$. 
Let $K/k$ be a degree $d$ cyclic Galois extension of characteristic $p$, and let $V$ be a graded $k$-vector space in which all elements are in positive odd degrees and in which, for each integer $i$, the degree $i$ summand of $V$ is finite-dimensional.
Write $\Lambda_k(V)$ for the exterior $k$-algebra on the vector space $V$.
If $A$ is a graded $k$-algebra such that $A\otimes_k K$ is isomorphic to $\Lambda_k(V)\otimes_k K$ as a graded $K$-algebra, then $A$ is isomorphic to $\Lambda_k(V)$ as a graded $k$-algebra. 
\end{prop}
\begin{proof}
For any graded $k$-algebra $B$,
let $\Aut(B\otimes_k K)$ denote the automorphism group of $B\otimes_k K$ in the category of graded $K$-algebras. One knows from classical descent theory (e.g. chapters 17 and 18 of \cite{MR547117}) that the pointed set\linebreak $H^1(\Gal(K/k); \Aut(B\otimes_k K))$ is in bijection with the set of isomorphism classes of graded $k$-algebras $A$ such that $B\otimes_k K$ is isomorphic to $A\otimes_kK$ as a graded $K$-algebras.

The case at hand is $B = \Lambda_k(V)$. Of course $\Lambda_k(V)\otimes_k K \cong \Lambda_K(V)$.
Since an exterior algebra on a set of generators in odd grading degrees is the free graded-commutative algebra on those generators, we can uniquely specify a graded $k$-algebra endomorphism of $\Lambda_k(V)$ by specifying a grading-preserving $k$-linear map $V \rightarrow \Lambda_k(V)$, and any such $k$-linear map yields a graded $k$-algebra endomorphism. 

The set of degree $i$ elements of the automorphism group $\Aut(\Lambda_k(V)\otimes_k K)$ consists of the $K$-linear automorphism group of of $V^i\otimes_k K$ extended by $K$-linear endomorphisms of exterior products of lower-degree homogeneous elements of $V\otimes_k K$. That is, $\Aut(\Lambda_k(V)\otimes_k K)^i$ sits in an extension of nonabelian $\Gal(K/k)$-modules
\begin{equation}\label{ses 3309} 1 \rightarrow W_i \rightarrow \Aut(\Lambda_k(V)\otimes_k K)^i \rightarrow GL(V^i\otimes_k K) \rightarrow 1\end{equation}
where $W_i$ is a $K$-vector space with some action of $\Gal(K/k)$. 
The Galois cohomology set $H^1(\Gal(K/k);W_i)$ is trivial since $p$ does not divide the order of $\Gal(K/k)$, and the Galois cohomology set $H^1(\Gal(K/k);GL(V^i\otimes_k K))$ is trivial by a nonabelian version of Hilbert 90, e.g. Proposition 3 of Chapter X of \cite{MR0554237}. By the exact sequence of pointed sets induced in nonabelian Galois cohomology by \eqref{ses 3309}, $H^1(\Gal(K/k); \Aut(\Lambda_k(V)\otimes_k K)^i)$ is trivial. This is true for all integers $i$, so $H^1(\Gal(K/k); \Aut(\Lambda_k(V)\otimes_k K))$ is trivial, i.e., every $K/k$-twist of $\Lambda_k(V)$ is trivial.
\end{proof}


As a consequence of Proposition \ref{gln cohomology}, Theorem \ref{main chain htpy application thm} and Proposition \ref{descent prop}\footnote{To be completely explicit: we apply Proposition \ref{descent prop} in the case where the field extension $K/k$ is $\mathbb{F}_p(\omega)/\mathbb{F}_p$, where $\omega$ is a primitive $n$th root of unity. The degree $d = [K : k]$ is the least integer $m$ such that $n$ divides $p^m-1$. For the hypotheses of Proposition \ref{descent prop} to be satisfied, we only need $p$ to fail to divide that integer $d$. It suffices to let $p>\phi(n)$, since $d$ is a divisor of the degree of the $n$th cyclotomic polynomial, i.e., $\phi(n)$, but of course the hypotheses of Proposition \ref{descent prop} are also satisfied for many pairs $p,n$ with $p\leq \phi(n)$, as well.}, we have:
\begin{theorem}\label{gln and cc cohomology}
Let $n$ be a positive integer, and let $p$ be a prime such that $p > \phi(n)$ and such that $p$ does not divide $n$. Then we have an isomorphism of graded rings
\begin{align*}
 H^*\left(cc^{\bullet}(\mathfrak{gl}_n(\mathbb{F}_p))\right)
  &\cong H^*(\mathfrak{gl}_n(\mathbb{F}_p);\mathbb{F}_p)
\end{align*}
between the cohomology of the critical complex of $\mathfrak{gl}_n(\mathbb{F}_p)$ and the cohomology of the Lie algebra $\mathfrak{gl}_n(\mathbb{F}_p)$.
\end{theorem}
To be clear, the constraints $p>\phi(n)$ and $p\nmid n$ could be omitted from the statement of Theorem \ref{gln and cc cohomology} if it were possible to prove Theorem \ref{main chain htpy application thm} without adjoining a primitive $n$th root of unity to $\mathbb{F}_p$. We do not know whether this is possible, however.

\section{Monodromy and invariant cycles in a family of deformations of Ravenel's model}
\label{Monodromy section}

\subsection{Deformations of Ravenel's Lie algebra model for the Morava stabilizer group}
\label{Deformations of Ravenel's model.}

\begin{definition}\label{def of Lambda eps}
Let $R$ be a commutative ring, let $n$ be a positive integer, let $p$ be a prime number, and let $\epsilon \in R$.
\begin{itemize}
\item We write $\Lambda^{\bullet}_{n,\epsilon}$ for the differential graded $R$-algebra given by the exterior $R$-algebra
\begin{align*} \Lambda^{\bullet}_{n,\epsilon} &= \Lambda_R(h_{i,j}: i,j\in Z_n),\end{align*}
with each generator $h_{i,j}$ in cohomological degree $1$, and with differential
\begin{align}
\label{lambda diff formula} d(h_{i,j}) &= \sum_{\ell =1}^{i-1} h_{\ell,j}h_{i-\ell,j+\ell} + \epsilon\sum_{\ell=i}^n h_{\ell,j}h_{i-\ell+n,j+\ell}. \end{align}
We equip $\Lambda^{\bullet}_{n,\epsilon}$ with an additional $\mathbb{Z}/2(p^n-1)\mathbb{Z}$-grading, called the {\em internal grading}, given by letting the internal degree of $h_{i,j}$ be $2(p^i-1)p^j$. 
\end{itemize}
\end{definition}
The differential \eqref{lambda diff formula} increases cohomological degree by one, and preserves the internal degree.

\begin{definition}\label{crit complex def-prop}
Let $n$ be a positive integer, let $R$ be a commutative ring, and let $\epsilon\in R$.
\begin{itemize}
\item By the {\em critical complex} we mean the differential graded $R$-subalgebra $cc_{n,\epsilon}^{\bullet}$ of $\Lambda^{\bullet}_{n,\epsilon}$ consisting of all linear combinations of bihomogeneous elements of internal degree zero.  
\item By the {\em first-subscript complex} we mean the differential graded $R$-subalgebra $FSC^{\bullet}$ of $\Lambda^{\bullet}_{n,\epsilon}$ consisting of all linear combinations of bihomogeneous elements whose first subscripts, in the $h$-basis, sum to zero modulo $n$. 
\end{itemize}
\end{definition}

For example, when $n=3$, the element $h_{10}h_{20}$ is in the first-subscript complex, but not in the critical complex. This is because $h_{10}h_{20}$ has internal degree $2(p^2 + p - 2)$, which is not divisible by $2(p^3-1)$. Meanwhile, again when $n=3$, the element $h_{10}h_{21}$ is in both the first-subscript complex and the critical complex.

Setting $\epsilon$ to zero, formula \eqref{lambda diff formula} recovers Ravenel's formula for the dual of the Lie bracket in $L(n,n)$ defined in \cref{The reduction to Lie algebra cohomology}, originally from \cite{ravenel1977cohomology} (see also \cite[Theorem 6.3.8]{MR860042}). Hence the differential graded algebra $\Lambda_{n,0}^{\bullet}$ is the Chevalley-Eilenberg complex of Ravenel's Lie model $L(n,n)$. The internal grading on $\Lambda_{n,0}^{\bullet}$ is chosen so that it agres with the internal grading in Ravenel's spectral sequence \eqref{rmss}, hence also the internal grading on the cohomology of the Morava stabilizer group. 

We may also set $\epsilon$ to nonzero values in the ground ring $R$. Considering $\Lambda_{n,\epsilon}^{\bullet}$ as a function in the variable $\epsilon$, we may think of $\epsilon \mapsto \Lambda_{n,\epsilon}^{\bullet}$ as a one-parameter family of derived deformations of Ravenel's model for the Morava stabilizer group, parameterized by $\mathbb{A}^1_R$. 

This family of derived deformations can also be approached from a non-derived perspective. 
Each of the differential graded algebras $\Lambda_{n,\epsilon}^{\bullet}$ is a finite-dimensional exterior algebra generated by $1$-cochains. Hence, by Chevalley-Eilenberg \cite{MR0024908}, $\Lambda_{n,\epsilon}^{\bullet}$ is naturally isomorphic to the Chevalley-Eilenberg DGA of some Lie algebra, which we will call $L_{\epsilon}(n)$. Consequently the parameterized family of DGAs $\epsilon \mapsto \Lambda_{n,\epsilon}^{\bullet}$ is the fiberwise Chevalley-Eilenberg DGA of the parameterized family of Lie algebras $\epsilon \mapsto L_{\epsilon}(n)$ on $\mathbb{A}^1_R$.

Nevertheless, it is the derived point of view that is more useful for our purposes in this paper. By restricting each fiber $\Lambda_{n,\epsilon}^{\bullet}$ to its critical complex, we get a parameterized family of DGAs on $\mathbb{A}^1_R$ which is smooth away from $\epsilon=0$. It is this parameterized family of DGAs that we will ultimately need to understand, since it is our task in this section to prove that the $\epsilon=0$ critical complex $cc_{n,0}^{\bullet} = cc^{\bullet}(L_0(n))\cong cc^{\bullet}(L(n,n))$ is quasi-isomorphic to the $\epsilon=1$ critical complex $cc_{n,1}^{\bullet} = cc^{\bullet}(L_1(n))$. In other words, 
\begin{itemize}
\item even though the family of DGAs $\epsilon \mapsto \Lambda_{n,\epsilon}^{\bullet}$ is singular at $\epsilon=0$, 
\item and the cohomology of $\Lambda_{n,0}^{\bullet}$ is {\em not} isomorphic to the cohomology of $\Lambda_{n,\epsilon}^{\bullet}$ for $\epsilon=0$,
\item if we restrict to the family of smaller DGAs $\epsilon \mapsto cc_{n,\epsilon}^{\bullet}$ inside $\Lambda_{n,\epsilon}^{\bullet}$, we {\em do} get an isomorphism $H^*(cc_{n,\epsilon}^{\bullet}) \cong H^*(cc_{n,0}^{\bullet})$ for all $\epsilon$.
\end{itemize}
While the parameterized family of DGAs $\epsilon\mapsto \Lambda_{n,\epsilon}^{\bullet}$ is the fiberwise Chevalley-Eilenberg DGAs of the parameterized family of Lie algebras $\epsilon\mapsto L_{\epsilon}(n)$, it is not the case that the parameterized family of DGAs $\epsilon\mapsto cc^{\bullet}_{n,\epsilon}$ is the fiberwise Chevalley-Eilenberg DGA of any parameterized family of Lie algebras. This is because any Chevalley-Eilenberg DGA is generated in cohomological degree $1$, but for $n>1$, $cc^{\bullet}_{n,\epsilon}$ is not generated in cohomological degree $1$. For example, while $h_{1,n-1}h_{n-1,0}\in cc^2_{n,\epsilon}\subseteq \Lambda_{n,\epsilon}^2$ is a product of $1$-cochains in $\Lambda_{n,\epsilon}^{\bullet}$, the $1$-cochains $h_{1,n-1}$ and $h_{n-1,0}$ in $\Lambda_{n,\epsilon}^{\bullet}$ are not in $cc^{\bullet}_{n,\epsilon}$ for $n>1$, and $h_{1,n-1}h_{n-1,0}$ is an indecomposable $2$-cochain in $cc^{\bullet}_{n,\epsilon}$ for all $n>1$.

As already noted, the fiber $L_0(n)$ of our parameterized family of Lie algebras at $\epsilon=0$ is Ravenel's model for the height $n$ Morava stabilizer group. We mentioned the fiber $L_1(n)$ at $\epsilon=1$ in the previous paragraph. The reason the fiber at $1$ is significant is that the Lie algebra $L_1(n)$ is isomorphic to the Lie algebra $\mathfrak{gl}_n(R)$ of $n$-by-$n$ matrices in $R$ with commutator bracket as the Lie bracket, by Proposition \ref{def of Adot}. For general $\epsilon\neq 0$, the Lie algebra $L_{\epsilon}(n)$ is still (nontrivially) isomorphic to $\mathfrak{gl}_n(R)$. We prove this below, in Corollary \ref{isos exist cor}, by constructing a differential graded {\em connection} on the bundle of DGAs $\epsilon\mapsto \Lambda_{\epsilon,n}^{\bullet}$ restricted to the punctured affine line $\mathbb{A}^1_R\backslash \{0\}$. Parallel transport in this connection furnishes DGA isomorphisms $\Lambda_{\epsilon,n}^{\bullet} \cong \Lambda_{1,n}^{\bullet}$ for all $\epsilon\neq 0$, and consequently Lie algebra isomorphisms $L_{\epsilon}(n) \cong L_{1}(n)\cong \mathfrak{gl}_n(R)$ for all $\epsilon \neq 0$, as well as monodromy representations which play a pivotal role in the main theorems in this paper.


\begin{definition}\label{def of Lambda eps 2}
Let $R,n,p,\epsilon$ be as in Definition \ref{def of Lambda eps}.
\begin{itemize}
\item
We refer to the $R$-linear basis $\{ h_{i,j}: i,j\in Z_n\}$ for $\Lambda^1_{n,\epsilon}$ as the {\em $h$-basis} for $\Lambda^1_{n,\epsilon}$. More generally, we refer to the $R$-linear basis $\{ h_{i_1,j_1}\wedge \dots \wedge h_{i_{\ell},j_{\ell}}\}$ for $\Lambda^{\ell}_{n,\epsilon}$ as the {\em $h$-basis} for $\Lambda^{\ell}_{n,\epsilon}$. 
\item We write $L_{\epsilon}(n)$ for the Lie $R$-algebra such that $\Lambda^{\bullet}_{n,\epsilon}$ is the Chevalley-Eilenberg complex of $L_{\epsilon}(n)$. 
\item
We write $\sigma$ for the differential graded $R$-algebra automorphism of $\Lambda^{\bullet}_{n,\epsilon}$ given by $\sigma(h_{i,j}) = h_{i,j+1}$. 
This automorphism $\sigma$ generates an action of the cyclic group $C_n$ on $\Lambda^{\bullet}_{n,\epsilon}$ given by cyclically permuting the second subscript in the $h$-basis. We write ${}_\sigma \Lambda_{n,\epsilon}^{\bullet}$ for the {\em $C_n$-equivariant} differential graded $R$-algebra given by $\Lambda_{n,\epsilon}^{\bullet}$ with $C_n$ acting by $\sigma$. 
\item
When $R\rightarrow S$ is a homomorphism of commutative rings and $\phi$ is an element of order $n$ in the group of automorphisms $\Aut_{R-alg}(S)$ of  $S$ which fix $R$, we write $\tilde{\sigma}$ for the $\phi$-semilinear action of $\sigma$ on $\Lambda^{\bullet}_{n,\epsilon}\otimes_R S$. That is, 
\begin{itemize}
\item $\tilde{\sigma}$ is $R$-linear (but not $S$-linear, unless $\phi=\id$), 
\item the action of $\tilde{\sigma}$ on $\Lambda^{\bullet}_{n,\epsilon}\subseteq \Lambda^{\bullet}_{n,\epsilon}\otimes_R S$ coincides with the action of $\sigma$ on $\Lambda^{\bullet}_{n,\epsilon}$,
\item and $\tilde{\sigma}(xy) = \phi(x)\tilde{\sigma}(y)$ for any $x\in S$ and any $y\in \Lambda^{\bullet}_{n,\epsilon}\otimes_R S$.\end{itemize}
\end{itemize}
We write ${}_{\tilde{\sigma}} \Lambda_{n,\epsilon}^{\bullet}$ for the $C_n$-equivariant differential graded $S$-algebra given by $\Lambda_{n,\epsilon}^{\bullet} \otimes_R S$ with $C_n$ acting by $\tilde{\sigma}$. 
\end{definition}
The most important case of $\tilde{\sigma}$ is the case where $R$ is the finite field $\mathbb{F}_p$, and $S$ is its extension field $\mathbb{F}_{p^n}$, and $\phi$ is the Frobenius endomorphism of $\mathbb{F}_{p^n}$. 

\begin{example}\label{deformed Lie bracket special case} If we let $n = 2$ in Definition \ref{def of Lambda eps}, then the Lie $R$-algebra $L_{\epsilon}(2)$ has underlying $R$-module $M_2(R)$, the $R$-module of $2 \times 2$ matrices with entries in $R$, and Lie bracket 
\begin{align*}
\left[ \begin{bmatrix}
    a & b \\
    c & d
\end{bmatrix}, \begin{bmatrix}
    e & f \\
    g & h
\end{bmatrix} \right]_{\epsilon} := \begin{bmatrix}
    bg - cf & \epsilon(af + bh - be - df) \\
    \epsilon(ce + dg - ag - ch) & cf - bg
\end{bmatrix},
\end{align*}
i.e., the commutator bracket but with the off-diagonal entries scaled by a factor of $\epsilon$.

When $\epsilon = 1$, $L_{\epsilon}(2)$ coincides with the classical $\mathfrak{gl}_2(R)$. Regarding $L_{\epsilon}(2)$ as a function of $\epsilon$, the function $\epsilon \mapsto L_{\epsilon}(2)$ is a family of Lie $R$-algebras parametrized over the affine line $\mathbb{A}_R^1$, singular only at the point $\epsilon = 0$. The fiber over the singular point is the solvable Lie algebra $L(2,2)$ of Ravenel \cite{ravenel1977cohomology},\cite{MR860042}, while the fiber over each point $\epsilon \neq 0$ is the reductive Lie algebra $L_{\epsilon}(2)$, isomorphic to $\mathfrak{gl}_2(R)$.   
\end{example}

\begin{prop}\label{critical complex is a subset of first-subscript complex} Let $n$, $R$ be as in Definition \ref{crit complex def-prop} and let $p$ be a prime such that $p > 2n^2$. Then the critical complex is a subset of the first-subscript complex.
\end{prop}
\begin{proof}   
We introduce the following notation: given a nonzero element $h_{i_1,j_1}h_{i_2,j_2} \cdots h_{i_m,j_m}$ of $\Lambda^{\bullet}_{n}$, its associated {\em angle-bracket symbol} is an $n$-tuple of integers, written $\langle a_1, a_2, \dots ,a_n\rangle$, uniquely determined by the following rules:
\begin{itemize}
\item The angle-bracket symbol of $h_{i,j}$ is $\langle a_1, a_2, \dots ,a_n\rangle$, with $a_j=-1$ and $a_{i+j}=1$, and all other $a_k$ entries equal to zero. Here the subscript $a_k$ is to be understood as being defined modulo $n$, so that, for example, $a_{2+(n-1)} = a_1$.
\item The angle-bracket symbol of a product $x\cdot y$ is equal to the angle-bracket symbol of $x$ plus the angle-bracket symbol of $y$.
\end{itemize}
So, for example, if $n=3$, then the angle-bracket symbol of $h_{10}$ is $\langle -1,1,0\rangle$, the angle-bracket symbol of $h_{21}$ is $\langle 1,-1,0\rangle$, and the angle-bracket symbol of $h_{10}h_{21}$ is $\langle 0,0,0\rangle$.   

Our proof of the claim follows from several observations about angle-bracket symbols:
\begin{enumerate}
\item The internal degree of an element $x = h_{i_1,j_1}h_{i_2,j_2} \cdots h_{i_m,j_m}$, reduced modulo $2(p^n-1)$, depends only on the angle-bracket symbol of $x$. This is because
\begin{itemize}
\item the angle-bracket symbol of $h_{i,j}$ has a single entry which is $-1$, and this entry is in the $j$th coordinate,
\item the angle-bracket symbol of $h_{i,j}$ has a single entrry which is $1$, and this entry is in the $(i+j)$th coordinate, 
\item the internal degree of $h_{i,j}$ is $2p^j(p^i-1)$, i.e., the internal degree of $h_{i,j}$ is determinable modulo $2(p^n-1)$ from the angle-bracket symbol of $h_{i,j}$,
\item and both the angle-bracket symbol and the internal degree are additive on products of elements of the form $h_{i,j}$.
\end{itemize}
\item In particular, if the angle-bracket symbol of $x$ is $\langle 0,0,0, \dots ,0\rangle$, then $x$ is in the critical complex.
\item The converse is also true if $p>>n$, and in particular, if $p>2n^2$. This requires some explanation. Since $\Lambda^{\bullet}_n$ is an exterior algebra, in a nonzero element $h_{i_1,j_1}h_{i_2,j_2} \cdots h_{i_m,j_m}$ the pairs $(i_1,j_1), (i_2, j_2), \dots ,(i_m,j_m)$ must be pairwise distinct. Consequently each entry in the angle-bracket symbol of $h_{i_1,j_1}h_{i_2,j_2} \cdots h_{i_m,j_m}$ is contained in the closed interval $[-m,m]$. 

Since $\Lambda^{\bullet}_{n}$ is $n^2$-dimensional, we have the bound $n^2\geq m$. The assumption that $p>2n^2$ together with the bound $n^2\geq m$ entails the inequality $p>2m$. The inequality $p>2m$ ensures that each residue class modulo $p^n-1$ has {\em at most one} expression as a linear combination of the powers $p^0,p^1, \dots ,p^{n-1}$ of $p$ in which the coefficients lie in the interval $[-m,m]$. Consequently, if $h_{i_1,j_1}h_{i_2,j_2} \cdots h_{i_m,j_m}$ is in the critical complex, then its angle-bracket symbol must be zero.
\item The sum of the first subscripts $i_1 + i_2 + \dots + i_n$ can be recovered, modulo $n$, from the angle-bracket symbol of $h_{i_1,j_1}h_{i_2,j_2} \cdots h_{i_m,j_m}$. Given the angle-bracket symbol $\langle a_1, \dots ,a_{n}\rangle$, one simply takes the linear combination $\sum_{k=1}^{n}k\cdot a_k$ and reduces it modulo $n$. To see that the resulting residue class agrees with the residue class of $i_1 + i_2 + \dots + i_n$ modulo $n$, one can carry out an easy induction:
\begin{itemize}
\item check that the two agree when the angle-bracket symbol in question is the angle-bracket symbol of a single element $h_{i,j}$,
\item then observe that both are additive on products of elements of the form $h_{i,j}$.
\end{itemize}
\end{enumerate}
With the above observations in hand, the proof is very easy: given an element $h_{i_1,j_1}h_{i_2,j_2} \cdots h_{i_m,j_m}$ of the critical complex, its angle-bracket symbol must be zero. Consequently $\sum_{k=1}^m i_k$ must be zero modulo $n$, i.e., $h_{i_1,j_1}h_{i_2,j_2} \cdots h_{i_m,j_m}$ is in the first-subscript complex.
\end{proof}
\begin{remark}\label{remark on prime bound}
The bound $p>2n^2$ in Proposition \ref{critical complex is a subset of first-subscript complex} is not sharp: for example, the conclusion of Proposition \ref{critical complex is a subset of first-subscript complex} is true for $p=2$ and $1\leq n\leq 3$, even though the inequality $p>2n^2$ is not satisfied in those cases. However, the bound cannot be removed entirely, or the resulting claim is false. For example, if $p=2$ and $n=4$, the element $h_{20}h_{21}h_{30}h_{31}$ is in the critical complex but not in the first-subscript complex. 

By Proposition \ref{critical complex is a subset of first-subscript complex} together with brute-force computer calculation to handle values of $p$ which are $\leq 2n^2$, one gets the following:
\begin{itemize}
\item $cc^{\bullet}_n\subseteq FSC^{\bullet}_n$ for heights $n\leq 3$ and for all $p$.
\item $cc^{\bullet}_4\subseteq FSC^{\bullet}_4$ iff $p>2$.
\item $cc^{\bullet}_5\subseteq FSC^{\bullet}_5$ iff $p>3$. A counterexample to the containment when $p=2$ is $h_{24}h_{14}h_{42}$. A counterexample when $p=3$ is $h_{34}h_{43}h_{11}h_{14}$.
\end{itemize}
\end{remark}

\begin{definition}\label{def of h-diagonal}
Given elements $\delta,\epsilon\in R$, we say that a homomorphism of differential graded $R$-algebras $f: \Lambda^{\bullet}_{n,\epsilon} \rightarrow \Lambda^{\bullet}_{n,\delta}$ is {\em $h$-diagonal} if the $h$-basis diagonalizes the action of $f$ on the $1$-cochains $\Lambda^{1}_{n,\epsilon} \rightarrow \Lambda^{1}_{n,\delta}$. That is, $f$ is $h$-diagonal if and only if, for each $i$ and $j$, $f(h_{i,j})$ is an $R$-linear multiple of $h_{i,j}$.
\end{definition}


\begin{prop}\label{h-diag isos}
Let $\delta,\epsilon$ be nonzero elements of a field $k$. Let $k$ be the ground ring $R$ in the definition of $\Lambda^{\bullet}_{n,\epsilon}$ in Definition \ref{def of Lambda eps}. 
\begin{itemize}
\item 
Then the set of $h$-diagonal isomorphisms 
\begin{align}
\label{h-diag iso 2308}
\Lambda^{\bullet}_{n,\epsilon} &\rightarrow \Lambda^{\bullet}_{n,\delta}\end{align} is in bijection with the set of zeroes of the polynomial $\frac{\delta}{\epsilon} - \prod_{i\in Z_n}x_i\in k[x_1, \dots ,x_n]$ in $k^n$.
\item
Furthermore, the set of $\sigma$-equivariant $h$-diagonal isomorphisms $\Lambda^{\bullet}_{n,\epsilon} \rightarrow \Lambda^{\bullet}_{n,\delta}$ is in bijection with the set of zeroes of the polynomial $\frac{\delta}{\epsilon} - x^n\in k[x]$ in $k$. 
\item
Suppose furthermore that $K$ is a field extension of $k$, and suppose that $K$ is perfect and that the $q$-Frobenius map 
\begin{align*}
 \phi: K&\rightarrow K \\
 \phi(x)&= x^q 
\end{align*}
 is an element of order $n$ in $\Gal(K/k)$.
Then the set of $\tilde{\sigma}$-equivariant $h$-diagonal isomorphisms $\Lambda^{\bullet}_{n,\epsilon}\otimes_k K \rightarrow \Lambda^{\bullet}_{n,\delta}\otimes_k K$ is in bijection with the set of zeroes of the polynomial $\frac{\delta}{\epsilon} - x^{(q^n - 1)/(q-1)}\in k[x]$ in $k$. 
\end{itemize}
\end{prop}
\begin{proof}
An $h$-diagonal morphism $\Lambda^{\bullet}_{n,\epsilon} \rightarrow \Lambda^{\bullet}_{n,\delta}$ is determined by specifying, for each $h_{i,j}$, the scalar $\alpha_{i,j}\in k$ such that $f(h_{i,j}) = \alpha_{i,j}h_{i,j}$. $f$ is a homomorphism between exterior algebras over $k$, each generated in degree $1$, it is clear that $f$ is a homomorphism of graded $k$-algebras, no matter what choices are made for $\alpha_{i,j}$. To verify that $f$ is a morphism of DGAs, we need only verify that $f$ commutes with the differential. This is a straightforward calculation, writing $d_{\epsilon},d_{\delta}$ for the differentials on $\Lambda^{\bullet}_{n,\epsilon}$ and on $\Lambda^{\bullet}_{n,\delta}$, respectively:
\begin{align*}
 \sum_{\ell =1}^{i-1} \alpha_{\ell,j}\alpha_{i-\ell,j+\ell}h_{\ell,j}h_{i-\ell,j+\ell} \ \ \ \ \ \ \  \ \ \ \ \ \ \  & \\ \ \ \ \ \ \ \  + \epsilon\sum_{\ell=i}^n \alpha_{\ell,j}\alpha_{i-\ell+n,j+\ell} h_{\ell,j}h_{i-\ell+n,j+\ell} 
  &= f\left( d_{\epsilon}(h_{i,j})\right) \\ 
  &= d_{\delta}\left( f(h_{i,j})\right) \\
  &= \alpha_{i,j}\sum_{\ell =1}^{i-1} h_{\ell,j}h_{i-\ell,j+\ell} + \alpha_{i,j}\delta\sum_{\ell=i}^n h_{\ell,j}h_{i-\ell+n,j+\ell} ,
\end{align*}
yielding the equations
\begin{align}
\label{eq fp34n} \alpha_{\ell,j}\alpha_{i-\ell,j+\ell} &= \alpha_{i,j} \mbox{\ if\ } 1\leq \ell <i ,\mbox{\ \ and}\\
\label{eq fp34o} \epsilon\alpha_{\ell,j}\alpha_{i-\ell,j+\ell} &= \delta \alpha_{i,j} \mbox{\ if\ } i\leq \ell \leq n \mbox{\ and\ } i\neq n.
\end{align}
Using equation \eqref{eq fp34n}, we get
\begin{itemize}
\item $\alpha_{2,j} = \alpha_{1,j}\alpha_{1,j+1}$ for all $j$,
\item $\alpha_{3,j} = \alpha_{1,j}\alpha_{2,j+1} = \alpha_{1,j}\alpha_{1,j+1}\alpha_{1,j+2}$, for all $j$,
\item and so on, arriving by induction at $\alpha_{i,j} = \alpha_{1,j} \dots \alpha_{1,j+i-1}$ for all $i,j$.
\end{itemize}
Consequently all the parameters $\alpha_{i,j}$ are determined uniquely by the parameters $\alpha_{1,1}, \dots ,\alpha_{1,n}$. Write $x_j$ for $\alpha_{1,j}$. The only restriction on the parameters $x_1, \dots ,x_n$ is imposed by equation \eqref{eq fp34o}, which requires (in the case $i=1=\ell$) that 
\begin{align*}
 \alpha_{1,j} 
  &= \frac{\epsilon}{\delta} \alpha_{1,j}\alpha_{n,j+1} \\
  &= \frac{\epsilon}{\delta} \alpha_{1,j}\alpha_{1,j+1} \dots \alpha_{1,j+n} \\
  &= \frac{\epsilon}{\delta} \alpha_{1,j}^2\alpha_{1,j+1} \dots \alpha_{1,j+n-1} ,
\end{align*}
hence $\alpha_{1,j} = \frac{\epsilon}{\delta} \alpha_{1,j} x_1 \dots x_n$ and the product $x_1\dots x_n$ is equal to $\frac{\delta}{\epsilon}$ (where $\alpha_{1,j} \neq 0$ since $\alpha_{1,j} = 0$ would imply that $h_{1,j}$ is in the kernel of $f$, disqualifying $f$ from being an $h$-diagonal isomorphism), as claimed.

The action of $\sigma$ cyclically permutes the second subscript on the $h$-basis. Consequently $f$ is $\sigma$-equivariant if and only if $\alpha_{i,j} = \alpha_{i,j+1}$ for all $i,j$, i.e., $x_j = x_{j+1}$ for all $j$. So one gives a $\sigma$-equivariant $h$-diagonal isomorphism \eqref{h-diag iso 2308} simply by making a choice of element $x_1\in k$ such that $x_1^n = \frac{\delta}{\epsilon}$.

Similarly, since the action of $\tilde{\sigma}$ sends $\alpha h_{i,j}$ to $\alpha^q h_{i,j+1}$, the map $f$ is $\tilde{\sigma}$-equivariant if and only if $\alpha_{i,j}^q = \alpha_{i,j+1}$ for all $i,j$, i.e., $x_j^q = x_{j+1}$ for all $j$. So one gives a $\tilde{\sigma}$-equivariant $h$-diagonal isomorphism \eqref{h-diag iso 2308} simply by making a choice of element $x_1\in k$ such that $x_1\cdot x_1^q \cdot \dots \cdot x_1^{q^{n-1}} = \frac{\delta}{\epsilon}$.
\end{proof}

\begin{corollary}\label{isos exist cor}
Let $\delta,\epsilon$ be units in the commutative ring $R$. 
\begin{itemize}
\item Suppose that $R$ contains an $n$th root of $\delta/\epsilon$. Then the $C_n$-equivariant differential graded $R$-algebras ${}_\sigma \Lambda^{\bullet}_{n,\epsilon}$ and ${}_\sigma \Lambda^{\bullet}_{n,\delta}$ are isomorphic to one another.
\item Suppose that $R$ contains a $\frac{q^{n}-1}{q-1}$th root of $\delta/\epsilon$. Then the $C_n$-equivariant differential graded $S$-algebras ${}_{\tilde{\sigma}} \Lambda^{\bullet}_{n,\epsilon}$ and ${}_{\tilde{\sigma}} \Lambda^{\bullet}_{n,\delta}$ are isomorphic to one another.
\end{itemize}
\end{corollary}  

Suppose the commutative ring $R$ is a field. 
We can think of $\epsilon\mapsto \Lambda^{\bullet}_{n,\epsilon}$ as a bundle of differential bigraded $R$-algebras over the punctured affine line $\mathbb{A}_R^1\setminus\{0\} = \Spec R[x^{\pm 1}]$. As a vector bundle, $\epsilon\mapsto \Lambda^{\bullet}_{n,\epsilon}$ is the trivial bundle of rank $n^2$. However, as we move from fiber to fiber, the differential changes, so as a bundle of DGAs, $\epsilon\mapsto \Lambda^{\bullet}_{n,\epsilon}$ is nontrivial. 
A nice way to think of the bundle $\epsilon\mapsto \Lambda^{\bullet}_{n,\epsilon}$ of differential bigraded $R$-algebras is as a single differential bigraded $R[x]$-algebra $\Lambda_n^{\bullet}$:

\begin{definition}\label{def of lambda punc}
Let $R$ be a commutative ring, let $p$ be a prime, and let $n$ be a positive integer. We write $\Lambda^{\bullet}_{n}$ for the differential bigraded $R[x]$-algebra given by the exterior $R[x]$-algebra
\begin{align*} \Lambda^{\bullet}_{n} &= \Lambda_{R[x]}(h_{i,j}: i,j\in Z_n),\end{align*}
with each generator $h_{i,j}$ in cohomological degree $1$ and internal degree $2(p^i-1)p^j$, and with differential
\begin{align*}
 d(h_{i,j}) &= \sum_{\ell =1}^{i-1} h_{\ell,j}h_{i-\ell,j+\ell} + x\sum_{\ell=i}^n h_{\ell,j}h_{i-\ell+n,j+\ell}. \end{align*}
Again, there is an evident action of the cyclic group $C_n$ on $\Lambda^{\bullet}_{n}$ given by $\sigma(h_{i,j}) = h_{i,j+1}$.

We will write $\Lambda^{\bullet}_{n,punc}$ for the localization $\Lambda^{\bullet}_n[x^{-1}]$, as a differential bigraded $R[x^{\pm 1}]$-algebra.
\end{definition}
 By reducing $\Lambda_n^{\bullet}$ modulo the ideal $(x - \delta)$, we get the fiber $\Lambda_{n,\delta}^{\bullet}$ of the family of DGAs $\epsilon\mapsto \Lambda_{n,\epsilon}^{\bullet}$ over the point $\delta\in \mathbb{A}^1_R$. 

The purpose of the localization in Definition \ref{def of lambda punc}, when we define $\Lambda_{n,punc}^{\bullet}$, is simply to remove the singular fiber, i.e., the fiber at $\epsilon = 0$.

If one treats $x$ as a parameter, then $cc_{n,\epsilon}^{\bullet}$ may be filtered by powers of $x$, yielding a decreasing filtration 
\[ cc_{n,\epsilon}^{\bullet} \supseteq (x) \cdot cc_{n,\epsilon}^{\bullet} \supseteq (x^2) \cdot cc_{n,\epsilon}^{\bullet} \supseteq \cdots.\] Similarly, $FSC^{\bullet}$ may be filtered by powers of $x$, yielding a decreasing filtration \[ FSC^{\bullet} \supseteq (x) \cdot FSC^{\bullet} \supseteq (x^2) \cdot FSC^{\bullet} \supseteq \cdots.\] As a consequence of Proposition \ref{critical complex is a subset of first-subscript complex}, the former is a filtered cochain subcomplex of the latter, as long as $p>2n^2$.
The map of spectral sequences induced by this map of filtered cochain complexes will play a pivotal role in the proof of Theorem \ref{ss collapse thm 304}.


Let $k$ be a field that contains an $n$th root of $\delta/\epsilon$. By Proposition \ref{h-diag isos} and Corollary \ref{isos exist cor}, for any pair of points $\delta,\epsilon$ in the punctured affine line $\mathbb{A}^1_k - \{0\}$, we have a choice of isomorphism of the fiber of $\Lambda^{\bullet}_{n,punc}$ over $\delta$ with the fiber of $\Lambda^{\bullet}_{n,punc}$ over $\epsilon$. In fact, we have one choice of such an isomorphism for each $n$th root of $\delta/\epsilon$. This suggests the possibility of a {\em connection} on the bundle $\Lambda^{\bullet}_{n,punc}$, with locally constant parallel transport (that is, small perturbations to the ``path'' between two fibers gives us the same parallel transport isomorphism), whose parallel transport isomorphisms are the isomorphisms constructed in Proposition \ref{h-diag isos}, and such that the various choices of parallel transport isomorphism between the fibers correspond to different homotopy classes of paths, in $\mathbb{A}^1_k - \{0\}$, between the points $\epsilon$ and $\delta$. 

When $k=\mathbb{C}$, the previous sentence can be taken literally: we can regard $\mathbb{A}^1_{\mathbb{C}} - \{ 0\}$ as a punctured plane with the standard Euclidean topology, and then ``paths'' are paths in the familiar topological sense. Of course, when $k$ is some other field, e.g. a finite field, it takes some technology to make sense of what one ought to mean by a ``path'' in $\mathbb{A}^1_{k} - \{ 0\}$. The Tannakian formalism\footnote{We thank L. Candelori for telling us about the Tannakian approach to monodromy in positive characteristic.} offers one reasonable approach to this, as in \cite{MR1883387}. However, in the present situation, bringing in the Tannakian formalism is not really necessary: we can avoid the weight of the general theory by working concretely with the explicit examples of interest. There are only two ``flavors'' of connection on $\Lambda_{n,punc}^{\bullet}$ that we will consider in this paper: {\em Artin-Schreier-type} connections, and {\em Kummer-type} connections. For these two types of connections, we calculate explicit parallel transport isomorphisms. 


Below, in \cref{Parallel transport.}, we review some of the rudiments of the theory of connections and parallel transport, presenting the ideas in a way that is amenable to application to Artin-Schreier-type and Kummer-type connections (applications which we carry out in \cref{Parallel transport...Artin-Schreier} and in \cref{Parallel Kummer analytic at 0...}), and particularly to the bundle of differential graded $k$-algebras $\Lambda_n^{\bullet}$ (which we carry out in \cref{Differential graded parallel transport}).

\subsection{Basic ideas on differential graded parallel transport}
\label{Parallel transport.}

\subsubsection{Review of classical parallel transport}

Here are some notes on very classical, well-known uses for connections.

Classically, given a smooth manifold $M$ and a smooth real vector bundle $V$ on $M$, here are two equivalent definitions of a connection on $V$:
\begin{itemize}
\item In works which are written from a perspective closer to algebraic geometry, a connection on $V$ is most often defined as a $\mathbb{R}$-linear function \begin{align*} \nabla: \Gamma(V) &\rightarrow \Gamma(T^*M\otimes V)\end{align*} satisfying the Leibniz rule $\nabla(fs) = df\otimes s + f\nabla(s)$ for all smooth functions $f$ on $M$ and all smooth sections $s$ of $V$. 
\item In works which are written from a perspective closer to differential geometry, a connection on $V$ is most often defined as an $\mathbb{R}$-linear function \begin{align*} \nabla: \Gamma( TM\otimes V) &\rightarrow \Gamma(V)\end{align*} satisfying the Leibniz rule $\nabla_v(fs) = d_v(f)s + f\nabla_vs$ for all smooth functions $f$ on $M$ and smooth sections $s$, where $d_v(f)$ is the directional derivative of $f$ in the direction of the tangent vector $v$.
\end{itemize}
Given a connection $\nabla$ on $V$ and a smooth path $\gamma: [0,1]\rightarrow M$, for each choice of $v\in V_{\gamma(0)}$, there exists a unique section $s_v$ of $V$ along $\gamma$ satisfying $\nabla_{\dot{\gamma}(t)}(s_v)=0$ for all $t\in [0,1]$ and satisfying $s_v(\gamma(0)) = v$. This is a consequence of the existence and uniqueness theorem  (of Picard-Lindel\"{o}f) of solutions to first-order ODEs. The function $v\mapsto s_v(\gamma(1)): V_{\gamma(0)}\rightarrow V_{\gamma(1)}$ is then an $\mathbb{R}$-linear isomorphism, called the {\em parallel transport along $\gamma$} associated to $\nabla$.

One can try to implement these ideas more generally, replacing $\mathbb{R}$ with a field of characteristic zero\footnote{There are serious difficulties with solving ODEs over a field of positive characteristic: consider the problem of solving the ODE $\frac{dy}{dx} = x^{p-1}$ over a field of characteristic $p$, for example. In this paper we do not try to state the greatest level of generality for when $k$-linear connections can be ``integrated'' to yield parallel transport isomorphisms, since for our purposes, it suffices to know that certain Kummer connections with rational Kummer parameter can be integrated, and in \cref{Parallel Kummer analytic at 0...} we are able to give sufficient conditions for this integrability.}, and replacing smooth maps with algebraic maps. We work through the calculation of the parallel transport isomorphisms for the Artin-Schreier and Kummer connections, below, in sections \ref{Parallel Kummer analytic at 0...} and \ref{Parallel transport...Artin-Schreier}.

First, we settle on the following versions of the basic definitions:
\begin{definition}\label{def of connection}
Suppose we are given a homomorphism of commutative rings $R\rightarrow S$. 
\begin{itemize}
\item Given an $S$-module $M$, an {\em $R$-linear connection on $M$} is an $R$-module homomorphism $\nabla: M \rightarrow M\otimes_S \Omega^1_{S/R}$ such that $\nabla(ms) = (\nabla m)s + m \otimes ds$ for all $m\in M$ and all $s\in S$.
\item We often think of a pair $(M,\nabla)$, with $M$ an $S$-module and $\nabla$ an $R$-linear connection on $M$, as a single object, an {\em $S$-module with $R$-linear connection}. Usually $R$ is taken to be an obvious ground field. 

Given two modules with connection $(M,\nabla)$ and $(M^{\prime},\nabla^{\prime})$, {\em a morphism of modules with connection} is an $S$-module morphism $M \rightarrow M^{\prime}$ which commutes with the connections. This definition yields a category $\Mod_{FG}\Conn_R(S)$ of finitely-generated $S$-modules with $R$-linear connection.
\item Given a $S$-algebra $A$, a {\em multiplicative $R$-linear connection on $A$} is a connection $\nabla: A \rightarrow A\otimes_S \Omega^1_{S/R}$ on the underlying $S$-module of $A$, satisfying the additional condition that $\nabla$ is a derivation, i.e., $\nabla(a_1a_2) = (\nabla a_1)a_2 + a_1(\nabla a_2)$ for all $a_1,a_2\in A$. 
\end{itemize}
\end{definition}

It is classical that, given a connection $\nabla: M \rightarrow M\otimes_S \Omega^1_{S/R}$ on a free $S$-module $M$, there is a unique extension of $\nabla$ to a multiplicative connection $\Lambda_S(M) \rightarrow \Lambda_S(M) \otimes_S \Omega^1_{S/R}$ on the exterior $S$-algebra $\Lambda_S(M)$ of $M$. Consequently, to give a grading-preserving multiplicative connection $\Lambda^{\bullet}_{n} \rightarrow \Lambda^{\bullet}_{n} \otimes_{R[x]} \Omega^1_{R[x]/R}$ is the same as to give a connection $\Lambda^{1}_{n} \rightarrow \Lambda^1_{n} \otimes_{R[x]} \Omega^1_{R[x]/R}$ on the $R[x]$-module of $1$-cochains $\Lambda^1_n$ of $\Lambda_n^{\bullet}$, i.e., a connection on the free $R[x]$-module spanned by $\{ h_{i,j}: i,j\in Z_n\}$. Similarly, to give a grading-preserving multiplicative connection $\Lambda^{\bullet}_{n,punc} \rightarrow \Lambda^{\bullet}_{n,punc} \otimes_{R[x^{\pm 1}]} \Omega^1_{R[x^{\pm 1}]/R}$ is the same as to give a connection $\Lambda^{1}_{n,punc} \rightarrow \Lambda^1_{n,punc} \otimes_{R[x^{\pm 1}]} \Omega^1_{R[x^{\pm 1}]/R}$.

\subsubsection{Parallel transport in a Kummer connection}  
\label{Parallel Kummer analytic at 0...}

\begin{definition}\label{def of kummer conn}
Let $R$ be a commutative ring, and let $a\in R$.
\begin{itemize}
\item
The {\em $R$-linear Kummer module with parameter $a$}, written $\mathcal{K}_a$, is an $R[x^{\pm 1}]$-module with $R$-linear connection defined as follows. The underlying $R[x^{\pm 1}]$-module of $\mathcal{K}_a$ is the free $R[x^{\pm 1}]$-module on one generator, for which we will write $h$. The connection $\nabla: R[x^{\pm 1}] \rightarrow R[x^{\pm 1}]\otimes_{R[x^{\pm 1}]} \Omega^1_{R[x^{\pm 1}]/R}$ is the $R$-linear map that sends $1$ to $a\otimes x^{-1} dx$ and satisfies the Leibniz rule $\nabla(sf) = (\nabla s)f + s(df)$ for all $f\in R[x^{\pm 1}]$.
\item
A {\em Kummer-type connection} (respectively, {\em finite Kummer-type connection}) is a module with connection which is isomorphic to a direct sum (respectively, a finite direct sum) of modules with Kummer connection. 
\item If an $R[x^{\pm 1}]$-module $M$ with connection is equipped with an $R[x^{\pm 1}]$-linear basis $B$ such that, for each $b\in B$,  $\nabla(b)$ is a scalar times $b\otimes x^{-1}dx$, then we say that $B$ is a {\em Kummer basis for $M$.} 
\item
If $M$ is a $R[x^{\pm 1}]$-algebra equipped with a Kummer-type connection $\nabla$, then a {\em multiplicative Kummer basis for $M$} is a Kummer basis $B$ for $(M,\nabla)$ such that $1 \in B$, if $b_1,b_2 \in B$, then $b_1 \cdot b_2 \in B$, and the Kummer parameter of $b_1 \cdot b_2$ is equal to the sum of the Kummer parameters of $b_1$ and $b_2$. 
\item 
If $M$ is a differential graded $R[x^{\pm 1}]$-algebra equipped with a Kummer-type differential graded multiplicative connection $\nabla$ in the sense of Definition \ref{def of dg mult conn}, then a {\em DG Kummer basis for $M$} is a multiplicative Kummer basis $B$ for $M$ such that, if $b \in B$, then $d(b)$ is a $k[x]$-linear combination of elements of $B$ whose Kummer parameter is equal to that of $b$. 
\item
We say that a Kummer-type connection {\em has rational parameters} (respectively, {\em integral parameters}) if it is isomorphic to a direct sum of modules with Kummer connections whose parameters are rational numbers (respectively, integers).
\end{itemize}
\end{definition}

We will make significant use of the fact that the collection of Kummer-type connections is closed under taking exterior powers: 
\begin{prop}\label{exterior powers of kummer conns}
Let $R$ be a commutative ring, let $a_1, \dots ,a_n\in R$, and let $\mathcal{K}$ denote the Kummer-type connection
\[ \mathcal{K} = \mathcal{K}_{a_1} \oplus \dots \oplus \mathcal{K}_{a_n}.\]
Let $m$ be an integer. Then the $m$th exterior power of $\mathcal{K}$ is isomorphic to the Kummer-type connection
\[ \Lambda^m(\mathcal{K})\cong \bigoplus_{S} \mathcal{K}_{\sum_{i\in  S} a_i},\]
where the direct sum is taken over all subsets $S$ of $\{ 1, \dots ,n\}$ with exactly $m$ elements.

Consequently the natural multiplicative connection on the exterior algebra of $\mathcal{K}$ (as in the discussion immediately following Definition \ref{def of connection}) satisfies
\[ \Lambda(\mathcal{K}) \cong \bigoplus_{S\subseteq \{ 1, \dots ,n\}} \mathcal{K}_{\sum_{i\in  S} a_i},\]
where now the direct sum is taken over all subsets $S$ of $\{ 1, \dots ,n\}$.
\end{prop}
\begin{proof}
Elementary.
\end{proof}

\begin{prop}\label{kummer modules iso 1}
Let $R$ be a commutative ring, let $r,s$ be rational numbers, and suppose that $r-s$ is an integer. Then we have an isomorphism of $R$-linear Kummer modules with connection $\mathcal{K}_r\cong\mathcal{K}_s$.
\end{prop}
\begin{proof}
Let $f$ be the $R[x^{\pm 1}]$-linear homomorphism $R[x^{\pm 1}] \rightarrow R[x^{\pm 1}]$ which sends $1$ to $x^{r-s}$. It is routine to verify that $f$ makes the diagram
\[\xymatrix{
 R[x^{\pm 1}] \ar[r]^(.3){\nabla_r} \ar[d]^f & R[x^{\pm 1}]\otimes_{R[x^{\pm 1}]}\Omega^1_{R[x^{\pm 1}]/R}\ar[d]^{f\otimes \Omega^1_{R[x^{\pm 1}]/R}} \\
 R[x^{\pm 1}] \ar[r]^(.3){\nabla_s} & R[x^{\pm 1}]\otimes_{R[x^{\pm 1}]}\Omega^1_{R[x^{\pm 1}]/R}
}\]
commute, where $\nabla_r,\nabla_s$ are the connections on $\mathcal{K}_r$ and on $\mathcal{K}_s$, respectively. It is straightforward to see that $f$ is bijective as well. 
\end{proof}

A classical calculation over the complex numbers shows that, given the Kummer connection $\mathcal{K}_a$, a complex number $v$, and a nonzero complex number $\epsilon$, a differentiable function $f(x)$ is a solution to the ODE
\begin{align} \label{cauchy-euler ode 1}  \frac{df}{dx} + \frac{a}{x} f &= 0 
\end{align}
if and only if, for a closed path $\gamma: [0,1]\rightarrow \mathbb{A}^1_{\mathbb{C}}\backslash \{ 0\}$, the parallel transport isomorphism from $\mathbb{C}\cong (\mathcal{K}_a)_{\gamma(0)}$ to $\mathbb{C}\cong (\mathcal{K}_a)_{\gamma(1)}$ is the $\mathbb{C}$-linear map that sends $f(\gamma(0))$ to $f(\gamma(1))$. 
An elementary calculation yields that the solutions $f$ to \eqref{cauchy-euler ode 1} are scalar multiples of $x^{-a}$. When the Kummer parameter $a$ is an integer, the situation is quite simple: for each $\epsilon,\delta\in\mathbb{C}^{\times}$, the parallel transport map $(\mathcal{K}_a)_{\epsilon}\rightarrow (\mathcal{K}_a)_{\delta}$ is simply multiplication by $(\epsilon/\delta)^a$. 

On the other hand, when $a$ is a rational number but not an integer, one still gets that the parallel transport isomorphism from the fiber of $\mathcal{K}_a$ at $\epsilon$ to the fiber of $\mathcal{K}_a$ at $\delta$ is multiplication by $(\epsilon/\delta)^{a}$, but notice that the function $x\mapsto x^{a}$ is not quite well-defined: it depends on a choice of branch cut for the $s$th root function on $\mathbb{C} - \{0\}$, where $s$ is the denominator of the (reduced) rational number $a$. To put it another way, when $a$ is a non-integer rational number, the Kummer connection with parameter $a$ fails to have {\em globally} path-independent parallel transport: it matters {\em which homotopy class} of path in $\mathbb{C} - \{0\}$ from $\epsilon$ to $\delta$ we choose, and specifically, how many times the path winds around the singularity at $0$. 

Consequently we get a nontrivial monodromy operator on the tangent space to $\mathbb{A}^1_{\mathbb{C}}$ at a nonzero complex number: in the case $a = r/s$ with $r$ coprime to $s$, the monodromy operator $T: \mathbb{C}\rightarrow\mathbb{C}$ cyclically permutes the $s$th roots of unity in $\mathbb{C}$. In the particular case $a=1/s$, fix a primitive $s$th root of unity $\omega$ in $\mathbb{C}$, and then $T$ is given on each fiber $k$ of the Kummer module with connection $\mathcal{K}_{1/s}$ by the multiplication-by-$\omega$ map $k\rightarrow k$. 

Everything discussed so far in this subsection is very well-known. We have included this material because our next task is to explain how {\em the same constructions can be made over a general field whose characteristic does not divide the denominator of a rational Kummer parameter.} 

Suppose $a$ is the rational number $r/s$, and suppose that $k$ is a field whose characteristic does not divide $s$. Suppose that $\epsilon$ and $\delta$ are nonzero elements of $k$, and suppose that $k$ has $s$ distinct $s$th roots of $\epsilon/\delta$. Then, for each of those $s$th roots $\zeta$ of $\epsilon/\delta$, we have an isomorphism 
\begin{align}
\nonumber \mathcal{K}_a/(x-\epsilon)\cong k &\rightarrow k\cong \mathcal{K}_a/(x-\delta)  \\
\label{map 11203} v&\mapsto v\cdot \zeta^r
\end{align}  
from the fiber of the Kummer module $\mathcal{K}_a$ at $\epsilon$ to the fiber of the Kummer module $\mathcal{K}_a$ at $\delta$. We think of \eqref{map 11203} as ``parallel transport, from the fiber at $\epsilon$ to the fiber at $\delta$, along any path in the homotopy class of $\zeta$.'' 

In particular, in the case that $\epsilon = \delta$, after fixing a choice of primitive $s$th root of unity $\omega$, we have a Picard-Lefschetz operator
\begin{align}
\nonumber \mathcal{K}_{r/s}/(x-\epsilon)\cong k &\stackrel{T}{\longrightarrow} k\cong \mathcal{K}_{r/s}/(x-\epsilon)  \\
\label{map 11203a} T(v)&= v\cdot \omega^r,
\end{align} 
i.e., $T$ is the monodromy operator on the smooth fiber of $\mathcal{K}_{r/s}$.

This is quite figurative. For most fields $k$, it is not clear what we ought to mean by a ``path'' in $\mathbb{A}^1_k$. This is particularly true in characteristic $p$, where it is not generally even true that the maps \eqref{map 11203} for various $\zeta$ are all the solutions of the differential equation \eqref{cauchy-euler ode 1}. Differential equations over fields of characteristic $p>0$ have many ``spurious'' solutions (i.e., solutions with no corresponding solution in characteristic zero) that arise from the vanishing of $\frac{d}{dx} (x^p)$. We take the maps \eqref{map 11203} to be the {\em definition} of parallel transport in a Kummer connection over a general field $k$ whose characteristic does not divide the denominator of the Kummer parameter. 

The map \eqref{map 11203} depends only on the choice of the root $\zeta$ of $\epsilon/\delta$. Consequently we will refer to $\zeta$ as a {\em path} from $\epsilon$ to $\delta$. Consequently, a root of unity is a path from $\epsilon$ to $\epsilon$, i.e., a {\em loop} based at $\epsilon$. This terminology is justified by a strong analogy with the behavior of parallel transport over $\mathbb{C}$ using a flat connection: just as the set of parallel transport isomorphisms \eqref{map 11203} (i.e., the set of $s$th roots of $\epsilon/\delta$) is a torsor for the group of $s$th roots of unity in $k$, the set of homotopy classes of paths from $\epsilon$ and $\delta$ in $\mathbb{A}^1_k - \{0\}$ morally ought to be a torsor for the fundamental group of $\mathbb{A}^1_k - \{0\}$.

\subsubsection{Parallel transport in an Artin-Schreier connection}
\label{Parallel transport...Artin-Schreier}

\begin{definition}
Given a commutative ring $R$ and an element $a\in R$, the {\em $R$-linear Artin-Schreier module over $\mathbb{A}^1_R$ with parameter $a$}, written $\mathcal{L}_a$, is an $R[x]$-module with $R$-linear connection defined as follows. The underlying $R[x]$-module of $\mathcal{L}_a$ is the free $R[x]$-module on one generator. We will write $h$ for this generator. The connection $\nabla: R[x] \rightarrow R[x]\otimes_{R[x]} \Omega^1_{R[x]/R}$ on $\mathcal{L}_a$ is the unique $R$-linear map that sends $h$ to $-ah\otimes dx$ and satisfies the Leibniz rule $\nabla(sf) = (\nabla s)f + s \otimes df$ for all $f\in R[x]$.

An $R[x]$-module with connection is {\em Artin-Schreier-type} if it is a direct sum of Artin-Schreier modules with connection.
\end{definition}

\begin{prop} \label{kummer modules iso 2}
Let $R$ be a commutative ring, and let $r$ be an integer. Then the $R$-linear Kummer module $\mathcal{K}_r$ on $\mathbb{A}^1_R$ is isomorphic to the $R$-linear Artin-Schreier module $\mathcal{L}_0$ restricted to $\mathbb{A}^1_R\backslash \{0\}$.
\end{prop}
\begin{proof}
By Proposition \ref{kummer modules iso 1}, $\mathcal{K}_r$ is isomorphic to $\mathcal{K}_0$. The connection on $\mathcal{K}_0$ and the connection on $\mathcal{L}_0\otimes_{R[x]}R[x^{\pm 1}]$ are each determined by their behavior on a generator for $R[x^{\pm 1}]$, and each vanishes on that generator.
\end{proof}

By a calculation analogous to that carried out for Kummer connections in \cref{Parallel Kummer analytic at 0...}, over $\mathbb{C}$ the parallel transport isomorphisms for the Artin-Scheier connection $\mathcal{L}_a$ are the solutions to the ordinary differential equation $\frac{df}{dx} - af = 0$. Hence the parallel transport isomorphism along any path $\gamma$ from the fiber of $\mathcal{L}_a$ at $\epsilon$ to the fiber of $\mathcal{L}_a$ at $\delta$ sends $v$ to $v\cdot e^{a\cdot (\delta-\epsilon)}$. 


In the case $k = \mathbb{C}$, the Artin-Schreier connection has 
globally path-independent parallel transport, since it is defined on all of $\mathbb{A}^1_{\mathbb{C}}$.
Hence it does not matter which path $\gamma$ we choose, from one point $\epsilon\in \mathbb{A}^1_{\mathbb{C}}$ to another point $\delta\in \mathbb{A}^1_{\mathbb{C}}$: the parallel transport isomorphism depends only on the endpoints of the path. So the calculation of the parallel transport isomorphisms for $\mathcal{L}_a$ made in this section, for one particular choice of path $\gamma$ from $\epsilon$ to $\delta$, are in fact complete calculations of all the parallel transport isomorphisms for $\mathcal{L}_a$.

The above does not generalize to an arbitrary field $k$: consider what $e^{a\cdot (\delta - \epsilon)}$ ought to mean in the case $k = \overline{\mathbb{Q}}$, for example. But it {\em does} make sense for arbitrary fields $k$ if the Artin-Scheier parameter $a$ is zero. In this paper, we will only ever need to work with Kummer-type connections with rational parameters, and Artin-Schreier-type connections with parameter zero. We explained in \cref{Parallel Kummer analytic at 0...} that the former have well-defined parallel transport isomorphisms as long as the characteristic of the field does not divide the denominator of the Kummer parameter. In this subsection, we saw that the latter have well-defined parallel transport isomorphisms over arbitrary fields. Consequently, in the rest of this paper, we will be able to prove some theorems about the relevant cases of parallel transport over fields of positive characteristic, even without having a {\em general} theory of parallel transport over such fields.

\subsubsection{Functoriality of parallel transport}


We have the category $\Proj\Mod_{FG}\Conn_{\mathbb{C}}(\mathbb{C}[x^{\pm 1}])$ of finitely generated projective $\mathbb{C}[x^{\pm 1}]$-modules equipped with a $\mathbb{C}$-linear connection. For each object $(M,\Delta)$ of $\Proj\Mod_{FG}\Conn_{\mathbb{C}}(\mathbb{C}[x^{\pm 1}])$ and each path $\gamma: [0,1]\rightarrow \mathbb{C} - \{ 0\}$, we have the parallel transport isomorphism 
\begin{align*}
 PT_{\gamma}(M,\Delta): M/(x-\gamma(0)) &\stackrel{\cong}{\longrightarrow} M/(x-\gamma(1)) .
\end{align*}
The map $PT_{\gamma}(M,\Delta)$ is an isomorphism of $\mathbb{C}$-vector spaces, i.e., an object of the category $\Iso\Mod(\mathbb{C})$.
It is routine to check that
\begin{align*} PT_{\gamma}: \Proj\Mod_{FG}\Conn_{\mathbb{C}}(\mathbb{C}[x^{\pm 1}]) &\rightarrow \Iso\Mod(\mathbb{C}) \end{align*}
 is an additive functor, and moreover, it is lax monoidal, if we equip $\Proj\Mod_{FG}\Conn_{\mathbb{C}}(\mathbb{C}[x^{\pm 1}])$ with the following monoidal product:
\begin{definition}
Let $R\rightarrow S$ be a homomorphism of commutative rings.
Given $S$-modules $(M_1,\nabla_1)$ and $(M_2,\nabla_2)$ with $R$-linear connection, the {\em tensor product of $(M_1,\nabla_1)$ and $(M_2,\nabla_2)$} is the $S$-module with $R$-linear connection given by $M_1\otimes_S M_2$, with connection 
\begin{align*}
 M_1\otimes_S M_2 &\longrightarrow M_1\otimes_S M_2\otimes_S \Omega^1_{S/R} \\
 m_1\otimes m_2 &\mapsto \nabla_1(m_1)\otimes m_2 + m_1\otimes \nabla_2(m_2).
\end{align*}
With this tensor product operation, the category $\Proj\Mod_{FG}\Conn_R(S)$ is a symmetric monoidal category.
\end{definition}

The importance of the functoriality and additivity of $PT_{\gamma}$ is that these properties ensure that, if \[ \dots 
 \stackrel{d^{-1}}{\longrightarrow} (M^0,\nabla^0) 
 \stackrel{d^0}{\longrightarrow} (M^1,\nabla^1) 
 \stackrel{d^1}{\longrightarrow} (M^2,\nabla^2) 
 \stackrel{d^2}{\longrightarrow} \dots \]
is a sequence of homomorphisms of finitely generated projective modules with connection such that $d^n\circ d^{n-1} = 0$ for all $n$, then there are induced maps on fibers
\[\xymatrix{
 \vdots \ar[d]_{PT_{\gamma}(d^{-1})_{domain}} && 
  \vdots \ar[d]^{PT_{\gamma}(d^{-1})_{codomain}} \\
 M^0_{\gamma(0)} \ar[rr]^{PT_{\gamma}(M^0,\nabla^0)}\ar[d]_{PT_{\gamma}(d^0)_{domain}} &&
  M^0_{\gamma(1)}\ar[d]^{PT_{\gamma}(d^0)_{codomain}} \\
 M^1_{\gamma(0)} \ar[rr]^{PT_{\gamma}(M^1,\nabla^1)}\ar[d]_{PT_{\gamma}(d^1)_{domain}} &&
  M^1_{\gamma(1)}\ar[d]^{PT_{\gamma}(d^1)_{codomain}} \\
 M^2_{\gamma(0)} \ar[rr]^{PT_{\gamma}(M^2,\nabla^2)}\ar[d]_{PT_{\gamma}(d^2)_{domain}} &&
  M^2_{\gamma(1)}\ar[d]^{PT_{\gamma}(d^2)_{codomain}} \\
 \vdots && \vdots
}\]
such that the vertical composites 
\begin{align*} PT_{\gamma}(d^{n+1})_{domain} &\circ PT_{\gamma}(d^n)_{domain}\mbox{\ \ \ and}\\ PT_{\gamma}(d^{n+1})_{codomain} &\circ PT_{\gamma}(d^n)_{codomain}\end{align*} are both zero. In other words, {\em parallel transport using a differential graded connection yields an isomorphism of cochain complexes.}

The reason to bother with lax monoidality of $PT_{\gamma}$ is simply that lax monoidal functors send monoid objects to monoid objects. Monoid objects in $\Proj\Mod_{FG}\Conn_R(S)$ are multiplicative connections, as defined in Definition \ref{def of connection}. 
We also have a differential graded version:
\begin{definition}\label{def of dg mult conn}
Let $R\rightarrow S$ be a homomorphism of commutative rings, and let $A$ be a differential graded $S$-algebra whose underlying $S$-module is finitely generated and projective in each grading degree. By a {\em differential graded multiplicative connection on $A$} we mean a connection $\nabla^n: A^n\rightarrow A^n\otimes_S \Omega^1_{S/R}$ for each integer $n$ satisfying the conditions
\begin{itemize}
\item $\nabla^{n+1} \circ d = (d\otimes_S \id)\circ \nabla^n$ for all $n$, i.e., the sequence $\dots ,\nabla^{-1},\nabla^0,\nabla^1, \dots$ defines a connection on the underlying cochain complex of $A$,
\item and, if $a\in A^m$ and $b\in A^n$, then $\nabla^{m+n}(ab) = (\nabla^m a_1)a_2 + a_1(\nabla^n a_2)$, i.e., $\bigoplus_n\nabla^n$ is a multiplicative connection on the underlying graded $S$-module of $A$.
\end{itemize} 
\end{definition}

The upshot is:
\begin{observation}\label{structured parallel transport}\leavevmode
\begin{itemize}
\item If $(M,\nabla)$ is a module with multiplicative connection, then its parallel transport isomorphisms are {\em algebra} homomorphisms.
\item If $(M,\nabla)$ is a module with differential graded multiplicative connection, then its parallel transport isomorphisms are {\em differential graded} algebra homomorphisms.
\end{itemize}
\end{observation}

This formal, categorical approach to structured parallel transport generalizes to fields other than $\mathbb{C}$. Let $\mathcal{K}_a$ be a Kummer connection whose parameter $a$ is rational, with denominator $s$. 
Let $k$ be a field whose characteristic does not divide $s$, and which has $s$ distinct $s$th roots of each of its nonzero elements. As explained in \cref{Parallel Kummer analytic at 0...}, for each ``path'' $\gamma$ in $\mathbb{A}^1_k - \{ 0\}$---i.e., each pair of nonzero elements $\delta,\epsilon$ in $k$ and each choice of $s$th root of $\epsilon/\delta$---we have a parallel transport isomorphism from $M_{\epsilon}$ to $M_{\delta}$. Write $PT_{\gamma}(\mathcal{K}_a)$ for the parallel transport isomorphism $k[x^{\pm 1}]/(x-\epsilon) \rightarrow k[x^{\pm 1}]/(x-\delta)$. The construction of $PT_{\gamma}$ extends, in a basically obvious way, to direct sums of such Kummer connections, as well as copies of the Artin-Schreier module $\mathcal{L}_0$. The argument for additivity and monoidality of $PT_{\gamma}$ over $\mathbb{C}$ works in this setting as well, and Observation \ref{structured parallel transport} remains true.

\subsection{Differential graded parallel transport in the deformed Ravenel model}
\label{Differential graded parallel transport}

In Proposition \ref{h-diag isos} we saw that, for any field $k$ and any positive integer $n$ and any pair of nonzero elements $\delta,\epsilon\in k$ such that $\epsilon/\delta$ has an $n$th root in $k$, there exists a $\sigma$-equivariant isomorphism of differential graded $k$-algebras $\Lambda^{\bullet}_{n,\epsilon}\stackrel{\cong}{\longrightarrow} \Lambda^{\bullet}_{n,\delta}$ for each $n$th root of $\delta/\epsilon$. In Definition \ref{def of lambda punc}, we organized the differential graded $k$-algebras $\Lambda^{\bullet}_{n,\epsilon}$, for the various nonzero choices of $\epsilon\in k$, into a single bundle of DGAs over the punctured affine line.
Now we are in a position to answer the following question: 
\begin{question}\label{existence q}Does there exist a connection on $\Lambda^{\bullet}_{n,punc}$ whose associated parallel transport isomorphisms are the isomorphisms constructed in Proposition \ref{h-diag isos}?\end{question}
To be clear, it is easy to find a connection on $\Lambda^{\bullet}_{n,punc}$ whose associated parallel transport isomorphisms are $k$-linear isomorphisms, or even graded $k$-algebra isomorphisms. The trivial connection does the job, for example. But we are being much more restrictive by asking that the parallel transport isomorphisms are isomorphisms of DGAs, not just isomorphisms of graded $k$-algebras.

Question \ref{existence q} has a positive answer:
\begin{theorem}\label{main thm 1}
Let $n$ be a positive integer. Suppose that every nonzero element in the field $k$ has $n$ distinct $n$th roots. 
Recall that the $1$-cochains $\Lambda_{n,punc}^1$ in $\Lambda_{n,punc}^{\bullet}$ are $k[x^{\pm 1}]$-linearly spanned by the elements $\{ h_{i,j} : i,j\in Z_n\}$. Equip $\Lambda_{n,punc}^1$ with the connection $\nabla: \Lambda_{n,punc}^1 \rightarrow \Lambda_{n,punc}^1\otimes_{k[x^{\pm 1}]} \Omega^1_{k[x^{\pm 1}]/k}$ given by letting $\nabla(h_{i,j}) = -\frac{i}{n}h_{i,j}\otimes \frac{dx}{x}$.  Equip $\Lambda_{n,punc}^{\bullet}$ with the resulting multiplicative connection\footnote{See the discussion following Definition \ref{def of connection} for the fact that a multiplicative connection on $\Lambda_{n,punc}^{\bullet}$ is uniquely determined by a connection on its $1$-cochains.}.
Then the following statements are each true:
\begin{enumerate}
\item $\Lambda_{n,punc}^{\bullet}$ is Kummer-type with rational Kummer parameters. 
\item The parallel transport isomorphisms $\Lambda_{n,\epsilon}^{\bullet} \rightarrow \Lambda_{n,\delta}^{\bullet}$ induced by the connection on $\Lambda_{n,punc}^{\bullet}$ are the $\sigma$-equivariant isomorphisms constructed in Proposition \ref{h-diag isos}. In particular, for each $\epsilon,\delta\in \mathbb{A}^1_k - \{0\}$, the parallel transport isomorphism $\Lambda_{n,\epsilon}^{\bullet} \rightarrow \Lambda_{n,\delta}^{\bullet}$ associated to $\nabla$, along the path given by any $n$th root of $\epsilon/\delta$ in $k$, is a $\sigma$-equivariant isomorphism of differential graded $k$-algebras\footnote{In \cref{Parallel Kummer analytic at 0...} we explained that, in a Kummer-type connection whose Kummer parameters are divisors of $n$, the $n$th roots of $\epsilon/\delta$ are to be understood as the (homotopy classes of) ``paths'' from $\epsilon$ to $\delta$, along which one can carry out parallel transport.}.
\item Let $\omega\in k$ be a primitive $n$th root of unity\footnote{Again, see \cref{Parallel Kummer analytic at 0...} for the idea that $\omega$ behaves, morally speaking, like a generator for the fundamental group of $\mathbb{A}_1^k\backslash \{0\}$, based at $\epsilon$, reduced modulo $n$.} Let $T: \Lambda_{n,\epsilon}^{\bullet} \rightarrow \Lambda_{n,\epsilon}^{\bullet}$ be the monodromy operator arising from parallel transport along the path given by $\omega$ using the connection $\nabla$ on the bundle of DGAs $\Lambda_n^{\bullet}$. Then the action of $T$ on $\Lambda_{n,\epsilon}^{\bullet}$ is diagonalizable, with diagonal basis given by the $h$-basis. The element $h_{i_1,j_1}\wedge \dots \wedge h_{i_m,j_m}\in \Lambda_{n,\epsilon}^m$ is an eigenvector for the action of $T$ with eigenvalue $\omega^{-(i_1 + \dots + i_m)}$. 
\item Let $\epsilon\in k$ be nonzero, and let $(\Lambda_{n,\epsilon}^{\bullet})^T$ denote the fixed points of the action of $T$ on $\Lambda_{n,\epsilon}^{\bullet}$. Then $(\Lambda_{n,\epsilon}^{\bullet})^T$ is equal to the first-subscript complex in $\Lambda_{n,\epsilon}^{\bullet}$, defined in Definition \ref{crit complex def-prop}.
\end{enumerate}
\end{theorem}
\begin{proof}
Handling the claims in order:
\begin{enumerate}
\item This follows immediately from Proposition \ref{exterior powers of kummer conns}.
\item This follows from the calculation of the splitting of $\Lambda_{n,\epsilon}^{\bullet}$ as a direct sum of Kummer modules with connection, in Proposition \ref{exterior powers of kummer conns}, together with the calculation of the parallel transport isomorphisms associated to Kummer modules with connection in \cref{Parallel Kummer analytic at 0...}. 
\item This follows from the calculation, in \eqref{map 11203a} in \cref{Parallel Kummer analytic at 0...}, of the monodromy operator on the fibers of Kummer modules with connection.
\item This follows from the calculation of the splitting of $\Lambda_{n,\epsilon}^{\bullet}$ as a direct sum of Kummer modules with connection in Proposition \ref{exterior powers of kummer conns}.
\end{enumerate}
\end{proof}

Theorem \ref{main thm 1} will play an essential role in the proof of Theorem \ref{ss collapse thm 304}, but we will not use Theorem \ref{main thm 2} in the rest of the paper.
\begin{theorem}\label{main thm 2}
Let $p$ be a prime.
Let $n$ be a positive integer, and let $k$ be a perfect field of characteristic $\neq p$. Suppose that every nonzero element in the field $k$ has $\frac{p^n-1}{p-1}$ distinct $\frac{p^n-1}{p-1}$th roots. 
Let $\phi:k\rightarrow k$ be the Frobenius automorphism.
Consider the differential graded $k$-algebra $\Lambda_{n,punc}^{\bullet}$. Equip $\Lambda_{n,punc}^1$ with the connection $\nabla: \Lambda_{n,punc}^1 \rightarrow \Lambda_{n,punc}^1\otimes_{R[x^{\pm 1}]} \Omega^1_{R[x^{\pm 1}]/R}$ given by letting $\nabla(h_{i,j}) = -\frac{p^{i+j}-p^j}{p^n-1} h_{i,j}\otimes \frac{dx}{x}$. Equip $\Lambda_{n,punc}^{\bullet}$ with the resulting multiplicative connection.
Then the following statements are each true:
\begin{enumerate}
\item $\Lambda_{n,punc}^{\bullet}$ is Kummer-type with rational Kummer parameters.
\item The parallel transport isomorphisms $\Lambda_{n,\epsilon}^{\bullet} \rightarrow \Lambda_{n,\delta}^{\bullet}$ induced by the connection on $\Lambda_{n,punc}^{\bullet}$, are the $\tilde{\sigma}$-equivariant $k$-linear isomorphisms constructed in Proposition \ref{h-diag isos}. In particular, for each $\epsilon,\delta\in \mathbb{A}^1_{\mathbb{F}_p(\omega)} - \{0\}$ and each $\frac{p^n-1}{p-1}$th root $\zeta$ of $\epsilon/\delta$ in $k$, the parallel transport isomorphism
\[ \Lambda_{n,\epsilon}^{\bullet} \rightarrow \Lambda_{n,\delta}^{\bullet}\] associated to $\nabla$ along the path given by $\zeta$ is a $\tilde{\sigma}$-equivariant isomorphism of differential graded $k$-algebras.
\item Let $T: \Lambda_{n,\epsilon}^{\bullet} \rightarrow \Lambda_{n,\epsilon}^{\bullet}$ be the Picard-Lefschetz (i.e., monodromy) operator arising from a primitive $\frac{p^n-1}{p-1}$th root of unity in $k$.
Then the action of $T$ on $\Lambda_{n,\epsilon}^{\bullet}$ is diagonalizable, with diagonal basis given by the $h$-basis. The element $h_{i_1,j_1}\wedge \dots \wedge h_{i_m,j_m}\in \Lambda_{n,\epsilon}^m$ is an eigenvector for the action of $T$, with eigenvalue $\omega^{-i_1 - \dots - i_m}$.
\item Let $\epsilon\in k^{\times}$, and let $(\Lambda_{n,\epsilon}^{\bullet})^T$ denote the fixed points of the action of $T$ on $\Lambda_{n,\epsilon}^{\bullet}$. Then $(\Lambda_{n,\epsilon}^{\bullet})^T$ is equal to the critical complex in $\Lambda_{n,\epsilon}^{\bullet}$, defined in Definition \ref{crit complex def-prop}.
\end{enumerate}
\end{theorem}
\begin{proof}
Analogues of the methods used in Theorem \ref{main thm 1} work equally well here. 
\end{proof} 

\subsection{Medial layers and cores}
\label{Medial layers and cores.}

A Kummer module $\mathcal{K}_a$ is a free $k[x^{\pm 1}]$-module on a single generator, equipped with a particular connection. The generator of that free $k[x^{\pm 1}]$-module was called $h$ in Definition \ref{def of kummer conn}. We will have reason to consider the $k[x]$-submodule of $\mathcal{K}_a$ generated by $h$, so we give it a name:
\begin{definition}
The {\em medial layer} of a Kummer module $\mathcal{K}_a$ is the $k[x]$-submodule of $\mathcal{K}_a$ generated by $h$. 

More generally, given a Kummer-type module $M$ equipped with a Kummer basis $B$, its {\em medial layer} is the $k[x]$-submodule of $M$ generated by $B$. We write $\medial(M)$ for the medial layer of $M$.
\end{definition}
The medial layer of a Kummer module is usually not a module with connection, since the $k[x]$-submodule of $\mathcal{K}_a$ generated by $h$ is usually not closed under $\nabla$. The exception is the case $a = 0$, which indeed is a module with connection, isomorphic to the Artin-Schreier module $\mathcal{L}_0$.

We might consider how to enlarge the medial layer in order to get a $k[x]$-submodule of $\mathcal{K}_a$, or more generally any Kummer-type module, which {\em is} a submodule-with-connection. This is impossible if the Kummer parameter $a$ is not an integer, due to nontriviality of the monodromy action. However, for Kummer modules with integral parameters, such a construction is possible. We will call this construction the {\em core} of the Kummer module. 
If the Kummer parameters are integral and all nonnegative, then the core will contains the medial layer; on the other hand, if the Kummer parameters are integral and all nonpositive, then the medial layer contains the core. Because our motivating examples (from Theorem \ref{main thm 1}) all have nonpositive Kummer parameters, we will focus on the nonpositive case, but with a bit of care, analogous results to those in this section can also be proven in the case of nonnegative parameters.

A Kummer module, by definition, has smooth fibers but does not have a fiber at zero. The core is a canonical way to ``fill in'' such a fiber at zero, yielding a vector bundle with connection over $\mathbb{A}^1_k$, not only over $\mathbb{A}^1_k \backslash \{0\}$. Inverting $x$ (i.e., localizing away from zero) then recovers the original Kummer module. 
We think of taking the core of a Kummer module with integral parameters as an analogue of taking the ring of integers of a local number field, i.e., an operation that canonically ``fills in'' the missing information at a certain fiber (the special fiber, in the case of a local field; the fiber at zero, in the case of a Kummer connection).

\begin{definition-proposition}\label{def of core}
Let $k$ be a field. Suppose that $M$ is a free $k[x^{\pm 1}]$-module equipped with a Kummer-type connection with integral parameters. Choose a Kummer basis $B$ for $M$. For each $b\in B$, write $\alpha(b)$ for the Kummer parameter of $b$. By the {\em core of $M$}, written $\core(M)$, we mean the $k[x]$-submodule of $M$ generated by
\[ \left\{ x^{-\alpha(b)} b: b\in B\right\}.\]
By {\em projection into the core,} we mean the map of $k[x]$-modules which sends $b$ to $x^{-\alpha(b)}b$. 

It is routine to verify the following claims:
\begin{enumerate}
\item Using Propositions \ref{kummer modules iso 1} and \ref{kummer modules iso 2}, the connection $M \rightarrow M\otimes_{k[x^{\pm 1}]} \Omega^1_{k[x^{\pm 1}]/k}$ restricts to a connection $\core(M) \rightarrow \core(M)\otimes_{k[x]} \Omega^1_{k[x]/k}$. 
\item The core of $M$, as a module with connection, is independent of the choice of Kummer basis for $M$.
\item Inverting $x$, the module with connection $\core(M)\otimes_{k[x]}k[x^{\pm 1}]$ recovers $M$.
\item The core of $M\oplus N$ is naturally isomorphic to $\core(M)\oplus \core(N)$.
\item Projection into the core does {\em not} usually commute with the connections on each side. In particular, projection into the core defines a $k[x]$-module isomorphism from the medial layer of $M$ to the core of $M$, but this isomorphism is usually {\em not} an isomorphism of modules with connection.
\item The core of $M$ is contained in the medial layer of $M$ if and only if all Kummer parameters of $M$ are nonpositive.
\item If $M$ is a $k[x^{\pm 1}]$-algebra equipped with a multiplicative Kummer-type connection with a multiplicative Kummer basis, then the core of $M$ is a $k[x]$-subalgebra of $M$. 
\item If $M$ is a differential graded $k[x^{\pm 1}]$-algebra equipped with a Kummer-type differential graded multiplicative connection with a DG Kummer basis, then the core of $M$ is a differential graded $k[x]$-subalgebra of $M$. 
\end{enumerate}
\end{definition-proposition}

Since the core of $M$ is defined on all of $\mathbb{A}^1_k$, not only $\mathbb{A}^1_k\backslash \{ 0\}$, we may use the connection on $\core(M)$ to get a parallel transport isomorphism from any smooth fiber {\em all the way to the singular fiber}. It does not matter what path we choose, again since the connection is defined on all of $\mathbb{A}^1_k$:
\begin{prop}
Let $M,B,\alpha$ be as in Definition-Proposition \ref{def of core}. For each element $\delta\in k$, we have an equality between the fiber $\core(M)_{\delta}$ of $\core(M)$ at $\delta$ and the $k$-vector space with $k$-linear basis $\{ x^{-\alpha(b)}b : b\in B\}$. Using this equality, for every $\epsilon\in k$ the parallel transport isomorphism 
\begin{align*}
 k\{ x^{-\alpha(b)}b : b\in B\} = \core(M)_{\epsilon}
  &\rightarrow \core(M)_0 = k\{ x^{-\alpha(b)}b : b\in B\}
\end{align*} is the identity map on $k\{ x^{-\alpha(b)}b : b\in B\}$.
\end{prop} 
\begin{proof}
Since $M$ is of Kummer-type with integral parameters, $M$ is isomorphic to a direct sum of Kummer modules $\mathcal{K}_a$ for various integral parameters $a$, and has path-independent parallel transport. From Definition \ref{def of core} and equation \eqref{def of core}, we have that the core of $\mathcal{K}_a$ is the $k[x]$-submodule of $k[x^{\pm 1}]$ generated by $x^{-a}$, with connection satisfying $\nabla(x^{-a}) = 0$. That is, $\core(\mathcal{K}_a)$ is isomorphic to the Artin-Schreier module $\mathcal{L}_0$. By the calculation of parallel transport in an Artin-Schreier module in \cref{Parallel transport...Artin-Schreier}, the parallel transport isomorphism on $\mathcal{L}_0$, from any fiber to any other fiber, is the identity map.
\end{proof}

We remind the reader of the convention that, given a finitely generated projective module $M$ over a subset of $\mathbb{A}^1_k$, for each $\epsilon$ in that subset of $\mathbb{A}^1_k$, we write $M_{\epsilon}$ for the fiber $M\otimes_{k[x]} k[x]/(x-\epsilon)$ of $M$ at $\epsilon$. 

Given a cochain complex of free modules with connection, we have a spectral sequence whose input is a direct sum of copies of the cohomology of the smooth fiber, and whose output is the cohomology of the core:
\begin{definition-proposition}
Let $k$ be a field. Suppose that $C^{\bullet}$ is a cochain complex equipped with a Kummer-type differential graded connection with integral parameters. By the {\em core monodromy spectral sequence of $C^{\bullet}$} we mean the spectral sequence associated to the filtration on $\core(C^{\bullet})$ by powers of $x$. This spectral sequence has the form\footnote{The internal degrees $t$ in this spectral sequence are a little bit subtle. See Example \ref{monodromy and medial ss} for an example which we think does a good job of demonstrating how this internal degree behaves.}
\begin{align}
\label{monodromy ss} E_1^{s,t} 
  \cong H^s\left(  C^{\bullet}_{sm}\right)[x] 
  &\Rightarrow H^s\left( \core(C^{\bullet})\right) \\
\nonumber d_r: E_r^{s,t} &\rightarrow E_r^{s+1,t+r}
\end{align}
where $x\in E_1^{0,1}$, and where $C^{\bullet}_{sm}$ is any smooth fiber of $C^{\bullet}$.
If $C^{\bullet}$ is furthermore a differential graded algebra with differential graded multiplicative connection, then the core monodromy spectral sequence has a product satisfying the Leibniz rule.
\end{definition-proposition}
\begin{proof}
It is standard that the spectral sequence of a suitably filtered cochain complex converges to the cohomology of that complex, and its $E_1$-term is the cohomology of the associated graded cochain complex. In the case of the filtration of $\core(C^{\bullet})$ by powers of $x$, it follows from the freeness of $\core(C^{\bullet})$ as a $k[x]$-module that the associated graded cochain complex is isomorphic to $(\core(C^{\bullet})/x)[x]$, i.e., to $(\core(C^{\bullet}))_0[x]$. It follows from Proposition \ref{kummer modules iso 2} that parallel transport yields an isomorphism of $(\core(C^{\bullet}))_0$ with any smooth fiber $(\core(C^{\bullet}))_{\epsilon}$.
\end{proof}

By the same argument, we get a spectral sequence for the cohomology of the medial layer as well:
\begin{definition}
Let $k$ be a field. Suppose that $C^{\bullet}$ is a cochain complex equipped with a Kummer-type differential graded connection with integral Kummer parameters.
Suppose that the Kummer parameters of the connection on $C^{\bullet}$ are all nonpositive. 
By the {\em medial monodromy spectral sequence of $C^{\bullet}$} we mean the spectral sequence associated to the unique decreasing filtration on $\medial(C^{\bullet}) \supseteq \core(C^{\bullet})$ such that:
\begin{itemize}
\item multiplication by $x$ increases filtration by $1$, and
\item the filtration on $\medial(C^{\bullet})$, restricted to $\core(C^{\bullet})$, coincides with the $x$-adic filtration on $\core(C^{\bullet})$.
\end{itemize}
This spectral sequence again has the form
\begin{align}
\label{monodromy ss 2} E_1^{s,*} 
  \cong H^s\left( C^{\bullet}_{sm}\right)[x] 
  &\Rightarrow H^s\left( \medial(C^{\bullet})\right) \\
\nonumber d_r: E_r^{s,t} &\rightarrow E_r^{s+1,t+r}
\end{align}
where $x\in E_1^{0,1}$, and where a Kummer basis element $b\in C^{s}$ with Kummer parameter $\alpha(b)$ is in $E_1^{s,-\alpha(b)}$. 
Inclusion of the core into the medial layer induces a homomorphism of spectral sequences from \eqref{monodromy ss} to \eqref{monodromy ss 2}.

If $C^{\bullet}$ is furthermore a differential graded algebra with differential graded multiplicative connection, then the medial monodromy spectral sequence has a product satisfying the Leibniz rule.
\end{definition}
\begin{example}\label{monodromy and medial ss}
Here is a simple example to demonstrate the small (but nevertheless important and useful, e.g. in the proof of Theorem \ref{main local inv cycles thm}) difference between the core monodromy spectral sequence and the medial monodromy spectral sequence. Let $k$ be a field, and consider the differential graded $k[x]$-algebra $\Lambda^{\bullet}_1$ defined in Definition \ref{def of lambda punc}, with the differential graded multiplicative connection defined in Theorem \ref{main thm 2}. That is, $\Lambda^{\bullet}_1 = k[x,h_{10}]/h_{10}^2$, with zero differential, with $x$ in cohomological degree $0$, with $h_{10}$ in cohomological degree $1$, and with connection $\nabla(h_{10}) = -h_{10}\otimes \frac{dx}{x}$ on $\Lambda^{\bullet}_1\otimes_{k[x]}k[x^{\pm 1}]$. The following table depicts the medial layer of $\Lambda^{\bullet}_1$, the core of $\Lambda^{\bullet}_1$, and the filtrations on each which yield their respective monodromy spectral sequences:

\noindent\begin{tabular}{ccc}
 Filtration \hspace{7pt}
  & \hspace{7pt} $k$-linear basis for $\medial(\Lambda_1^{\bullet})$ \hspace{7pt}
  & \hspace{7pt} $k$-linear basis for $\core(\Lambda_1^{\bullet})$ \\
\hline
 -1
  & $h_{10}$
  & $ $\\
 0
  & $1,xh_{10}$
  & $1,xh_{10}$\\
 1
  & $x$
  & $x,x^2h_{10}$\\
 2
  & $x^2,xh_{10}$
  & $x^2,x^3h_{10}$\\
 3
  & $x^3,x^2h_{10}$
  & $x^3,x^4h_{10}$\\
 \vdots & \vdots & \vdots
\end{tabular}

The inclusion of the core into the medial layer sends each element in $\core(\Lambda_1^{\bullet})$ to the element with the same name in $\medial(\Lambda_1^{\bullet})$. Consequently the map from the core monodromy spectral sequence to the medial monodromy spectral sequence is an isomorphism on $E_1^{s,t}$ for all $t\geq 0$, but it is not surjective on $E_1^{1,-1}$. 
\end{example}

\subsection{A derived local invariant cycles theorem}
\label{The monodromy fixed-points...}

\begin{lemma}\label{monodromy and fibers}
Let $k$ be a field. Suppose that $M$ is a free $k[x^{\pm 1}]$-module with a Kummer-type connection. We write $B$ for a Kummer basis for $M$, and for each $b\in B$, we write $\alpha(b)$ for its Kummer parameter. Let $D$ be a positive integer, and suppose that each Kummer parameter $\alpha(b)$ is a rational number whose denominator is a divisor of $D$. Suppose that $k$ contains a primitive $D$th root of unity. 
Let $M^T$ denote the $k[x^{\pm 1}]$-submodule of $M$ spanned by the elements $b\in B$ such that $\alpha(b)$ is an integer. Then, for each nonzero $\epsilon\in k$, the fiber $(M^T)_{\epsilon}$ of $M^T$ coincides with the fixed-points $(M_{\epsilon})^T$ of the action of the monodromy operator $T$ on $M_{\epsilon}$.

\end{lemma}
\begin{proof} 
This follows from the calculation of the monodromy operator on fibers of Kummer modules in \cref{Parallel Kummer analytic at 0...}.
\end{proof}

Given a $k[x]$-module $M$ with an appropriate Kummer-type connection on $M\otimes_{k[x]}k[x^{\pm 1}]$, Lemma \ref{monodromy and fibers} tells us that taking the smooth fiber---i.e., the fiber at any nonzero point $\epsilon\in k$---commutes with taking the fixed-points of the Picard-Lefschetz operator $T$. 

It is also true that taking the singular fiber---i.e., the fiber at $0\in k$---commutes with taking the Picard-Lefschetz fixed points. However, there is something less obvious about that claim. At a glance, one perhaps would not expect $T$ to even be {\em defined} on the singular fiber: a Kummer-type connection is defined only on $\mathbb{A}^1_k \backslash \{0\}$, not on $\mathbb{A}^1_k$, so it does not yield a parallel transport isomorphism from the singular fiber to any other fiber, and in particular, it does not yield any parallel transport isomorphism from the singular fiber to itself. So the usual construction of $T$ on a smooth fiber (via parallel transport along a loop based at that fiber) cannot be carried out for the singular fiber.

Nevertheless, since the connection $\nabla$ was assumed to be Kummer-type, there is a natural way to define $T$ on the singular fiber. The calculation of $T$ on any smooth fiber of a Kummer-type module in \eqref{map 11203a} in \cref{Parallel Kummer analytic at 0...} makes sense {\em even on the singular fiber.} We take \eqref{map 11203a} as the definition of $T$ on the singular fiber, i.e., 
\begin{itemize}
\item we let $D$ be the least common multiple of the denominators of the Kummer parameters of $M$, 
\item we fix a primitive $D$th root of unity $\omega$ in $k$,
\item and, for each Kummer basis element $b$ with Kummer parameter $\alpha(b) = r/D$, we let $T(b) = b\cdot \omega^r$. 
\end{itemize}
Consequently $\nabla$ defines a $\mathbb{Z}/D\mathbb{Z}$-grading on $M$, by letting the degree of a Kummer basis element $b$ with Kummer parameter $r/D$ be $r$. 
Call this grading the {\em Kummer grading} on $M$. Then, on each fiber of $M$---whether smooth or singular---the fixed points of $T$ consists precisely of those elements in Kummer degree zero. We will write $M^T$ for the fixed points of the Picard-Lefschetz operator on $M$ itself. Consequently $M^T$ is a canonical Kummer-type $k[x]$-submodule of $M$ with integral Kummer parameters. This means that we may speak of the core of a Kummer-type module $M$ with {\em rational} integral parameters, not necessarily {\em integral} Kummer parameters: simply take the core of the Picard-Lefschetz fixed-points $M^T$.

We need one more definition before stating the main theorem of this section, Theorem \ref{main local inv cycles thm}.
\begin{definition}\label{def of core-homogeneity}
A $k[x]$-module $M$ with Kummer-type differential graded connection will be called {\em core-homogeneous} if the differential on $M$ restricts to a differential on $\core(M^T)$ which is strictly compatible\footnote{In \cite{MR0498551}, Deligne defines ``strict compatibility'' in the following way: a map $d$ is {\em strictly compatible} with a given filtration if the canonical map $\coim(d)\rightarrow\im(d)$ is an isomorphism of filtered objects. In our setting, the most practical way to verify this condition is usually to check that, given a Kummer basis element $b$, {\em every} term in the expansion of $d(b)$ as a linear combination of Kummer basis elements has the same filtration as $b$.} with the $x$-adic filtration.
\end{definition}

\begin{theorem}\label{main local inv cycles thm}
Let $k$ be an infinite field. Let $A^{\bullet}$ be a differential graded $k[x]$-algebra equipped with a differential graded multiplicative finite Kummer-type connection on $A^{\bullet}\otimes_{k[x]}k[x^{\pm 1}]$, with rational, nonpositive Kummer parameters. Suppose that $A^{\bullet}$ is core-homogeneous. Then the cohomology of the Picard-Lefschetz fixed-points on the singular fiber is isomorphic, as a graded vector space, to the cohomology of the Picard-Lefschetz fixed-points on any smooth fiber $A^{\bullet}_{sm}$:
\begin{equation}
\label{main iso 97088}  H^*\left(A^{\bullet}_{sm}\right)^T \cong H^*\left((A^{\bullet}_{sm})^T\right) \cong H^*\left((A^{\bullet}_{0})^T\right) \cong H^*\left(A^{\bullet}_0\right)^T.
\end{equation}
\end{theorem}
\begin{proof}
Since $A^{\bullet}\otimes_{k[x]}k[x^{\pm 1}]$ is of {\em finite} Kummer-type, it is free and of finite rank as a $k[x^{\pm 1}]$-module. Consequently $H^*(A^{\bullet})[x^{-1}] \cong H^*(A^{\bullet}\otimes_{k[x]}k[x^{\pm 1}])$ is a finitely-generated $k[x^{\pm 1}]$-module. In particular, there exist only finitely many elements $\epsilon\in k^{\times}$ such that the $k[x]$-module $H^*(A_{\bullet})$ has nontrivial $(x-\epsilon)$-torsion. Since our connection is a differential graded multiplicative connection, by Observation \ref{structured parallel transport} we have an isomorphism of DGAs between $A^{\bullet}_{\delta}$ and $A^{\bullet}_{\epsilon}$ for any nonzero $\delta,\epsilon\in k$, so we are free to choose some particular element $\delta\in k$ and to use $A^{\bullet}_{\delta}$ as our model for the smooth fiber $A^{\bullet}_{sm}$ of $A^{\bullet}$. Here is how we will choose such an element: since $k$ is infinite, there must exist some element $\delta\in k^{\times}$ such that $H^*(A_{\bullet})$ is $(x-\delta)$-torsion free. We will fix such an element $\delta\in k$, and we will use $A^{\bullet}_{\delta}$ as our model for the smooth fiber $A^{\bullet}_{sm}$ of $A^{\bullet}$.

The advantage of making this choice of $\delta$ is that, since $H^*\left( \medial\left((A^{\bullet})^T\right)\right)$ is $(x-\delta)$-torsion-free, the universal coefficient spectral sequence 
\begin{align*}
 E_2^{*,*} &\cong \Tor^{k[x^{\pm 1}]}_*\left( H^*\left( \medial\left((A^{\bullet}[x^{\pm 1}])^T\right)\right), k[x^{\pm 1}]/(x-\delta)\right) \\ & \Rightarrow H^*\left(\medial\left((A^{\bullet}[x^{\pm 1}])^T\right)\otimes_{k[x^{\pm 1}]} k[x^{\pm 1}]/(x-\delta) \right)
\end{align*}
collapses at $E_2$, yielding the isomorphism\footnote{In the statement of the theorem, we made a finiteness assumption on the Kummer-type connection, and we assumed that $k$ is infinite. These assumptions let us make the argument given in this paragraph and the previous paragraph, which yields the isomorphism \eqref{iso 97089}, i.e., yields that cohomology commutes with taking the smooth fiber. If one already has the isomorphism \eqref{iso 97089} by some other means, then the infiniteness assumption on the field $k$ can be dropped.}
\begin{align}
\label{iso 97089} H^*\left( \medial\left((A^{\bullet}[x^{-1}])^T\right)_{sm}\right) & \rightarrow H^*\left(\medial\left((A^{\bullet}[x^{-1}])^T\right)\right)\otimes_{k[x^{\pm 1}]} k[x^{\pm 1}]/(x-\delta).
\end{align}

From very general results about spectral sequences of filtered chain complexes (e.g. Proposition 1.3.2 of \cite{MR0498551}), core-homogeneity is equivalent to collapse of the core monodromy spectral sequence at $E_1$. The $E_1$-term of the core monodromy spectral sequence maps injectively into the $E_1$-term of the medial monodromy spectral sequence, and $x$ is an infinite cocycle (i.e., $d_r(x)=0$ for all $r$) in both spectral sequences.

Suppose, by contrapositive, that there is a nonzero differential in the medial monodromy spectral sequence. Choose the least $r$ such that there is a nonzero $d_r$-differential in that spectral sequence. Then the $E_r$-page of the medial monodromy spectral sequence is isomorphic to the $E_1$-page, hence is a free $k[x]$-module with the property that, for each bihomogeneous element $w$, there exists some integer $n$ such that $x^nw$ is in the image of the map from the core monodromy $E_r$-page to the medial monodromy $E_r$-page. Consequently, if we have a nonzero differential $d_r(y) = z$ in the medial monodromy spectral sequence for some $y,z$, then we may choose an integer $n$ large enough for $x^ny$ and $x^nz$ to be in the image of the map from the core monodromy $E_r$-page. Since $d_r(x^ny) = x^nz$, this contradicts the immediate collapse of the core monodromy spectral sequence. 

Hence the medial monodromy spectral sequence must collapse at $E_1$ as well, yielding isomorphism \eqref{iso 97090} in the chain of isomorphisms
\begin{align}
 \label{iso 97086} H^*\left(A^{\bullet}_{sm}\right)^T
 &\cong H^*\left((A^{\bullet}_{sm})^T\right) \\
 \nonumber  &\cong H^*\left(\left((A^{\bullet})^T\right)_{sm}\right) \\
\nonumber  &\cong H^*\left(\medial\left(((A^{\bullet}[x^{-1}])^T)\right)_{sm}\right) \\
\nonumber  &\cong H^*\left( \medial\left((A^{\bullet}[x^{-1}])^T\right)\right) \otimes_{k[x]}k[x]/(x-\delta) \\
\label{iso 97090}  &\cong H^*\left( \medial\left((A^{\bullet}[x^{-1}])^T\right)_0[x]\right) \otimes_{k[x]}k[x]/(x-\delta) \\
\nonumber  &\cong H^*\left( \medial\left((A^{\bullet}[x^{-1}])^T\right)_0\right)[x] \otimes_{k[x]}k[x]/(x-\delta) \\
\nonumber  &\cong H^*\left( \medial\left((A^{\bullet}[x^{-1}])^T\right)_0\right)\\
\nonumber  &\cong H^*\left( (A^{\bullet}_0)^T\right)\\
\label{iso 97091}  &\cong H^*\left( A^{\bullet}_0\right)^T.
\end{align}
Isomorphisms \eqref{iso 97086} and \eqref{iso 97091} are simply a consequence of the $T$-fixed-points being the degree $0$ summand of the Kummer grading on the cochain complex $A^{\bullet}$, so taking $T$-fixed-points commutes with taking cohomology.
\end{proof}

The following corollary of Theorem \ref{main local inv cycles thm} is curiously similar to the local invariant cycles theorem, and it is the reason we have chosen to refer to Theorem \ref{main local inv cycles thm} as a ``derived local invariant cycles theorem.''\footnote{Surely there are more general and more powerful theorems one could prove, which might also be recognizable as derived versions of the local invariant cycles theorem. We would be pleased to see further developments along such lines. We have chosen to develop the theory in the level of generality in this paper because we think the resulting theory is relatively concrete and accessible, and it is powerful enough to prove the main result, the calculation of the cohomology of the extended Morava stabilizer group with trivial coefficients in Theorem \ref{letteredthm 1}.}
\begin{corollary}\label{local inv cycles cor}
Let $k,A^{\bullet}$ be as in Theorem \ref{main local inv cycles thm}. Then the cohomology of the singular fiber surjects on to the Picard-Lefschetz fixed points of the cohomology of the smooth fiber:
\begin{align*}
 H^*\left((A^{\bullet})_{0}\right) &\twoheadrightarrow H^*\left((A^{\bullet})_{sm}\right)^T.
\end{align*}
\end{corollary}
\begin{proof}
Since $(A^{\bullet})^T$ is the degree zero summand in the Kummer grading on the cochain complex $A^{\bullet}$, the inclusion map $(A^{\bullet})^T \hookrightarrow A^{\bullet}$ has a canonical retraction $A^{\bullet}\rightarrow (A^{\bullet})^T$. Compose the isomorphism \eqref{main iso 97088} with the map induced in cohomology by the projection $A^{\bullet}_{0} \rightarrow (A^{\bullet}_{0})^T$.
\end{proof}

\subsection{Cohomology of the singular and smooth fibers in the deformed Ravenel model}
\label{Cohomology of the singular...}

Recall that, for a field $k$ of characteristic $p>0$ and a positive integer $n$, the deformed Ravenel model $\Lambda^{\bullet}_n$ is the differential graded $k[x]$-algebra defined in Definition \ref{def of lambda punc}. Recall also that, in Theorem \ref{main thm 2}, $\Lambda^{\bullet}_n \otimes_{k[x]}k[x^{\pm 1}]$ was equipped with the differential graded multiplicative finite Kummer-type connection $\nabla$ determined by the rule
\begin{align*}
 \nabla(h_{i,j}) &= -\frac{i}{n} h_{i,j}\otimes \frac{dx}{x},
\end{align*}
as long as the characteristic of $k$ does not divide $n$.
Finally, recall from Proposition \ref{def of Adot} that the fiber of $\Lambda^{\bullet}_n$ at $1$ is the Chevalley-Eilenberg DGA of the Lie algebra $\mathfrak{gl}_n(k)$, while the singular fiber of $\Lambda^{\bullet}_n$ is the Chevalley-Eilenberg DGA of Ravenel's model $L(n,n)$ for the height $n$ Morava stabilizer group. 

Let $CE^{\bullet}(\mathfrak{gl}_n(k))$ be the Chevalley-Eilenberg DGA of the Lie algebra $\mathfrak{gl}_n(k)$. Decreasingly filter $CE^{\bullet}(\mathfrak{gl}_n(k))$ by first subscript\footnote{To be clear, here it is important that the first subscript on the $h_{i,j}$ elements is an element of $Z_n = \{ 1, \dots ,n\}$ but {\em not} regarded only as a residue class in $\mathbb{Z}/n\mathbb{Z}$. For example, $h_{1,0}h_{n-1,1}$ is in first-subscript filtration $n$, {\em not} zero.}, i.e., $h_{i_1,j_1}\dots h_{i_m,j_m}$ is in filtration $i_1 + \dots + i_m$. By inspection of the formula \eqref{diff 0945} for the differential in $CE^{\bullet}(\mathfrak{gl}_n(k))$, the associated graded $E_0^{FS}CE^{\bullet}(\mathfrak{gl}_n(k))$ of the first-subscript filtration on $CE^{\bullet}(\mathfrak{gl}_n(k))$ is the $\epsilon=0$ case of the differential \eqref{lambda diff formula} on $\Lambda^{\bullet}_{n,\epsilon}$, i.e., $E_0^{FS}CE^{\bullet}(\mathfrak{gl}_n(k))$ is the Chevalley-Eilenberg complex $CE^{\bullet}(L(n,n))$. 

Consequently we have a spectral sequence
\begin{align}
\label{ss 37494} E_1^{s,t,u} \cong H^{s,t,u}(L(n,n);k) &\Rightarrow H^{s,u}(\mathfrak{gl}_n(k);k)\\
\nonumber d_r: E_r^{s,t,u} &\rightarrow E_r^{s+1,t-r,u},
\end{align}
where $s$ is the cohomological degree, $t$ the first-subscript filtration degree, and $u$ the internal degree. So, for example, $h_{i,j}$ is in tridegree $(1,i,2(p^i-1)p^j)$. 

The sub-DGA of $CE^{\bullet}(L(n,n))$ consisting of the elements in internal degrees divisible by $2(p^n-1)$ is precisely the critical complex $cc^{\bullet}(L(n,n))$ defined in \Cref{The reduction to Lie algebra cohomology}. Meanwhile, the sub-DGA of $CE^{\bullet}(\mathfrak{gl}_n(k))$ consisting of the elements in internal degrees divisible by $2(p^n-1)$ is precisely the critical complex $cc^{\bullet}(\mathfrak{gl}_k(n))$. Hence spectral sequence \eqref{ss 37494} splits as a direct sum of spectral sequences, in which one summand is of the form
\begin{align}
\label{ss 37495} E_1^{s,t,u} \cong H^{s,t,u}(cc^{\bullet}(L(n,n))) &\Rightarrow H^{s,u}(cc^{\bullet}(\mathfrak{gl}_n(k)))\\
\nonumber d_r: E_r^{s,t,u} &\rightarrow E_r^{s+1,t-r,u}.
\end{align}

\begin{theorem}\label{ss collapse thm 304}
Let $k$ be any field of characteristic $p$, and let $n$ be a positive integer such that $p>2n^2$. Then spectral sequence \eqref{ss 37495} collapses at the $E_1$-page.
\end{theorem}
Before we prove Theorem \ref{ss collapse thm 304}, we remark that the claim would {\em not} be true if we had not restricted our attention to the critical complex by considering only the summand \eqref{ss 37495} in spectral sequence \eqref{ss 37494}. If $n>1$, then spectral sequence \eqref{ss 37494} has very many nonzero differentials---but, by Theorem \ref{ss collapse thm 304}, none in internal degrees divisible by $2(p^n-1)$.   
\begin{proof}[Proof of Theorem \ref{ss collapse thm 304}]
Spectral sequence \eqref{ss 37495} over a given field extension $k$ agrees with the tensor product of $k$, over $\mathbb{F}_p$, with spectral sequence \eqref{ss 37495} over $\mathbb{F}_p$. If we show that spectral sequence \eqref{ss 37495} collapses with no nonzero differentials in the case $k = \overline{\mathbb{F}}_p$, then there can be no nonzero differentials in the case $k = \mathbb{F}_p$, and hence there can be no nonzero differentials for any extension of $\mathbb{F}_p$ either.

Consequently we need only prove the theorem in the case $k = \overline{\mathbb{F}}_p$.
Both $L(n,n)$ and $\mathfrak{gl}_n(k)$ are finite-dimensional Lie $k$-algebras. If there were a nonzero differential in spectral sequence \eqref{ss 37495}, then the $k$-linear dimension of $H^*(cc^{\bullet}(\mathfrak{gl}_n(k)))$ would be strictly smaller than that of $H^*(cc^{\bullet}(L(n,n)))$. 

We now argue that this does not happen. Apply Theorem \ref{main local inv cycles thm} in the case where $k = \overline{\mathbb{F}}_p$, where $A^{\bullet} = \Lambda^{\bullet}_n$, and where $A^{\bullet}\otimes_{k[x]} k[x^{\pm 1}]$ is equipped with the multiplicative differential-graded Kummer-type connection given by letting $\nabla(h_{i,j}) = -\frac{i}{n}h_{i,j}\otimes \frac{dx}{x}$. By Theorem \ref{main thm 1}, this is the connection in which the $h$-basis is a Kummer basis and the parallel transport isomorphisms are the $\sigma$-equivariant $h$-diagonal isomorphisms constructed in Proposition \ref{h-diag isos}. By Theorem \ref{main thm 1}, with this connection, the Picard-Lefschetz fixed point DGA $(\Lambda^{\bullet}_n)^T$ is the first-subscript complex $FSC^{\bullet}_n$, described in Definition \ref{crit complex def-prop}: $FSC^{\bullet}_n$ is the sub-DGA of $(\Lambda^{\bullet}_n)^T$ consisting of all $k$-linear combinations of products of elements $h_{i,j}$ whose first subscripts sum to a multiple of $n$. 

In this connection, the $h$-basis is a Kummer basis, and the Kummer parameter of $h_{i,j}$ is $-i/n$. Clearly the Kummer parameters are rational and nonpositive. The only hypothesis of Theorem \ref{main local inv cycles thm} left to be checked is that $\Lambda^{\bullet}_n$ is core-homogeneous. Write $\tilde{h}_{i,j}$ for the projection of $h_{i,j}$ into the core, as defined in Definition-Proposition \ref{def of core}. Then the core is generated, as a $k[x]$-module, by $k[x]$-linear combinations of products of elements of the form $\tilde{h}_{i,j}$. Hence, if we can show that every monomial in $d(\tilde{h}_{i,j})$ has the same $x$-adic filtration degree as $\tilde{h}_{i,j}$ itself---namely, zero---then $\Lambda^{\bullet}_n$ is core-homogeneous. 
We have equalities
\begin{align*}
    d(\tilde{h}_{i,j}) &= d(x^{\frac{i}{n}}h_{i,j}) \\
    &= x^{\frac{i}{n}}d(h_{i,j}) \\
    &= x^{\frac{i}{n}}\left( \sum_{\ell =1}^{i-1} x^{-\ell/n}\tilde{h}_{\ell,j}x^{(\ell-i)/n}\tilde{h}_{i-\ell,j+\ell} + x\sum_{\ell=i}^n x^{-\ell/n}\tilde{h}_{\ell,j}x^{(\ell-i-n)/n}\tilde{h}_{i-\ell+n,j+\ell}\right) \\
    &= \left( \sum_{\ell =1}^{i-1} \tilde{h}_{\ell,j}\tilde{h}_{i-\ell,j+\ell} + \sum_{\ell=i}^n \tilde{h}_{\ell,j}\tilde{h}_{i-\ell+n,j+\ell}\right) .
    \end{align*}
Hence, if $\tilde{z}$ is an element of the core, and none of the monomials in $x$ in the $h$-basis are multiples of $x$, then $d(\tilde{z})$ has the same property. That is, the differential on the core is strictly compatible with the $x$-adic filtration, i.e., $\Lambda^{\bullet}_n$ is core-homogeneous.

We have shown that all the hypotheses of Theorem \ref{main local inv cycles thm} are satisfied. The conclusion of the theorem then gives us that $H^m(FSC^{\bullet}_{n,0})$ has the same vector space dimension as $H^m(FSC^{\bullet}_{n,1})$, for each $m$. Now we use Proposition \ref{critical complex is a subset of first-subscript complex}: the critical complex $cc^{\bullet}_n$ is a sub-DGA of the first-subscript complex $FSC^{\bullet}_n$. Filtering both DGAs by first subscript, we get a map of spectral sequences
\begin{equation}
\label{ss map 37498}\xymatrix{
 H^{*,*,*}(cc^{\bullet}_{n,0}) \ar[d]\ar@{=>}[r] & H^{*,*}(cc^{\bullet}_{n,1}) \ar[d]\\
 H^{*,*,*}(FSC^{\bullet}_{n,0}) \ar@{=>}[r] & H^{*,*}(FSC^{\bullet}_{n,1}) 
}
\end{equation}
whose domain is the spectral sequence \eqref{ss 37494}. The map of spectral sequences \eqref{ss map 37498} is a split inclusion, since the DGA $cc^{\bullet}_n$ is precisely the summand of $FSC^{\bullet}_n$ consisting of elements in internal degrees divisible by $2(p^n-1)$. Theorem \ref{main local inv cycles thm} has told us that $H^{*}(FSC^{\bullet}_{n,0})$ and $H^{*}(FSC^{\bullet}_{n,1})$ are isomorphic as graded vector spaces, hence have the same (finite) dimension in each cohomological degree. Consequently there can be no nonzero differentials in the codomain of the map of spectral sequences \eqref{ss map 37498}. Since \eqref{ss 37494} is a summand in that codomain spectral sequence, there also cannot be nonzero differentials in spectral sequence \eqref{ss 37494}, as claimed.   
\end{proof}

Consequently we have the desired calculation of the cohomology of Ravenel's Lie model for the Morava stabilizer group, in internal degrees divisible by $2(p^n-1)$:
\begin{corollary}\label{main cor 1409}
Let $p>2n^2$. Then the subring of $H^*(L(n,n);\mathbb{F}_p)$ consisting of elements in internal degrees divisible by $2(p^n-1)$ is isomorphic, as a graded ring, to the associated graded of some finite filtration on $H^*(U(n);\mathbb{F}_p)$. In particular, the $\mathbb{F}_p$-linear subspace of $H^*(L(n,n);\mathbb{F}_p)$ consisting of elements in internal degrees divisible by $2(p^n-1)$ is isomorphic, as a a graded $\mathbb{F}_p$-vector space, to $H^*(U(n);\mathbb{F}_p)$.
\end{corollary}
\begin{proof}
Consequence of Theorem \ref{ss collapse thm 304}, using Proposition \ref{gln cohomology} together with Theorem \ref{gln and cc cohomology} to identify the cohomology of $cc^{\bullet}(\mathfrak{gl}_n(\mathbb{F}_p))$ with the cohomology of $U(n)$.  
\end{proof}
We suspect, but have not seriously tried to prove (except for the low heights $n\leq 4$), that there are no multiplicative filtration jumps in the abutment of the spectral sequence \eqref{ss 37494} in internal degrees divisible by $2(p^n-1)$, so that in fact $H^*(U(n);k)$ is isomorphic as a graded $k$-algebra to the graded ring of elements of $H^*(L(n,n);k)$ in internal degrees divisible by $2(p^n-1)$.

\begin{corollary}\label{main cor 1} For each positive integer $n$ and each prime $p>2n^2$, we have an isomorphism of graded $\mathbb{F}_p$-vector spaces
\begin{align*}
H^*(cc^{\bullet}_{n,0}) &\cong H^*(cc^{\bullet}_{n,1}).
\end{align*}
\end{corollary}

Finally, we have the main corollary:
\begin{corollary}\label{main cor 0239}
Conditionally upon the main result of the preprint \cite{salch2023ravenels}: for all heights $n$ and for $p>>n$, the cohomology $H^*(\extendedGn;\mathbb{F}_{p^n})$ of the height $n$ extended Morava stabilizer group, with $\extendedGn$ acting on the coefficients $\mathbb{F}_{p^n}$ as described in \cref{What this paper is about}, is isomorphic as a graded $\mathbb{F}_p$-algebra to the associated graded of a finite filtration on the cohomology $H^*(U(n);\mathbb{F}_p)$ of the unitary group. In particular, $H^*(\extendedGn;(\mathbb{F}_{p^n})_{triv})$ is isomorphic to $H^*(U(n);\mathbb{F}_p)$ as a graded $\mathbb{F}_p$-vector space.
\end{corollary}

We note that the arguments given in the proof of Theorem \ref{ss collapse thm 304} and its corollaries yield a calculation not only of the cohomology of the critical complex $cc^{\bullet}(L(n,n))$ for appropriately large primes, but also a calculation of the {\em first-subscript complex} $FSC^{\bullet}(L(n,n))$ for all $n$ and for appropriately large primes. For primes $p>n+1$, the cohomology of the Chevalley-Eilenberg DGA $CE^{\bullet}(L(n,n))$ is the input for the Ravenel-May spectral sequence which converges to the cohomology $H^*(\strict\Aut(\mathbb{G}_{1/n});\mathbb{F}_p)$ of the height $n$ strict Morava stabilizer group scheme. By the main theorem in the preprint \cite{salch2023ravenels}, the Ravenel-May spectral sequence collapses for $p>>n$. Hence a calculation of the cohomology of a sub-DGA of $CE^{\bullet}(L(n,n))$ is a calculation of a portion of $H^*(\strict\Aut(\mathbb{G}_{1/n});\mathbb{F}_p)$, for large primes; the larger the sub-DGA, the more of the cohomology of $H^*(\strict\Aut(\mathbb{G}_{1/n});\mathbb{F}_p)$ has been computed. 

If $p>2n^2$, then by Proposition \ref{critical complex is a subset of first-subscript complex}, the critical complex is a sub-DGA of the first-subscript complex. In fact the first-subscript complex becomes quite a bit larger than the critical complex, as $n$ grows. We include some $\mathbb{F}_p$-linear dimension counts for $p>>n$, to demonstrate their relative sizes:
\begin{equation}\label{dims table 1}
\begin{array}{llll}
\mbox{n}          & \dim_{\mathbb{F}_p}cc^{\bullet}(L(n,n)) & \dim_{\mathbb{F}_p}FSC^{\bullet}(L(n,n)) & \dim_{\mathbb{F}_p}CE^{\bullet}(L(n,n)) = 2^{(n^2)}\\
1 & 2 & 2 & 2 \\
2 & 8      
      & 8 & 16 \\ 
3 & 80     
      & 176  
          & 512 \\
4 & 2,432   
      & 16,384    
          & 65,536 \\
5 & 247,552 
      & 6,710,912 
          & 33,554,432 
\end{array}\end{equation}
The $\mathbb{F}_p$-linear dimension counts in table \eqref{dims table 1} are each divisible by $2^n$, since both the critical complex and the first-subscript complex are differential graded $\Lambda(h_{n,0},\dots ,h_{n,n-1})$-algebras, free as modules over $\Lambda(h_{n,0},\dots ,h_{n,n-1})$. Dividing by the factor of $2^n$ in each entry, we get the following counts\footnote{It has not escaped our attention that the $\mathbb{F}_p$-linear dimension of $cc^{\bullet}(L(4,4))$ divided by $2^4$ is $152$, which is precisely equal to the $\mathbb{F}_p$-linear dimension of $H^*(L(3,3);\mathbb{F}_p)$, calculated by Ravenel \cite{ravenel1977cohomology}, \cite[section 6.3]{MR860042}. As far as we know, this is simply a curious coincidence.}:
\begin{equation}\label{dims table 2}
\begin{array}{llll}
\mbox{n}          & \dim_{\mathbb{F}_p}cc^{\bullet}(L(n,n))/2^n & \dim_{\mathbb{F}_p}FSC^{\bullet}(L(n,n))/2^n & \dim_{\mathbb{F}_p}CE^{\bullet}(L(n,n))/2^n\\
1 & 1 & 1 & 1 \\
2 & 2 & 2 & 4 \\ 
3 & 10 & 22 & 64 \\
4 & 152 & 1024 & 4096 \\
5 & 7,736 & 209,716 & 1,048,576 
\end{array}\end{equation}
The dimension of $\dim_{\mathbb{F}_p}cc^{\bullet}(L(n,n))$ is the number of labelled Eulerian directed graphs with $n$ nodes (e.g. see \cite{MR1099250} or \cite[A007080]{oeis}), while its quotient by $2^n$ is the number of unlabelled Eulerian directed graphs with $n$ nodes \cite[A229865]{oeis}. We know no comparable description of $\dim_{\mathbb{F}_p}FSC^{\bullet}(L(n,n))$ in terms of an already-studied combinatorial sequence. 

The $\mathbb{F}_p$-linear dimension of the first-subscript complex is\footnote{This claim is true for low $n$ by inspection of the table \eqref{dims table 1}. Here is an argument for a version of this claim for large $n$ as well. Consider a multiset $M_n$ consisting of the integers $1,2,\dots ,n$, with each integer appearing with multiplicity $n$, so the multiset has cardinality $n^2$. It is not difficult to see that the sub-multisets of $M_n$ comprise an $\mathbb{F}_p$-linear basis for $CE^{\bullet}(L(n,n))$, since the latter has the $\mathbb{F}_p$-linear basis consisting of monomials in the elements $h_{i,j}$ with $i,j\in \{1,\dots ,n\}$, and writing $i$ in place of $h_{i,j}$, one has a bijection between such monomials and sub-multisets of $M_n$. 

Under this bijection, an $\mathbb{F}_p$-linear basis for $FSC^{\bullet}_n$ is given by those sub-multisets whose members sum to a multiple of $n$. Obviously there are $2^{(n^2)}$ sub-multisets of $M_n$, and for each one, its members sum to some residue modulo $n$. If those sub-multiset sums are asymptotically (as $n\rightarrow \infty$) uniformly distributed among the residue classes modulo $n$, then one would have the asymptotic result $\lim_{n\rightarrow\infty} \frac{\dim_{\mathbb{F}_p}CE^{\bullet}(L(n,n))}{\dim_{\mathbb{F}_p}FSC^{\bullet}_n} = 1/n$. These sub-multiset sums are indeed uniformly distributed, by a combinatorial argument due to D. Frohardt.} approximately $1/n$ times the $\mathbb{F}_p$-linear dimension of the Chevalley-Eilenberg complex $L(n,n)$. As one sees from \eqref{dims table 1} and \eqref{dims table 2}, this is {\em much} larger than the critical complex, i.e., much larger than the part of the Chevalley-Eilenberg complex in internal degrees divisible by $2(p^n-1)$. 
Hence the arguments given in the proof of Theorem \ref{ss collapse thm 304} and its corollaries tell us that {\em approximately $1/n$ of the mod $p$ cohomology of the height $n$} strict {\em Morava stabilizer group, at large primes, is isomorphic to the exterior algebra $E(x_1, \dots ,x_{2n-1})$.} 

If $p > \frac{n^2+1}{2}$ and the $p$-primary Smith-Toda complex $V(n-1)$ exists, then for degree reasons, there can be no nonzero differentials in the descent spectral sequence \cite{MR2030586}
\begin{align*} 
 E_2^{s,t} \cong H^s\left(\extendedGn; E(\mathbb{G}_{1/n}\otimes_{\mathbb{F}_p} \mathbb{F}_{p^n})_t(V(n-1))\right) &\Rightarrow \pi_{t-s}L_{K(n)}V(n-1) \\
 d_r: E_r^{s,t} &\rightarrow E_r^{s+r,t+r-1} .\end{align*}
Consequently, for such primes, the argument in the previous paragraph establishes that ``$1/n$ of the $K(n)$-local homotopy groups of $V(n-1)$ are given by $\Lambda_{\mathbb{F}_p}(x_1, \dots ,x_{2n-1})\otimes_{\mathbb{F}_p} K(n)_*$.'' The same statements hold with Johnson-Wilson $E(n)$ in place of Morava $K(n)$, since $L_{K(n)}V(n-1)\simeq L_{E(n)}V(n-1)$.

\subsection{Explicit computed examples at heights $\leq 3$.}

Now for some examples. Height $n=3$ is the illuminating case, but for completeness we begin with the cases $n=1,2$, where there is very little to say.
\begin{example} {\bf Height 1.} The DGA $\Lambda^{\bullet}_1$ is the exterior $k[x]$-algebra on a single class $h_{10}$ in cohomological degree $1$, with trivial boundary, with internal degree $2(p-1)$, and with connection 
\begin{align*} 
 \nabla: \Lambda^{\bullet}_1\otimes_{k[x]} k[x^{\pm 1}] 
  &\rightarrow \Lambda^{\bullet}_1\otimes_{k[x]} k[x^{\pm 1}] \otimes_{k[x^{\pm 1}]} \Omega^1_{k[x^{\pm 1}]/k}\end{align*}
given by $\nabla(h_{10}) = -h_{10}\otimes \frac{dx}{x}$. That is, $\Lambda^{1}_1$ is a Kummer module with Kummer parameter $-1$. 
We have equalities and an isomorphism
\[ FSC^{\bullet}_1 = cc^{\bullet}_1 = \Lambda^{\bullet}_1 \cong \core(\Lambda^{\bullet}_1),\]
each of these DGAs has trivial differential and hence is isomorphic to its own cohomology ring, and the Picard-Lefschetz operator $T$ acts trivially on each.
The fiber at every point $\epsilon\in k$ is the exterior $k$-algebra on $h_{10}$.
\end{example}

\begin{example} {\bf Height 2.} The DGA $\Lambda^{\bullet}_2$ is the exterior $k[x]$-algebra on classes $h_{10},h_{11},h_{20},$ and $h_{21}$ in cohomological degree $1$, with boundary 
\begin{align*}
 d(h_{i,j}) &= \sum_{\ell =1}^{i-1} h_{\ell,j}h_{i-\ell,j+\ell} + x\sum_{\ell=i}^n h_{\ell,j}h_{i-\ell+n,j+\ell}, \end{align*}
with internal degrees given by $\left| h_{i,j} \right| = 2(p^i-1)p^j$, and with connection 
\begin{align*} 
 \nabla: \Lambda^{\bullet}_2\otimes_{k[x]} k[x^{\pm 1}] 
  &\rightarrow \Lambda^{\bullet}_2\otimes_{k[x]} k[x^{\pm 1}] \otimes_{k[x^{\pm 1}]} \Omega^1_{k[x^{\pm 1}]/k}\end{align*}
given by $\nabla(h_{i,j}) = -\frac{i}{2} h_{i,j}\otimes \frac{dx}{x}$. That is, $\Lambda^{1}_2$ is a Kummer-type module with Kummer parameters $\frac{-1}{2}, \frac{-1}{2}, -1$, and $-1$.
The critical complex $cc^{\bullet}_2$ is the free $k[x]$-submodule of $\Lambda^{\bullet}_2$ spanned by all products of $h_{20},h_{21},$ and $h_{10}h_{11}$, i.e., $cc^{\bullet}_2 \cong \Lambda_{k[x]}(h_{20},h_{21},h_{10}h_{11})$. The first-subscript complex $FSC^{\bullet}_2$ is equal to $cc^{\bullet}_2$. The Picard-Lefschetz operator $T$ acts by $T(h_{1,j}) = -h_{1,j}$ and $T(h_{2,j}) = h_{2,j}$.

The cohomology of the singular fiber $H^*(\Lambda^{\bullet}_{2,0})$ is the cohomology of the height 2 Morava stabilizer group at large primes, i.e. \cite{MR0458423} an exterior algebra on generators $h_{10},h_{11},\zeta_2$ in cohomological degree $1$, tensored with a polynomial algebra on generators $g_0 = (h_{20}-h_{21})h_{10}$ and $g_1 = (h_{20} - h_{21})h_{11}$ in cohomological degree $2$, modulo the relations $h_{10}g_1 = - h_{11}g_0$ and $h_{10}g_0 = 0 = h_{11}g_1$ and $g_1^2 = g_0g_1 = g_0^2 = 0$. 

Meanwhile, the results of \cref{Applications to the cohomology...} and of \cref{Parallel Kummer analytic at 0...} established that the cohomology of any smooth fiber of $\Lambda^{\bullet}_{2}$ is isomorphic to the cohomology of $U(2)$, i.e., an exterior algebra on generators $x_1,x_3$ in cohomological degrees $1$ and $3$. The fixed-points of the Picard-Lefschetz operator on $H^*(\Lambda^{\bullet}_{2,0})$ is precisely the subring generated by $\zeta_2$ and $h_{10}g_1 = -h_{11}g_0$, i.e., $x_1$ and $x_3$. 
\end{example}

What really is the difficult part of what we have done in \cref{Monodromy section}? It was establishing that {\em the cohomology of the singular fiber of the critical complex is isomorphic to the cohomology of the smooth fiber of the critical complex.} This was a consequence of the derived local invariant cycles theorem (Theorem \ref{main local inv cycles thm}) together with the comparison of the critical complex and the first-subscript complex in the proof of Theorem \ref{ss collapse thm 304}. But, for $n=1$ and $n=2$, all this work is for nothing: at such low heights, one can check that the singular fiber of the critical complex is simply {\em equal} to the smooth fiber at $\epsilon=1$ of the critical complex. 

One does not really have to put in work until the height $n$ is at least $3$, since at $n=3$ the smooth fiber and singular fiber of the critical complex begin to diverge, and it is a nontrivial task to establish (as we have done) that the two still have isomorphic cohomology, despite no longer being equal to one another. Here is a treatment of the structure of the critical complex at height $3$:
\begin{example}\label{height 3 semilinear example}
Let $p$ be a prime, $p>3$, and let $\omega$ be a primitive $(p^2+p+1)$st root of unity in $\mathbb{F}_{p^n}$.
Consider the differential graded $\mathbb{F}_p[x]$-algebra $\Lambda_3^{\bullet}$. The set of $(p^2+p+1)$th roots of unity in $\mathbb{F}_{p^n}$ is in bijection with the set of Kummer-type connections on $\Lambda_3^{\bullet}\otimes_{\mathbb{F}_p} \mathbb{F}_p(\omega)[x^{\pm 1}]$ whose parallel transport isomorphisms are $h$-diagonal and $\tilde{\sigma}$-equivariant. For the connection corresponding to the primitive $(p^2+p+1)$st root $\omega$, we have
\begin{align}
\label{conn 11} \nabla(h_{i,j}) &= h_{i,j} \otimes -\frac{p^{i+j}-p^j}{p^3-1} \frac{dx}{x}
\end{align}
for all $i,j\in Z_3$. Write $A$ for the differential graded algebra $\Lambda_3^{\bullet}\otimes_{\mathbb{F}_p}k[x^{\pm 1}]$ with the connection $\nabla$, where $k$ is the algebraic closure of $\mathbb{F}_{p}$.

Using Theorem \ref{main thm 2}: for each nonzero $\epsilon\in k$, the Picard-Lefschetz operator $T$ on $A_{\epsilon}$ sends $h_{i,j}$ to $\omega^{-p^j - \dots - p^{i+j-1}}h_{i,j}$. The fixed-point DGA $A^T$ is the critical complex. The critical complex is generated, as an $\mathbb{F}_{p^n}[x]$-algebra, by $h_{3j}$ and $\kappa_{i}:= h_{1,i}h_{2,i+1}$ for all $i\in Z_3$, and by $L_1 := h_{11}h_{12}h_{13}$ and $L_2 := h_{21}h_{22}h_{23}$. Hence we have the presentation for $cc^{\bullet}_3$ as a $\tilde{\sigma}$-equivariant differential graded $\mathbb{F}_{p^n}[x]$-algebra:
\begin{align} 
\nonumber cc^{\bullet}_3 &= A^T \\
 &= \Lambda_{\mathbb{F}_{p^n}[x]}\left( h_{31},h_{32},h_{33},L_1,L_2 \right)\otimes_{\mathbb{F}_{p^n}} \mathbb{F}_{p^n}\left[\kappa_1,\kappa_2,\kappa_3\right] \\
\nonumber & \ \ \ \ \ \ \ \ \mbox{modulo\ relations:} \left( \kappa_{i}^2,\ \kappa_1\kappa_2\kappa_3 + L_1L_2,\ L_i\kappa_j\right) ,
\nonumber \\ d(h_{3i}) &= \kappa_i - \kappa_{i-1} ,\\
\label{d kappa} d(\kappa_{i}) &= -L_1 - xL_2  ,\\
\label{d L1} d(L_1) &= x\left(\kappa_1\kappa_2 + \kappa_2\kappa_3 + \kappa_3\kappa_1\right) ,\\
\label{d L2} d(L_2) &= -\left(\kappa_1\kappa_2 + \kappa_2\kappa_3 + \kappa_3\kappa_1\right),\\
\nonumber \sigma(h_{3,i}) &= h_{3,i+1} ,\\
\nonumber \sigma(\kappa_{i}) &= \kappa_{i+1},\\
\nonumber \sigma(L_1) &= L_1,\\
\nonumber \sigma(L_2) &= L_2.
\end{align}
Equation \ref{d kappa} shows that the differential in a smooth fiber ($x=1$) of the critical complex is indeed different from the differential in the singular fiber ($x=0$) of the critical complex\footnote{The difference in the critical complex between the singular fiber and the smooth fiber is a minor one at height $n=3$, and with a bit of calculation, it is not difficult to write down an explicit isomorphism of DGAs (not just a quasi-isomorphism!) between the smooth fiber and the singular fiber at that height. However, the situation gets worse as $n$ increases: writing down an explicit presentation for the critical complex at height $n=6$, and a quasi-isomorphism between its singular fiber and smooth fiber, is a truly miserable calculation. The purpose of all the techniques involving connections, parallel transport, and the derived local invariant cycles theorem---the whole point of \cref{Monodromy section} of this paper---is that it is an elegant and conceptual way to get isomorphisms in cohomology {\em for all heights $n$.}}.

Consider what happens when we project the elements $h_{3,j}, \kappa_j$, $L_1$, and $L_2$ into the core, in the sense of Definition-Proposition \ref{def of core}, using this connection. We will write $\tilde{x}$ for the projection of $x$ into the core, as defined in Definition-Proposition \ref{def of core}. We then have 
\begin{align*}
 \tilde{L}_1 
  &= x^{-(p^4-p)/(p^3-1)}L_1 \\
  &= x^{-p}L_1,\\
 \tilde{L}_2 
  &= x^{-(p^4-p+p^5-p^2)/(p^3-1)}L_2 \\
  &= x^{-p^2-p}L_2,\mbox{\ and\ by\ a\ similar\ calculation,}\\
 \tilde{h}_{3,j}
  &= x^{-p^j} h_{3,j},\mbox{\ and}\\
 \tilde{\kappa}_{j}
  &= x^{-p^j} \kappa_{j}.
\end{align*}
Consequently, a presentation for the core of $A^T$ is:
\begin{align} 
\nonumber \core(A^T) &= \Lambda_{k[x]}\left( \tilde{h}_{3i},\tilde{L}_1,\tilde{L}_2, : i\in Z_3\right)\otimes_{k} k\left[\tilde{\kappa}_{i}: i\in Z_3\right] \\
\nonumber \mbox{modulo\ relations:} & \left( \tilde{\kappa}_{i}^2,\tilde{\kappa}_1\tilde{\kappa}_2\tilde{\kappa}_3 + \tilde{L}_1\tilde{L}_2\right),\\
\label{d eq2 1} d(\tilde{h}_{3i}) &= \tilde{\kappa}_i - x^{p^{i-1}(1-p)}\tilde{\kappa}_{i-1} ,\\
\label{d eq2 2} d(\tilde{\kappa}_{i}) &= -x^{p(1-p^{i-1})}\tilde{L}_1 - x^{1+p^2-p^i}\tilde{L}_2  ,\\
\label{d eq2 3} d(\tilde{L}_1) &= x^{1+p^2}\tilde{\kappa}_1\tilde{\kappa}_2 + x^{1-p+p^2+p^3}\tilde{\kappa}_2\tilde{\kappa}_3 + x^{1+p^3}\tilde{\kappa}_3\tilde{\kappa}_1 ,\\
\label{d eq2 4} d(\tilde{L}_2) &= -\tilde{\kappa}_1\tilde{\kappa}_2 - x^{p^3-p}\tilde{\kappa}_2\tilde{\kappa}_3 - x^{p^3-p^2}\tilde{\kappa}_3\tilde{\kappa}_1.
\end{align}
From inspection of \eqref{d eq2 1} through \eqref{d eq2 4}, it is clear that $d$ does {\em not} preserve the $x$-adic filtration on the core, i.e., the DGA $\Lambda^{\bullet}_3\otimes_{\mathbb{F}_p} k$ is not core-homogeneous {\em when equipped with the connection } \eqref{conn 11}. Hence Theorem \ref{main local inv cycles thm} does not apply to $\Lambda^{\bullet}_3\otimes_{\mathbb{F}_p} k$ {\em equipped with the connection} \eqref{conn 11}.
\end{example}

Example \ref{height 3 semilinear example} demonstrates that, while the connection \eqref{conn 11} has good properties---its Picard-Lefschetz fixed-points are precisely the critical complex, and by Proposition \ref{h-diag isos}, it is essentially the unique connection on $\Lambda^{\bullet}_n\otimes_{\mathbb{F}_p} k$ whose parallel transport isomorphisms are $\tilde{\sigma}$-equivariant and $h$-diagonal---it has a fatal flaw: it is not core-homogeneous for $n\geq 3$. This is why, in our proof of Theorem \ref{ss collapse thm 304}, we used the {\em other} connection on $\Lambda^{\bullet}_n\otimes_{\mathbb{F}_p} k$ considered in Proposition \ref{h-diag isos}, that is, the connection whose parallel transport isomorphisms are $\sigma$-equivariant and $h$-diagonal. Taking the Picard-Lefschetz fixed-points of that connection yields the first-subscript complex, rather than the critical complex, which necessitated a somewhat delicate spectral sequence argument to compare $FSC^{\bullet}_n$ with $cc^{\bullet}_n$ in the proof of Theorem \ref{ss collapse thm 304}. This was necessary, because it is this latter connection which made $\Lambda^{\bullet}_n\otimes_{\mathbb{F}_p} k$ core-homogeneous, so that the derived local invariant cycles theorem, Theorem \ref{main local inv cycles thm}, could be used.

We now give an explicit description of the structure of $\Lambda^{\bullet}_3$ and its core, using the latter connection:

\begin{example}\label{height 3 linear example}
Let $p$ be a prime, $p>3$, and let $\omega$ be a primitive cube root of unity in some algebraic closure $\overline{\mathbb{F}}_p$ of $\mathbb{F}_p$.
Equip the differential graded $\overline{\mathbb{F}}_p[x]$-algebra $\Lambda_3^{\bullet}$ with the differential graded multiplicative connection $\nabla$ constructed in Theorem \ref{main thm 1}. We have
\begin{align*}
 \nabla(h_{i,j}) &= -\frac{i}{3} h_{i,j} \otimes \frac{dx}{x}
\end{align*}
for all $i,j\in Z_n$. Write $A$ for the differential graded algebra $\Lambda_3^{\bullet}[x^{\pm 1}]$ with the connection $\nabla$.

For each nonzero $\epsilon\in k$, the monodromy operator $T$ on $A_{\epsilon}$ sends $h_{i,j}$ to $\omega^{-i}h_{i,j}$. The fixed-point DGA $A^T$ is the first-subscript complex, by Theorem \ref{main thm 1}. The first-subscript complex is generated, as an $\overline{\mathbb{F}}_p[x]$-algebra, by $h_{3j}$ and $k_{i,j}:= h_{1i}h_{2j}$ for all $i,j\in Z_3$, and by $L_1 := h_{11}h_{12}h_{13}$ and $L_2 := h_{21}h_{22}h_{23}$. 

The Kummer parameter of $h_{3j}$ is $1$, so the parallel transport isomorphism of the fiber $A_1^T$ at $1$ with the fiber $(\core(A^T))_0$ at $0$ sends $h_{3j}\in A_1^T$ to $x^1h_{3j}\in \Lambda_3^{\bullet}$ in the core. Similarly, the Kummer parameters of $h_{1i}$ and $h_{2j}$ sum to $\frac{1}{3} + \frac{2}{3} = 1$, so parallel transport sends $k_{i,j}\in A_1^T$ to $x^1k_{i,j}\in \core(A^T)\subseteq \Lambda_3^{\bullet}$. For the same reason, $L_1\in A_1^T$ is sent by parallel transport to $xL_1\in \Lambda_3^{\bullet}$ in the core. Finally, the sum of the Kummer parameters of $L_2$ is $\frac{2}{3} + \frac{2}{3} + \frac{2}{3} = 2$, so parallel transport sends $L_2\in A_1^T$ to $x^2L_2\in \Lambda_3^{\bullet}$ in the core. By construction, the fiber of the core at zero is isomorphic via parallel transport to $A_1^T$, so the core of $A^T$ must be the differential graded $\overline{\mathbb{F}}_p$-subalgebra of $\Lambda_3^{\bullet}$ generated by $xh_{3j}$ and $xk_{i,j}$ for all $i,j\in Z_3$, and by $xL_1$ and $x^2L_2$. 

A presentation for $A^T$, as a $\sigma$-equivariant differential graded $\overline{\mathbb{F}}_p[x]$-algebra, is as follows:
\begin{align*} 
 A^T &= \Lambda_{\overline{\mathbb{F}}_p[x]}\left( h_{31},h_{32},h_{33},L_1,L_2\right)\otimes_{\overline{\mathbb{F}}_p} \overline{\mathbb{F}}_p\left[k_{i,j}: i,j\in Z_3\right] \\
 \mbox{modulo\ relations:} & \left( k_{i,j}k_{i^{\prime},j^{\prime}} + k_{i^{\prime},j}k_{i,j^{\prime}}, k_{i,j}k_{i,j^{\prime}}, k_{i^{\prime},j}k_{i,j}, k_{1,1}k_{2,2}k_{3,3} + L_1L_2\right) ,\\
 d(h_{3i}) &= k_{i,i+1} - k_{i-1,i} ,\\
 d(k_{i,i}) &= x(2h_{3,i} - h_{3,i+1} - h_{3,i+2})k_{1,i} ,\\
 d(k_{i,i+1}) &= -L_1 - xL_2  ,\\
 d(k_{i,i+2}) &= -x(2h_{3,i+1} - h_{3,i+2} - h_{3,i})k_{i,i+2} ,\\
 d(L_1) &= x\left(k_{12}k_{23} + k_{23}k_{31} + k_{31}k_{12}\right) ,\\
 d(L_2) &= -\left(k_{12}k_{23} + k_{23}k_{31} + k_{31}k_{12}\right),\\
 \sigma(h_{3,i}) &= h_{3,i+1} ,\\
 \sigma(k_{i,j}) &= k_{i+1,j+1},\\
 \sigma(L_1) &= L_1,\\
 \sigma(L_2) &= L_2.
\end{align*}

Projection into the core, using this connection, is given
\begin{align*}
\tilde{h}_{3j} &= x^{-1} h_{3j},\\ 
\tilde{k}_{j} &= x^{-1} k_j,\\
\tilde{L}_1 &= x^{-1} L_1,\mbox{\ and} \\
\tilde{L}_2 &= x^{-2} L_2.\end{align*}
Consequently, a presentation for the core of $A^T$ is:
\begin{align*} 
 \core(A^T) &= \Lambda_{\overline{\mathbb{F}}_p[x]}\left( \tilde{h}_{31},\tilde{h}_{32},\tilde{h}_{33},\tilde{L}_1,\tilde{L}_2\right)\otimes_{\overline{\mathbb{F}}_p} \overline{\mathbb{F}}_p\left[\tilde{k}_{i,j}: i,j\in Z_3\right] \\
 \mbox{modulo\ relations:} & \left( \tilde{k}_{i,j}\tilde{k}_{i^{\prime},j^{\prime}} + \tilde{k}_{i^{\prime},j}\tilde{k}_{i,j^{\prime}}, \tilde{k}_{i,j}\tilde{k}_{i,j^{\prime}}, \tilde{k}_{i^{\prime},j}\tilde{k}_{i,j}, \tilde{k}_{1,1}\tilde{k}_{2,2}\tilde{k}_{3,3} + \tilde{L}_1\tilde{L}_2\right) ,\\
 d(\tilde{h}_{3i}) &= \tilde{k}_{i,i+1} - \tilde{k}_{i-1,i} ,\\
 d(\tilde{k}_{i,i}) &= (2\tilde{h}_{3,i} - \tilde{h}_{3,i+1} - \tilde{h}_{3,i+2})\tilde{k}_{1,i} ,\\
 d(\tilde{k}_{i,i+1}) &= -\tilde{L}_1 - \tilde{L}_2  ,\\
 d(\tilde{k}_{i,i+2}) &= -(2\tilde{h}_{3,i+1} - \tilde{h}_{3,i+2} - \tilde{h}_{3,i})\tilde{k}_{i,i+2} ,\\
 d(\tilde{L}_1) &= \tilde{k}_{12}\tilde{k}_{23} + \tilde{k}_{23}\tilde{k}_{31} + \tilde{k}_{31}\tilde{k}_{12} ,\\
 d(\tilde{L}_2) &= -\left(\tilde{k}_{12}\tilde{k}_{23} + \tilde{k}_{23}\tilde{k}_{31} + \tilde{k}_{31}\tilde{k}_{12}\right).
\end{align*}

One sees by inspection of \eqref{d kappa}, \eqref{d L1}, and \eqref{d L2} that the differentials in $\core(A^T)$ are all homogeneous of degree zero with respect to the filtration by powers of $x$. That is, $A^T$ is ``core-homogeneous'' in the sense of Definition \ref{def of core-homogeneity}. Consequently the core monodromy spectral sequence collapses with no nonzero differentials at $E_1$. For the most part, these observations are simply special cases of the general results in Theorems \ref{main local inv cycles thm} and \ref{ss collapse thm 304}. There is a noteworthy curiosity here at height $n=3$: we did not need to make the assumption $p>2n^2 = 18$ which appears in the statement of Theorem \ref{ss collapse thm 304}. Indeed, the only need for the bound $p>2n^2$ in Theorem \ref{ss collapse thm 304} was in order to use Proposition \ref{critical complex is a subset of first-subscript complex} in order to establish that the first-subscript complex contains the critical complex. As pointed out in Remark \ref{remark on prime bound}, the bound $p>2n^2$ is not sharp, and in fact, in the present case ($n=3$), the first-subscript complex contains the critical complex at {\em all} primes.

We now consider the inclusion of $\core(A^T)$ into the critical complex $cc^{\bullet}_3$ of $\Lambda_3$. The core is a sub-DGA of $cc^{\bullet}_3$, and the inclusion $\core(A^T)\rightarrow cc^{\bullet}_3$ preserves the filtration by powers of $x$. Consequently we have the map of spectral sequences from the core monodromy spectral sequence of $A^T$ to the spectral sequence 
\begin{align}
\label{ss 40392} E_1^{s,t} \cong H^s(cc_{n,\epsilon}^{\bullet}/x)[\overline{x}] &\Rightarrow H^s(cc_{n,\epsilon}^{\bullet}) 
\end{align}
of the filtration of the critical complex by powers of $x$.

At a glance, one expects that \eqref{ss 40392} could very well have nonzero differentials, because the formula \eqref{d L1} for the differential on $L_1$ in $cc_{n,\epsilon}^{\bullet}$ raises the $x$-adic valuation by $1$. 
However, the target of that differential would be $\kappa_1\kappa_2 + \kappa_2\kappa_3 + \kappa_3\kappa_1$, which does not survive to the $E_1$-page, since by \eqref{d L2}, $\kappa_1\kappa_2 + \kappa_2\kappa_3 + \kappa_3\kappa_1$ is already a coboundary in $cc_{n,\epsilon}^{\bullet}/x$. (One might say that $\kappa_1\kappa_2 + \kappa_2\kappa_3 + \kappa_3\kappa_1$ ``was already hit by a $d_0$-differential,'' since it is consistent with the usual conventions of spectral sequences of filtered cochain complexes to refer to the differential in $cc_{n,\epsilon}^{\bullet}/x$ as ``the $d_0$-differential.'') So one seems to have gotten lucky.

Happily, Theorem \ref{ss collapse thm 304} tells us that it is not mere luck, but in fact this apparently-nontrivial collapse of the spectral sequence happens at all heights $n$. 
\end{example}

\appendix
\section{Review and background on Morava stabilizer groups}
\label{Review of Morava stabilizer groups}
There are many things called Morava stabilizer groups---strict, full, extended, and their group scheme variants---and their cohomology, with coefficients taken in various coefficient modules, serve as input for various spectral sequences used to calculate stable homotopy groups. Even for readers who are experts in one perspective on the cohomology of Morava stabilizer groups, it can be nontrivial to make sense of statements formulated from some other perspective, e.g. to compare a statement about $\Cotor_{BP_*BP}^{*,*}\left(BP_*, v_2^{-1} BP_*/(p,v_1)\right)$ from chapters 5 and 6 of \cite{MR860042} to the cohomology of the height $2$ extended Morava stabilizer group with appropriate coefficients. We provide this appendix to review:
\begin{itemize}
\item the basics of Morava stabilizer groups
\item how their cohomology is used in computational stable homotopy theory,
\item and how the main theorem of this paper is expressed using several different, commonly used ways of talking about the cohomology of the Morava stabilizer groups.
\end{itemize}

The story told in this appendix is old, and it is well-told in several places, e.g. \cite{MR1333942}.
\begin{itemize}
\item Fix a prime $p$, and let $\mathbb{G}_{1/n}$ be the height $n$ commutative one-dimensional\footnote{From now on, all formal group laws in this paper will be assumed to be commutative and one-dimensional.} formal group law over $\mathbb{F}_{p}$ classified by the ring map $BP_*\rightarrow \mathbb{F}_p$ sending the Hazewinkel generator $v_n$ to $1$, and sending $v_i$ to zero for all $i\neq n$. Here $BP_*$ is the classifying ring of $p$-typical formal group laws, whose universal formal group law was proved by Quillen \cite{MR0253350} to be precisely the formal group law arising from the complex orientation on $p$-primary Brown-Peterson homology.
\item The automorphism group scheme $\Aut(\mathbb{G}_{1/n})$ is called the {\em full Morava stabilizer group scheme}. It is pro-\'{e}tale, and becomes constant after base change to $\mathbb{F}_{p^n}$. That is, if we write $\mathbb{G}_{1/n}\otimes_{\mathbb{F}_p}\mathbb{F}_{p^n}$ for the same formal group law as $\mathbb{G}_{1/n}$ but with its coefficients regarded as living in $\mathbb{F}_{p^n}$ rather than $\mathbb{F}_p$, then the automorphism group scheme $\Aut(\mathbb{G}_{1/n}\otimes_{\mathbb{F}_p}\mathbb{F}_{p^n})$ is simply a profinite {\em group}. It is called the {\em full Morava stabilizer group of height $n$.} This profinite group acts continuously on the Lubin-Tate space $\Def(\mathbb{G}_{1/n}\otimes_{\mathbb{F}_p}\mathbb{F}_{p^n})$ of deformations of $\mathbb{G}_{1/n}\otimes_{\mathbb{F}_p}\mathbb{F}_{p^n}$. 
\item
Since we have base-changed from $\mathbb{F}_p$ to $\mathbb{F}_{p^n}$, the Galois group $\Gal(\mathbb{F}_{p^n}/\mathbb{F}_p)$ also acts on $\Def(\mathbb{G}_{1/n}\otimes_{\mathbb{F}_p}\mathbb{F}_{p^n})$. The two group actions assemble into an action of the semidirect product $\Aut(\mathbb{G}_{1/n}\otimes_{\mathbb{F}_p}\mathbb{F}_{p^n})\rtimes \Gal(\mathbb{F}_{p^n}/\mathbb{F}_p)$ on $\Def(\mathbb{G}_{1/n}\otimes_{\mathbb{F}_p}\mathbb{F}_{p^n})$. This semidirect product is called the {\em extended Morava stabilizer group of height $n$}, and written $\extendedGn$.

In fact a little bit more is true: one can define a moduli space $\Def(\mathbb{G}_{1/n}\otimes_{\mathbb{F}_p}\mathbb{F}_{p^n})_*$ of ``deformations of $\mathbb{G}_{1/n}\otimes_{\mathbb{F}_p}\mathbb{F}_{p^n}$ equipped with a $1$-form,'' in the sense of \cite{MR1333942}. The extended Morava stabilizer group of height $n$ also acts continuously on $\Def(\mathbb{G}_{1/n}\otimes_{\mathbb{F}_p}\mathbb{F}_{p^n})_*$.
\item 
Let the extended Morava stabilizer group $\extendedGn$ act on $\mathbb{F}_{p^n}$ by letting the Galois group $\Gal(\mathbb{F}_{p^n}/\mathbb{F}_p)$ acts in its canonical way, while the full Morava stabilizer group $\Aut(\mathbb{G}_{1/n}\otimes_{\mathbb{F}_p}\mathbb{F}_{p^n})\subseteq \extendedGn$ acts trivially on the $\Gal(\mathbb{F}_{p^n}/\mathbb{F}_p)$-fixed points of $\mathbb{F}_{p^n}$. If $p\nmid n$, then the group cohomology of the extended Morava stabilizer group $H^*(\extendedGn;\mathbb{F}_{p^n})$, with this particular action of $\extendedGn$ on $\mathbb{F}_{p^n}$, agrees with the cohomology of the full Morava stabilizer group {\em scheme} $H^*(\Aut(\mathbb{G}_{1/n});(\mathbb{F}_p)_{triv})$. 

The point is that the reader who prefers the simplicity of thinking of a trivial action can think of this as a paper about $H^*(\Aut(\mathbb{G}_{1/n});(\mathbb{F}_p)_{triv})$, while the reader who dislikes group schemes and wants to think about honest {\em groups} can instead think of this as a paper about $H^*(\extendedGn;\mathbb{F}_{p^n})$. The two cohomology rings are isomorphic, both arise in the input of spectral sequences used to calculate stable homotopy groups (as explained below), and this paper calculates both cohomology rings for $p>>n$.
\item The deformation space $\Def(\mathbb{G}_{1/n}\otimes_{\mathbb{F}_p}\mathbb{F}_{p^n})_*$ is an affine formal scheme, and comes equipped with an action of the multiplicative group scheme $\mathbb{G}_m$. This is another way of saying that $\Def(\mathbb{G}_{1/n}\otimes_{\mathbb{F}_p}\mathbb{F}_{p^n})_*$ is simply the formal spectrum, $\Spf$, of some adic commutative graded ring $\Gamma(\Def(\mathbb{G}_{1/n}\otimes_{\mathbb{F}_p}\mathbb{F}_{p^n})_*)$, in the sense of \cite[sections 0.7 and I.10]{MR0163908}. This graded ring is quite simple \cite{MR0238854}: it is abstractly isomorphic to \[ W(\mathbb{F}_{p^n})[[u_1, \dots ,u_{n-1}]][u^{\pm 1}],\] with $u$ in degree $2$, and with each $u_i$ in degree $0$. Here $W(\mathbb{F}_{p^n})$ is the Witt ring of $\mathbb{F}_{p^n}$; its simplest description is that it is the ring of $p$-adic integers with a primitive $(p^n-1)$st root of unity adjoined.

Morava \cite{MR782555} constructed a complex-oriented generalized homology theory, {\em Morava $E$-theory}, with the property that its coefficient ring $E(\mathbb{G}_{1/n}\otimes_{\mathbb{F}_p}\mathbb{F}_{p^n})_*$ is isomorphic to the graded ring $\Gamma(\Def(\mathbb{G}_{1/n}\otimes_{\mathbb{F}_p}\mathbb{F}_{p^n})_*)$, and furthermore the formal group law on $E(\mathbb{G}_{1/n}\otimes_{\mathbb{F}_p}\mathbb{F}_{p^n})_*$ arising from the complex orientation coincides with the universal formal group law on $\Gamma(\Def(\mathbb{G}_{1/n}\otimes_{\mathbb{F}_p}\mathbb{F}_{p^n})_*)$. Goerss--Hopkins \cite{MR2125040} constructed an $E_{\infty}$-ring structure on the representing spectrum of Morava $E$-theory, and showed that the extended Morava stabilizer group $\extendedGn$ acts on that spectrum by $E_{\infty}$-ring automorphisms.
\item Devinatz--Hopkins \cite{MR2030586} constructed, for each finite spectrum $X$, a spectral sequence 
\begin{align}
\label{dhss} E_2^{s,t} \cong H^s\left( \extendedGn; E(\mathbb{G}_{1/n}\otimes_{\mathbb{F}_p}\mathbb{F}_{p^n})_t(X)\right) 
 &\Rightarrow \pi_{t-s}(L_{K(n)}X),
\end{align}
where $L_{K(n)}X$ is the Bousfield localization of $X$ at the Morava $K$-theory $K(n)_*$. For each $n$, there is a homotopy pullback square
\[\xymatrix{
 L_{E(n)}X \ar[r] \ar[d] & L_{K(n)}X \ar[d] \\
 L_{E(n-1)}X\ar[r] & L_{E(n-1)}L_{K(n)}X
}\]
hence a long exact sequence in homotopy groups 
\begin{equation}\label{fracture les}\xymatrix{
 \dots \ar[r] & 
  \pi_1(L_{K(n)}X)\oplus \pi_1(L_{E(n-1)}X) \ar[r] &
  \pi_1(L_{E(n-1)}L_{K(n)}X) \ar`r_l[ll] `l[dll] [dll] \\
 \pi_0(L_{E(n)}X) \ar[r] &
  \pi_0(L_{K(n)}X)\oplus \pi_0(L_{E(n-1)}X) \ar[r] &
  \pi_0(L_{E(n-1)}L_{K(n)}X) \ar`r_l[ll] `l[dll] [dll] \\
 \pi_{-1}(L_{E(n)}X) \ar[r] &
  \pi_{-1}(L_{K(n)}X)\oplus \pi_{-1}(L_{E(n-1)}X) \ar[r] &
  \dots ,}\end{equation}
where $L_{E(n)}X$ is the Bousfield localization of $X$ at the $n$th Johnson-Wilson homology theory $E(n)$. 

Since $\pi_*(L_{E(0)}X)$ is simply the rational homology $H_*(X;\mathbb{Q})$ of $X$, one can in principle carry out an induction, running the spectral sequence \eqref{dhss} for higher and higher values of $n$, and each time, working out the behavior of the long exact sequence \eqref{fracture les} in order to deduce the homotopy groups of $L_{E(n)}X$ for higher and higher values of $n$. By the chromatic convergence theorem \cite{MR1192553}, the homotopy groups of the homotopy limit $\holim_n L_{E(n)}X$ are precisely the $p$-local homotopy groups of $X$ itself. The moral is that one has a method---in principle!---for passing from the profinite group cohomology \begin{equation}\label{coh 120342}  H^*\left( \extendedGn; E(\mathbb{G}_{1/n}\otimes_{\mathbb{F}_p}\mathbb{F}_{p^n})_*(X)\right),\end{equation} for each value of $n$, to the stable homotopy groups of $X$. This is of serious interest, since calculating the stable homotopy groups of finite spectra (e.g. the sphere spectrum $S^0$) is the most fundamental concern of computational stable homotopy theory.
\item One typically calculates \eqref{coh 120342} by first calculating the cohomology of $\extendedGn$ with small coefficients, then running Bockstein spectral sequences to build up inductively to more sophisticated coefficients, eventually arriving at the $E_2$-page of \eqref{coh 120342}. The coefficient module 
\begin{equation}\label{iso n439} E(\mathbb{G}_{1/n}\otimes_{\mathbb{F}_p}\mathbb{F}_{p^n})_*/(p,u_1, \dots ,u_{n-1})\cong \mathbb{F}_{p^n}[u^{\pm 1}]\end{equation}
is the usual starting point. If the $p$-primary Smith-Toda complex $V(n-1)$ exists, then \eqref{iso n439} is its $E(\mathbb{G}_{1/n}\otimes_{\mathbb{F}_p}\mathbb{F}_{p^n})_*$-homology. Even if the $p$-primary $V(n-1)$ does not exist, the cohomology $H^*\left(\extendedGn; \mathbb{F}_{p^n}[u^{\pm 1}]\right)$ provides the input for a Bockstein spectral sequence which converges to 
$H^*\left(\extendedGn; \mathbb{F}_{p^n}[[u_{n-1}]][u^{\pm 1}]\right)$, which is in turn is the input for a Bockstein spectral sequence which converges to $H^*\left(\extendedGn; \mathbb{F}_{p^n}[[u_{n-2},u_{n-1}]][u^{\pm 1}]\right)$, and so on; after $n$ such Bockstein spectral sequences, one arrives at
\begin{align*} H^*\left(\extendedGn; E(\mathbb{G}_{1/n}\otimes_{\mathbb{F}_p}\mathbb{F}_{p^n})_*\right),\end{align*}
the input for the spectral sequence \eqref{dhss} in the most valuable case $X=S^0$, i.e., the case that yields stable homotopy groups of the $K(n)$-localization of the sphere spectrum.

Both the quotient $\Gal(\mathbb{F}_{p^n}/\mathbb{F}_p)$ of $\extendedGn$, and the quotient $\mathbb{F}_{p^n}^{\times}$ of $\Aut(\mathbb{G}_{1/n}\otimes_{\mathbb{F}_p}\mathbb{F}_{p^n})$, act on the coefficients in $\mathbb{F}_{p^n}[u^{\pm 1}]$. The latter action on $\mathbb{F}_{p^n}\{ u^j\}$, the $\mathbb{F}_{p^n}$-linear span of the monomial $u^j$, depends on the exponent $j$. If $j$ is a multiple of $p^n-1$, then the action of $\extendedGn$ on $\mathbb{F}_{p^n}\{ u^j\}$ coincides with the action of $\extendedGn$ on $\mathbb{F}_{p^n}$ described above, three bullet-points ago. Consequently, another way to describe what we calculate in this paper is that it is the bigraded subring $H^*\left(\extendedGn; \mathbb{F}_{p^n}[u^{\pm (p^n-1)}]\right) = H^*\left(\extendedGn; \mathbb{F}_{p^n}[v_n^{\pm 1}]\right)$ of $H^*\left(\extendedGn; \mathbb{F}_{p^n}[u^{\pm 1}]\right)$.

{\em A priori}, it sounds as though $H^*\left(\extendedGn; \mathbb{F}_{p^n}[u^{\pm (p^n-1)}]\right)$ ought to be a small part, namely ``$100/(p^n-1)$ percent,'' of $H^*\left(\extendedGn; \mathbb{F}_{p^n}[u^{\pm 1}]\right)$. Following Corollary \ref{main cor 0239}, we explain a precise sense in which $H^*\left(\extendedGn; \mathbb{F}_{p^n}[u^{\pm (p^n-1)}]\right)$ turns out to instead be a much larger part, namely ``$100/n$ percent,'' of $H^*\left(\extendedGn; \mathbb{F}_{p^n}[u^{\pm 1}]\right)$.
\item
The cohomology of the profinite group $\extendedGn$ with coefficients in \eqref{iso n439} is also the input for a {\em different} sequence of $n$ Bockstein spectral sequences \cite{MR0458423},\cite{MR860042}, which are roughly ``Cohen--Macaulay local duals'' of those described in the previous paragraph, and which eventually arrive at \linebreak $\Cotor_{BP_*BP}^{*,*}(BP_*,v_n^{-1}BP_*/I_n^{\infty})$, the height $n$ layer in the $E_1$-term of the chromatic spectral sequence \cite[chapter 5]{MR860042}. The chromatic spectral sequence then converges to the flat cohomology 
\begin{align*} H^*_{fl}(\mathcal{M}_{fg};\omega^{\otimes *}) &\cong \Cotor_{BP_*BP}^{*,*}(BP_*,BP_*) \end{align*}
of the moduli stack of formal groups over $\Spec \mathbb{Z}_{(p)}$, which is the $E_2$-term of the Adams-Novikov spectral sequence converging to the $p$-local stable homotopy groups of spheres. 
\item
The profinite group $\Aut(\mathbb{G}_{1/n}\otimes_{\mathbb{F}_p}\mathbb{F}_{p^n})$ has a pro-$p$-Sylow subgroup given by the {\em strict Morava stabilizer group of height $n$}, $\strict\Aut(\mathbb{G}_{1/n}\otimes_{\mathbb{F}_p}\mathbb{F}_{p^n})$, the group of strict automorphisms of $\mathbb{G}_{1/n}\otimes_{\mathbb{F}_p}\mathbb{F}_{p^n}$. 
Recall that an automorphism $\phi$ of a formal group law $\mathbb{G}$ over a ring $R$ is a power series $\phi(X)\in R[[X]]$ such that $\mathbb{G}(\phi(X),\phi(Y)) = \phi(\mathbb{G}(X,Y))$. The automorphism is said to be {\em strict} if $\phi(X) \equiv X$ modulo $X^2$. We have a short exact sequence 
\begin{equation}\label{ses fj3049} 1 \rightarrow \strict\Aut(\mathbb{G}_{1/n}\otimes_{\mathbb{F}_p}\mathbb{F}_{p^n}) \rightarrow \Aut(\mathbb{G}_{1/n}\otimes_{\mathbb{F}_p}\mathbb{F}_{p^n}) \stackrel{\pi}{\longrightarrow} \mathbb{F}_{p^n}^{\times} \rightarrow 1.\end{equation}

The module \eqref{iso n439} is graded, with $u$ in degree $2$; we call this grading the {\em internal} grading. The action of $\Aut(\mathbb{G}_{1/n}\otimes_{\mathbb{F}_p}\mathbb{F}_{p^n})$ on the degree $2i$ summand $\mathbb{F}_{p^n}\cdot u^i$ of \eqref{iso n439} is as follows: an element $\sigma \in \Aut(\mathbb{G}_{1/n}\otimes_{\mathbb{F}_p}\mathbb{F}_{p^n})$ sends $u^i$ to the product $\pi(\sigma)^{-i}\cdot u^i\in \mathbb{F}_{p^n}\cdot u^i$, where $\pi$ is the projection  map in the exact sequence \eqref{ses fj3049}.

Consequently, for each integer $i$, $\Aut(\mathbb{G}_{1/n}\otimes_{\mathbb{F}_p}\mathbb{F}_{p^n})$ acts on $\mathbb{F}_{p^n}\cdot u^i$ in the same way that it acts on $\Aut(\mathbb{G}_{1/n}\otimes_{\mathbb{F}_p}\mathbb{F}_{p^n})\cdot u^{i + (p^n-1)}$. Moreover $\Aut(\mathbb{G}_{1/n}\otimes_{\mathbb{F}_p}\mathbb{F}_{p^n})$ acts {\em trivially} on $\mathbb{F}_{p^n}\cdot u^0$. 

Write $v_n$ for the $(p^n-1)$th power of $u$. Then, considering the internal grading, the cohomology ring \eqref{coh ring 1} is a $\mathbb{Z}/2(p^n-1)\mathbb{Z}$-graded algebra. Its internal degree $0$ subring is precisely 
\begin{align}\nonumber
H^*\left(\extendedGn; \mathbb{F}_{p^n}[v_n^{\pm 1}]\right)  &\cong 
H^*\left(\extendedGn; (\mathbb{F}_{p^n})_{triv}\right)\otimes_{\mathbb{F}_p} \mathbb{F}_p[v_n^{\pm 1}] \\
\label{iso fjgh4308}  &\cong H^*\left(\Aut(\mathbb{G}_{1/n}); (\mathbb{F}_{p})_{triv}\right)\otimes_{\mathbb{F}_p} \mathbb{F}_p[v_n^{\pm 1}] ,\end{align}
where \eqref{iso fjgh4308} is to be understood as cohomology of the automorphism group {\em scheme}. 
\end{itemize}


\end{document}